\newcommand{\Rb}{\mathbb{R}}
\newcommand{\Ob}{\mathbb{O}}
\newcommand{\M}{\mathbb{M}}
\newcommand{\abar}{\bar{a}}
\newcommand{\bbar}{\bar{b}}
\newcommand{\cbar}{\bar{c}}
\newcommand{\ebar}{\bar{e}}
\newcommand{\fbar}{\bar{f}}
\newcommand{\gbar}{\bar{g}}
\renewcommand{\hbar}{\bar{h}}
\newcommand{\xbar}{\bar{x}}
\newcommand{\ybar}{\bar{y}}
\newcommand{\zbar}{\bar{z}}
\DeclareMathOperator{\res}{\upharpoonright}
\DeclareMathOperator{\Th}{Th}
\newcommand{\cx}{\heartsuit}
\newtheorem{thm}{Theorem}[section]
\newtheorem{prop}[thm]{Proposition}
\newtheorem{lem}[thm]{Lemma}
\newtheorem{cor}[thm]{Corollary}
\newtheorem{fact}[thm]{Fact}
\newtheorem{quest}[thm]{Question}
\theoremstyle{definition}
\newtheorem{defn}[thm]{Definition}
\newtheorem{nota}[thm]{Notation}
\theoremstyle{remark}
\newcommand{\e}{\varepsilon}
\DeclareMathOperator{\tp}{tp}
\newcommand{\oo}{$1\text{\upshape-}1$}
\newcommand{\RFR}{\mathbb{R}\mathsf{FR}}
\newcommand{\Lc}{\mathcal{L}}
\newcommand{\Uc}{\mathcal{U}}
\newcommand{\Fraisse}{Fra\"\i ss\'e}
\newcommand{\mxmn}[3]{[#1]_{#2}^{#3}}
\newcommand{\mxmnu}[1]{[#1]_{0}^{1}}
\DeclareMathOperator{\dcl}{dcl}
\DeclareMathOperator{\acl}{acl} %
\DeclareMathOperator{\Aut}{Aut}
\def\Ind{\setbox0=\hbox{$x$}\kern\wd0\hbox to 0pt{\hss$\mid$\hss}
  \lower.9\ht0\hbox to 0pt{\hss$\smile$\hss}\kern\wd0}
\def\Notind{\setbox0=\hbox{$x$}\kern\wd0\hbox to 0pt{\mathchardef
    \nn=12854\hss$\nn$\kern1.4\wd0\hss}\hbox to
  0pt{\hss$\mid$\hss}\lower.9\ht0 \hbox to 0pt{\hss$\smile$\hss}\kern\wd0}
\def\ind{\mathop{\mathpalette\Ind{}}}
\newcommand{\indast}{{\textstyle\ind^{\!\!\ast}}}
\newcommand{\fpc}{\mathrm{4PC}}
\newcommand{\RF}{\Rb\mathsf{F}}
\newcommand{\RFs}{\Rb\mathsf{F}^\ast}
\DeclareMathOperator{\Mod}{Mod}
\newcommand{\interp}[3]{#1{:}#2{:}#3}%
\DeclareMathOperator{\dis}{dis}
\DeclareMathOperator{\corr}{cor}
\newcommand{\rKRFR}{\rho_K}%
\newcommand{\dc}{\#^{\mathrm{dc}}}
\DeclareMathOperator{\cf}{cf}
\newcommand{\To}{\Rightarrow}
\begin{document}

\title{A simple continuous theory}
\address{Department of Mathematics\\
  University of Maryland\\
  College Park, MD 20742, USA}
\author{James Hanson}
\email{jhanson9@umd.edu}
\date{\today}

\keywords{continuous logic, simplicity, non-discreteness notions}
\subjclass[2020]{03C66, 03C45}

\begin{abstract}
  In the context of continuous first-order logic, special attention is often given to theories that are somehow continuous in an `essential' way. A common feature of such theories is that they do not interpret any infinite discrete structures. We investigate a stronger condition that is easier to establish and use it to give an example of a strictly simple continuous theory that does not interpret any infinite discrete structures: the theory of richly branching $\Rb$-forests with generic binary predicates. We also give an example of a superstable theory that fails to satisfy this stronger condition but nevertheless does not interpret any infinite discrete structures.
\end{abstract}

\maketitle

\section*{Introduction}
\label{sec:intro}

In continuous first-order model theory, there is a common desire to find examples of various phenomena occurring in `essentially continuous' theories, where `essential continuity' is the informally defined property of resembling a discrete theory as little as possible. Most of the standard motivating examples of metric structures---infinite-dimensional Hilbert spaces, atomless probability algebras, operator algebras, Banach lattices, the Urysohn sphere, and $\Rb$-trees---are regarded as essentially continuous \cite{Ealy2012}. Despite not settling on a precise definition of essential continuity, continuous logicians have been asking the following question for a long time.
\begin{quest}\label{quest:ess-cont-simple}
  Is there an essentially continuous strictly simple theory?
\end{quest}
As discussed in the introduction of \cite{Ealy2012}, natural attempts to produce an example fail: The randomization of an IP theory is TP$_2$ \cite[Cor.~4.13]{Yaacov2013}, implying that no randomization is strictly simple, and the theories produced by adding generic automorphisms to the standard examples of essentially continuous stable theories are themselves stable (whereas in discrete logic this often produces a strictly simple theory).  

Obviously we would like to address \cref{quest:ess-cont-simple} in this paper. The issue of course is that \ref{quest:ess-cont-simple} is not fully specified. In all honesty, it is unclear that there will ever be a definitively compelling notion of essential continuity, and as such we will not attempt to provide one here. %

In earlier work of the author studying uncountable (or inseparable) categoricity in continuous logic, the condition of not interpreting the theory of any infinite discrete structure (which by an abuse of terminology we will refer to as not `interpreting an infinite discrete structure') was used as a precisely defined proxy for essential continuity \cite{CatCon}. In that context, this was a rather natural choice, as any $\omega$-stable theory that interprets an infinite discrete structure has a strongly minimal imaginary sort,\footnote{Although note that the converse does not hold.} allowing for a Baldwin-Lachlan characterization of uncountable categoricity for those theories in particular. Outside of the context of $\omega$-stable theories (or, more generally, outside of the context of dictionaric theories), this is perhaps too strong. The difficulty is that interpretation of an infinite discrete structure relies on the ability to isolate the relevant discrete behavior in a definable set and, as is well known at this point, definable sets are generally quite poorly behaved in continuous logic. As we will show in \cref{sec:separating}, this can in fact be a fatal obstruction to interpreting an infinite discrete structure in a theory that is seemingly rather discrete in nature. Given this issue, in this paper we study a stronger precisely defined proxy for essential continuity, which is roughly speaking equivalent to having no uniformly discrete `hyperimaginary sorts.' This notion seems to be more robust; in particular, it is always witnessed by $1$-types (\cref{thm:crisp-imaginary-char-1-types}). Furthermore, as evidenced by the relative complexity of the proofs of \cref{prop:heart-no-interpret} and \cref{thm:no-crispy}, it seems to be generally easier to work with. Finally, we show that, under the assumption of dictionaricity (and so a fortiori $\omega$-stability), it is equivalent to the condition of interpreting an infinite discrete structure (\cref{prop:dict-then-interp}). %

In \cref{sec:gen-bin-R-tree}, we present an example of a strictly simple theory that exhibits this stronger non-discreteness property (and therefore also does not interpret any infinite discrete structure). The idea is itself fairly simple---add a generically chosen binary predicate to a generic `$\Rb$-forest.' Aside from verifying that the example works, the only real issue is choosing the modulus of uniform continuity of the predicate correctly so that the resulting class of structures has the amalgamation property. It turns out that $1$-Lipschitzness with regards to the $\ell^1$-metric on pairs (i.e., $d(xy,zw) = d(x,z)+d(y,w)$) is precisely what is necessary to make this work.

Finally, in \cref{sec:quests}, we describe a stronger non-discreteness property that separates Hilbert spaces, atomless probability algebras, and the Urysohn sphere from $\Rb$-trees and ask whether it is incompatible with strict simplicity.

\subsection*{Notation}
\label{sec:nota}

\begin{sloppypar}
  In continuous logic, often the minimum and maximum function are used to clamp a given value to a certain interval, such as in the expression $\max(0,\min(1,x))$.  In the author's opinion representing such a frequently used operation with 12 characters is a bit ridiculous, so we will employ the following notation.
\end{sloppypar}
\begin{nota}
  For any $r \leq s$, we write $\mxmn{x}{r}{s}$ for the function $\max(r,\min(s,x))$.
\end{nota}

\section{Crisp imaginary types}%
\label{sec:Twnacit}

\begin{defn}\label{defn:crispy}
  In a theory $T$, a type $p(x)$ is \emph{crisp} if the metric is $\{0,1\}$-valued on the set of realizations of $p(x)$ in the monster model.
\end{defn}

While it might be natural to use the word `discrete' for the previous notion, there is an issue with this, which is that discreteness in metric spaces is a topological condition and is strictly weaker than uniform discreteness (even in arbitrarily saturated metric structures). `Uniform discreteness,' while accurate, is also long and still fails to specify the scale of uniform discreteness. (In particular, the class of infinite structures with uniformly discrete metrics is not elementary.) As such, we have opted to use the adjective `crisp,' which is commonly used in the real-valued logic literature to mean $\{0,1\}$-valuedness.

\begin{defn}
  An \emph{imaginary type} is a type in an imaginary sort.
\end{defn}

\subsection{Characterization}
\label{sec:character}

\begin{prop}\label{prop:crisp-type-char-1}
  A theory $T$ has a non-algebraic crisp imaginary type if and only if there is a type $p(x)$ in some real (possibly infinitary) product sort and a formula $E(x,y)$ such that on the set of realizations of $p(x)$, $E(x,y)$ is $\{0,1\}$-valued and defines an equivalence relation with infinitely many classes.
\end{prop}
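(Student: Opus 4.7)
The plan is to prove both directions by passing between imaginary types and formula-defined equivalence relations on real types in the natural way.

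For the forward direction, I would start with a non-algebraic crisp imaginary type $q(\xi)$ living in an imaginary sort $S$. In the standard continuous-logic framework, $S$ is realized as the quotient $\Omega / d$ of a (possibly type-definable) subset $\Omega$ of an infinitary product sort by a definable pseudometric $d$; write $\pi \colon \Omega \to S$ for the quotient. The idea is to lift $q$ to a type $p(x)$ in the ambient product sort by taking $p(x)$ to be the type of some $x \in \Omega$ with $\pi(x) \models q$. The pseudometric $d(x,y)$ is already $\{0,1\}$-valued on realizations of $p$ by crispness of $q$, its zero-set is the family of $\pi$-fibers (hence an equivalence relation), and there are infinitely many $\pi$-fibers meeting $p$ by non-algebraicity of $q$. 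The only subtlety is that $d$ is technically a definable predicate, not necessarily a single formula; I would handle this by choosing a formula $\phi$ uniformly within $1/3$ of $d$ and setting $E(x,y) = \mxmnu{3\phi(x,y) - 1}$, which still agrees with $d$ on realizations of $p$ and hence defines the same $\{0,1\}$-valued equivalence relation there.

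For the backward direction, suppose $p(x)$ and $E(x,y)$ are as in the statement. Let $\Omega$ be the (type-definable) set of realizations of $p$. On $\Omega$, the formula $E$ is, by hypothesis, a $\{0,1\}$-valued equivalence relation, so in particular a definable pseudometric on $\Omega$. Hence the quotient $\Omega/E$ is a legitimate imaginary sort; letting $\pi \colon \Omega \to \Omega/E$ denote the quotient, I would take $q(\xi)$ to be the pushforward type, i.e., the type of $\pi(x)$ for $x \models p$. Crispness of $q$ follows because the induced metric on $\Omega/E$ takes only the values $0$ and $1$ on the image of $\Omega$, and non-algebraicity of $q$ follows because $E$ has infinitely many classes on $\Omega$.

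The main obstacle is bookkeeping rather than anything substantive: one must verify that the paper's notion of imaginary sort admits the quotient $\Omega/E$ of a type-definable base by a definable pseudometric, rather than requiring a full product sort with a globally defined pseudometric. Under the standard Ben Yaacov--Usvyatsov framework this is automatic, but if a more restrictive convention were in force I would need to extend $E$ off of $\Omega$ in a definable way so that it remains a pseudometric and still separates infinitely many classes; a truncation combined with a definable approximation to ``distance to $\Omega$'' should suffice.
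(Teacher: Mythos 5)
Your forward direction is essentially the paper's argument (realizing the imaginary sort as a definable subset of a quotient $H^n/\rho$, pulling back a realization of $q$ to a type $p$ in $H^n$, and using $\rho$), and your extra step of replacing a definable-predicate pseudometric by a formula via $\mxmnu{3\phi - 1}$ is sound and slightly more careful than what the paper records.

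The backward direction, however, has a genuine gap precisely where you label it ``bookkeeping.'' The set $\Omega$ of realizations of $p$ is only type-definable, and $E$ is only known to be a pseudometric there; in the standard continuous-logic framework (Ben Yaacov--Usvyatsov included), imaginary sorts are quotients of \emph{full} product sorts by \emph{globally} defined definable pseudometrics (and definable subsets of these), so $\Omega/E$ is not automatically an imaginary sort---your claim that this is ``automatic'' is incorrect. Your proposed repair does not work either: when $\Omega$ is merely type-definable, the function ``distance to $\Omega$'' is in general not a definable predicate (this failure is exactly the difference between type-definable and definable sets in continuous logic), and the na\"\i ve extension $\inf_{a,b\in\Omega}\bigl(d(x,a)+E(a,b)+d(y,b)\bigr)$ is neither definable nor obviously a pseudometric, since $E$ need not be Lipschitz in $d$. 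The missing ingredient is substantive: one must pass to the type-definable equivalence relation $x\sim y \Leftrightarrow d(x,y)=0 \vee \bigl(p(x)\wedge p(y)\wedge E(x,y)=0\bigr)$ on the whole sort and invoke the nontrivial theorem that every type-definable equivalence relation is the zero-set of some globally definable pseudometric $\rho$ (the paper cites \cite[Thm.~2.20]{ben-yaacov_2005}, or \cite[Prop.~4.8]{conant2021separation}). Only then is $H^\omega/\rho$ a bona fide imaginary sort, and a compactness argument gives an $\e>0$ with $\rho\ge\e$ whenever $E=1$ on realizations of $p$, so that rescaling by $\min(\frac{1}{\e}\rho,1)$ makes the metric $\{0,1\}$-valued there.
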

\begin{proof}
  Let $q(z)$ be a non-algebraic crisp type in some imaginary sort $I$. By the infinitary pigeonhole principle, we may assume that $q(z)$ is a complete type over some set of parameters $A$. By \cite[Prop.\ 3.2.13]{HansonThesis}, there is a definable pseudo-metric $\rho(x,y)$ on $H^n$ (for some $n \leq \omega$ where $H$ is the home sort) and a definable subset $D \subseteq H^n/\rho$ such that $I$ and $D$ are isomorphic as sorts (with the metric on $I$ corresponding to the metric $\rho$ on $D$). Therefore we may assume that $q(z)$ is a type in $D$. Let $p(x)$ be the type of elements of $H^n$ that project to realizations of $q(z)$. We clearly now have that $\rho(x,y)$ is $\{0,1\}$-valued on the realizations of $p(x)$ and defines an equivalence relation with infinitely many classes.

  For the other direction, we have that $d(x,y) = 0 \vee (p(x) \wedge p(y) \wedge E(x,y)=0)$ is a type-definable equivalence relation. By \cite[Thm.~2.20]{ben-yaacov_2005},\footnote{See also \cite[Prop.~4.8]{conant2021separation} for an easier proof.} there is a definable pseudo-metric $\rho(x,y)$ with the property that $\rho(x,y) = 0$ if and only if $d(x,y) = 0 \vee (p(x) \wedge p(y) \wedge E(x,y)=0)$. Note that for realizations of $p(x)$, we have by compactness that there is some $\e > 0$ such that if $E(x,y) = 1$, then $\rho(x,y) \geq \e$. By replacing $\rho(x,y)$ with $\min(\frac{1}{\e}\rho(x,y), 1)$, we may assume that $\rho(x,y)$ is $\{0,1\}$-valued on realizations of $p(x)$. In particular, we have that $\rho(x,y) = E(x,y)$ for realizations of $p(x)$. Let $f: H^\omega \to H^\omega/\rho$ be the definable quotient map. Let $q(y)$ be the type of $f(x)$ for any $x$ realizing $p(x)$. By construction we have that $q(y)$ is a non-algebraic crisp imaginary type.
\end{proof}

\begin{lem}
  If $T$ has a non-algebraic crisp imaginary type, then there is a type $p(x)$ in a finitary product sort and a formula $E(x,y)$ satisfying the conclusion of \cref{prop:crisp-type-char-1}.
\end{lem}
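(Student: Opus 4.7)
The plan is to exploit the fact that any first-order formula involves only finitely many variables. Starting from the type $p(x)$ in the infinitary product sort $H^n$ ($n \leq \omega$) and the formula $E(x,y)$ provided by \cref{prop:crisp-type-char-1}, there is some finite set $I \subseteq n$ of indices such that $E(x,y)$ is really a formula $E'(x_I, y_I)$ depending only on the coordinates in $I$. I would then take $p_I(x_I)$ to be the restriction (projection) of $p$ to the coordinates in $I$, which is a complete type in the finitary product sort $H^I$, and verify that $p_I$ and $E'$ satisfy the conclusion of \cref{prop:crisp-type-char-1}.

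The key tool throughout is that every realization of $p_I$ extends (by saturation) to a realization of $p$, together with the identity $E(a,b) = E'(a_I, b_I)$ for any $a, b \models p$. That $E'$ is $\{0,1\}$-valued on realizations of $p_I$ is then immediate. Reflexivity and symmetry of the relation defined by $E'$ on realizations of $p_I$ transfer from $E$ in the obvious way. For transitivity, given $a_I, b_I, c_I \models p_I$ with $E'(a_I, b_I) = E'(b_I, c_I) = 0$, I would extend these to $a, b, c \models p$ with $E(a, b) = E(b, c) = 0$, conclude $E(a, c) = 0$ from transitivity of $E$ on realizations of $p$, and then read this off as $E'(a_I, c_I) = 0$.

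The main (and essentially only nontrivial) step is verifying that $E'$ still has infinitely many equivalence classes on realizations of $p_I$. For this I would observe that the projection $a \mapsto a_I$ induces a map $p/E \to p_I/E'$ which is surjective (every realization of $p_I$ lifts to one of $p$) and injective (if $E'(a_I, b_I) = 0$ then $E(a, b) = 0$, so the $E$-classes of $a$ and $b$ agree). Since the source is infinite by hypothesis, so is the target. No substantive obstacle is expected; the argument is essentially a bookkeeping reduction to finitely many coordinates, enabled by the finite arity of formulas in continuous first-order logic.
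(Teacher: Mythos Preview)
Your approach is essentially the same as the paper's: restrict $E$ to finitely many coordinates and push down to the projected type. The verification you outline (values, equivalence-relation axioms, bijection of classes) is correct and in fact more detailed than what the paper writes.

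One point of care: your opening premise that ``any first-order formula involves only finitely many variables'' is not automatic in the continuous setting the paper works in. The $E(x,y)$ produced by \cref{prop:crisp-type-char-1} is a definable predicate (a uniform limit of restricted formulas), and such a predicate on $H^\omega$ may genuinely depend on all coordinates. The paper handles this by observing that, since $E$ is $\{0,1\}$-valued on realizations of $p$, one can replace it by a \emph{restricted} formula $E_0$ that agrees with $E$ on realizations of $p$; a restricted formula then truly involves only finitely many variables, and your argument goes through verbatim with $E_0$ in place of $E$. Once you insert that one sentence, your proof matches the paper's.
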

\begin{proof}
  Assume that we have $p(x)$ and $E(x,y)$ satisfying the conclusion of \cref{prop:crisp-type-char-1} with $x = x_0x_1x_2\dots$ an $\omega$-tuple of variables. Since $E(x,y)$ is $\{0,1\}$-valued on $p(x)$, there is a restricted formula $E_0(x,y)$ that is equal to $E(x,y)$ for realizations of $p(x)$. This restricted formula can only involve finitely many variables in the tuple $x$, so we may write the formula $E_0$ as $E_0(z,w)$, where $z$ and $w$ are finite initial segments of $x$ and $y$, respectively, with $|z| = |w|$. Finally, if we take $q(z)$ to be the restriction of $p(x)$ to the tuple $z$, then we have that for realizations of $q(x)$, $E_0(z,w)$ is a $\{0,1\}$-valued equivalence relation with infinitely many classes.
\end{proof}

\begin{thm}\label{thm:crisp-imaginary-char-1-types}
  Fix a theory $T$. The following are equivalent.
  \begin{enumerate}
  \item $T$ has a non-algebraic crisp imaginary type.
  \item There is a countable set of parameters $A$, a $1$-type $p(x) \in S_x(A)$, and a formula $E(x,y)$ over $A$ such that $E$ defines a $\{0,1\}$-valued equivalence relation on the realizations of $p(x)$ with infinitely many equivalence classes.
  \item There is a model $M\models T$ with $|M|\leq |T|$, a $1$-type $p(x) \in S_x(M)$, and a formula $E(x,y)$ over $M$ such that $E$ defines a $\{0,1\}$-valued equivalence relation on the realizations of $p(x)$ with more than one equivalence class.
  \end{enumerate}
\end{thm}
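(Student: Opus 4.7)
The plan is to prove the three conditions equivalent via the cycle $(1)\Rightarrow (2)\Rightarrow (3)\Rightarrow (1)$, using Proposition~\ref{prop:crisp-type-char-1} and the preceding lemma as the technical bridge between imaginary types and $\{0,1\}$-valued equivalence formulas on real types, together with compactness and Ramsey-style arguments.

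For $(1)\Rightarrow (2)$ I start with a non-algebraic crisp imaginary type $q$ (a $1$-type in an imaginary sort with metric $d$) and reduce the parameter set to a countable one. Crispness of $d$ on realizations of $q$ is witnessed by the countably many inconsistencies $q(x)\cup q(y)\cup\{d(x,y)\ge 1/n,\,d(x,y)\le 1-1/n\}$, so by compactness there is a countable partial type $q_0\subseteq q$ over a countable $A_0$ on whose realizations $d$ is already $\{0,1\}$-valued. Applying Erd\H{o}s--Rado / Ramsey to an infinite sequence of pairwise distance-$1$ realizations of $q$ produces an $A_0$-indiscernible subsequence; the type of its first term over $A_0$ is a complete $1$-type with infinitely many pairwise distance-$1$ realizations, and $E:=d$ then witnesses~(2).

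For $(2)\Rightarrow (3)$, given $p\in S_x(A)$ over countable $A$ with infinitely many $E$-classes, I extract by Ramsey an $A$-indiscernible sequence $(a_n)_{n<|T|^+}$ of pairwise $E$-inequivalent realizations of $p$. By downward L\"owenheim--Skolem I build a model $M$ of size $|T|$ containing $A\cup\{a_n:2\le n<|T|\}$ but avoiding $a_0$ and $a_1$, choosing the witnesses added during the LS construction generically so as not to distinguish $a_0$ from $a_1$; since $a_0$ and $a_1$ already have the same type over $A\cup\{a_n:n\ge 2\}$ by indiscernibility, this can be arranged. Then $p^*:=\tp(a_0/M)=\tp(a_1/M)$ has $a_0$ and $a_1$ among its realizations with $E(a_0,a_1)=1$, giving two $E$-classes.

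For $(3)\Rightarrow (1)$, I would first upgrade~(3) from ``more than one'' to ``infinitely many'' $E$-classes on $p$'s realizations. Since $p$ is complete over the \emph{model} $M$, the group $\Aut(\M/M)$ acts transitively on the quotient $p/E$; if $p/E$ were finite, its elements would be algebraic imaginaries over $M$, and by a compactness-plus-indiscernibility stretching of $p$'s realizations (leveraging that $M$ is a model and hence well-behaved for such arguments) one can produce enough $E$-inequivalent realizations to contradict finiteness. Once infinitely many $E$-classes are established, the reverse direction of Proposition~\ref{prop:crisp-type-char-1} gives a non-algebraic crisp imaginary type. The hard part will be executing this amplification cleanly: bridging ``more than one'' to ``infinitely many'' using only that $M$ is a model of size $\le |T|$ is the main obstacle and will require a careful interplay of $\Aut(\M/M)$-orbits with the behavior of algebraic imaginaries over general (non-saturated) models.
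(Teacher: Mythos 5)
The proposal has a fatal gap in $(1) \Rightarrow (2)$, which is where the theorem's real content lives. Condition~$(2)$ calls for a $1$-type in the home sort --- that is precisely what this theorem adds over Proposition~\ref{prop:crisp-type-char-1} (a type in a real product sort) and the lemma after it (a finitary product sort). Your argument never leaves the imaginary sort: taking $E := d$ on realizations of $q$ over a countable parameter set is just condition~$(1)$ with a small base, not a descent to a single home-sort variable. The paper performs that descent by an induction on the number of variables: starting from the preceding lemma's crisp $E_0$ on a type in $H^{n_0}$, it peels off one coordinate at a time. The delicate case is when fixing a coordinate $c$ leaves only finitely many $E_i$-classes among tuples extending $c$; there one defines a coarser relation $F(z,w)$ on the restricted type $q(z)$ by an $\exists$-$\forall$ matching of class representatives, and the crux is showing $F$ is both type-definable and co-type-definable (via ultraproduct arguments over nets of types) so that it is given by a formula. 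Ramsey and indiscernibility play no role in this step, and your sketch does not engage with the reduction at all.

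You correctly flag the gap in $(3) \Rightarrow (1)$ (amplifying ``more than one class'' to ``infinitely many''), but the clean way to close it is not an orbit/algebraic-imaginary analysis; it is a short coheir argument. Take $a, b \models p$ in distinct $E$-classes, fix an $M$-coheir $q(x) \supset p(x)$, and let $c \models q \res Mab$. Since $q$ is $M$-invariant and $a \equiv_M b$, we get $E(c,a) = E(c,b)$; transitivity of $E$ then forces $c$ to lie in neither class, so a Morley sequence in $q$ is pairwise $E$-inequivalent and the easy direction of Proposition~\ref{prop:crisp-type-char-1} gives~$(1)$. (Your $(2) \Rightarrow (3)$ is also far more elaborate than the paper's one-line pigeonhole on completions of $p$ over $M$, and ``choose witnesses generically'' would need a careful extension of indiscernibility through the full L\"owenheim--Skolem construction, but this direction is not the essential problem.)
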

\begin{proof}
  2 obviously implies 1. To see that 1 implies 2, assume that we have a complete non-algebraic crisp imaginary type $p_0(\xbar)$ witnessed by the formula $E_0(\xbar,\ybar)$ over some countable set of parameters $A_0$ where $\xbar$ is an $n_0$-tuple of variables, with $n_0$ finite. We will inductively reduce the number of variables until we only have one.

  At stage $i$, given $p_i$, $E_i$, $A_i$, and $n_i$, if $n_i = 1$, then we are done. Otherwise, write $p_i$ as $p_i(\xbar,z)$ and $E_i$ as $E_i(\xbar,z,\ybar,w)$. Let $q(z)$ be the restriction of $p_i$ to the variable $y$. Let $c$ be some realization of $q$. At this point there are two cases.

  If the number of $E_i$-classes of elements of the form $\bbar c$ is infinite, then let $p_{i+1}(\xbar) = p_i(\xbar,c) \in S_{\xbar}(A_ic)$, let $E_{i+1}(\xbar,\ybar)=E_i(\xbar,c,\ybar,c)$, let $A_{i+1} = A_i c$, and let $n_{i+1}=n_i - 1$.

  If the number of $E_i$-classes of elements of the form $\bbar c$ is $k$ for some finite $k$, then the same will be true of any $c' \models q$ (since $q$ is a complete type). Let $F(z,w)$ be the relation on realizations of $q(z)$ defined by the following: $F(c,c')$ holds if and only if
  \begin{itemize}
  \item for every tuple $\bbar$ such that $\bbar c \models p_i$, there is a tuple $\bbar'$ such that $\bbar'c' \models p_i$ and $E_i(\bbar c,\bbar'c')$ and
  \item for every tuple $\bbar'$ such that $\bbar' c' \models p_i$, there is a tuple $\bbar$ such that $\bbar c \models p_i$ and $E_i(\bbar c, \bbar'c')$,
  \end{itemize}
  where quantification is taken over the monster model. This is clearly an $A_i$-invariant equivalence relation on realizations of $q(z)$. We need to show that it is actually type-definable and co-type-definable on realizations of $q(z)$. To see that it is type-definable, it is sufficient to show that if $(r_j(z,w))_{j \in J}$ is a net of types in $S_{zw}(A_i)$ converging to $r(z,w)$ such that $r_j(z,w) \vdash q(z)\wedge q(w) \wedge F(z,w)$ for each $j \in J$, then $r(z,w) \vdash F(z,w)$ as well. For each $j \in J$, let $M_j$ be an $|\Lc|^+$-saturated model containing $c_j$ and $c'_j$ such that $c_jc'_j \models r_j$. For each $j \in J$, since $M_j$ is $|\Lc|^+$-saturated, we know that there are $\bbar_{j,0},\dots,\bbar_{j,k-1}$ such that $\bbar_{j,\ell} c_j\models p_i$ for each $\ell < k$ and such that the $E_i$-classes of $\bbar_{j,0}c_j,\dots,\bbar_{j,k-1}c_j$ are distinct and represent all $E_i$-classes of tuples containing $c_j$. We also know that there are similar tuples $\bbar'_{j,0},\dots,\bbar'_{j,k-1}$ for $c'_j$. Since $F(c_j,c'_j)$ holds, we may assume that $E_i(\bbar_{j,\ell}c_j,\bbar'_{j,\ell}c'_j)$ holds for each $\ell < k$.

  Let $\Uc$ be an ultrafilter on $J$ extending the filter generated by $\{\{j' \geq j : k \in J\}: j \in J\}$. (This is a filter because $J$ is a directed set.) Let $(M_\Uc,c_\Uc,\bbar_{\Uc,0},\dots,\allowbreak\bbar_{\Uc,k-1},\allowbreak c'_\Uc,\bbar'_{\Uc,0},\dots,\bbar'_{\Uc,k-1})$ be the ultraproduct
  \[
    \prod_{j \in J}(M_j,c_j,\bbar_{j,0},\dots,\bbar_{j,k-1},c'_j,\bbar'_{j,0},\dots,\bbar'_{j,k-1})/\Uc.
  \]
  Since $E_i$ is type-definable, we have that $E(\bbar_{\Uc,\ell}c_{\Uc},\bbar'_{\Uc,\ell}c'_{\Uc})$ holds for each $\ell < k$. Furthermore, by the choice of $k$, we know that there are no more $E_i$-equivalence classes of tuples containing $c_\Uc$ and of tuples containing $c'_\Uc$, so $F(c_\Uc,c'_\Uc)$ holds. Therefore we have that $F(z,w)$ is type-definable.

  \begin{sloppypar}
    Now to see that it is co-type-definable, we need to show that if $(r_j(z,w))_{j \in J}$ is a net of types in $S_{zw}(A_i)$ converging to $r(z,w)$ such that $r_j(z,w) \vdash q(z)\wedge q(w) \wedge \neg F(z,w)$ for each $j \in J$, then $r(z,w) \vdash \neg F(z,w)$. To this end, find a family of models and tuples $(M_j,c_j,\bbar_{j,0},\dots,\bbar_{j,k-1},c'_j,\bbar'_{j,0},\dots,\bbar'_{j,k-1})_{j \in J}$ as before, with $\bbar_{j,0}c_j,\dots,\bbar_{j,k-1}c_j$ representing all $E_i$-classes of tuples containing $c_j$ and $\bbar'_{j,0}c'_j,\dots,\bbar'_{j,k-1}c'_j$ representing all $E_i$-classes of tuples containing $c'_j$. Let $\Uc$ be an ultrafilter that is compatible with the directed set $J$ as before. Since $\neg F(c_j,c'_j)$ for each $j \in J$, we have that for each $j \in J$, either there is an $\ell < k$ such that $\neg E_i(\bbar_{j,\ell}c_j,\bbar'_{j,m}c'_j)$ for all $m < k$, or there is an $m < k$ such that $\neg E_i(\bbar_{j,\ell}c_j,\bbar'_{j,m}c'_j)$ for all $\ell < k$. Since there are finitely many possibilities, there is an $X \in \Uc$ such that this behaves uniformly on $X$. Without loss of generality, we may assume that for all $j \in X$, $\neg E_i(\bbar_{j,0}c_j,\bbar'_{j,m}c'_j)$ for all $m<k$. Since this is a type-definable fact, we have that in the ultraproduct $(M_\Uc,c_\Uc,\bbar_{\Uc,0},\dots,\bbar_{\Uc,k-1},c'_\Uc,\bbar'_{\Uc,0},\dots,\bbar'_{\Uc,k-1})$, $\neg E_i(\bbar_{\Uc,0}c_\Uc,\bbar'_{\Uc,m}c'_\Uc)$ for all $m < k$. Once again, since these are the only $E_i$-classes containing tuples extending $c_j$ and $c'_j$, we have that $\neg F(c_\Uc,c'_\Uc)$. Therefore $F(z,w)$ is co-type-definable as well.
  \end{sloppypar}
  Since $F(z,w)$ is type-definable and co-type-definable, we can find an formula $E_{i+1}(y,z)$ that is $\{0,1\}$-valued and agrees with $F(z,w)$ on realizations of $q(y)$. Let $p_{i+1}(z) = q(z)$, $A_{i+1}=A_i$, and $n_{i+1} = 1$.

  By induction, since $n_{i+1} < n_i$ at each stage, we eventually arrive at the required $p(x)$, $E(x,y)$, and $A$.

  To see that 2 implies 3, assume 2 and pass to a model $M$ containing $A$ with $|M| \leq |T|$. By the pigeonhole principle, there is some completion of $p(x)$ over $M$ with infinitely many $E$-classes.

  Finally, assume that 3 holds. Let $a$ and $b$ be two realizations of $p(x)$ such that $E(a,b) = 0$. Fix an $M$-coheir $q(x)\supset p(x)$ and let $c \models q \res M ab$. Since $E$ is an equivalence relation, it must be the case that $E(c,a) = E(c,b) = 0$. Therefore any Morley sequence generated by $q$ is pairwise $E$-inequivalent and so $E$ has infinitely many equivalence classes among the realizations of $p$.
\end{proof}

The following corollary gives a nice test for establishing that a theory has no non-algebraic crisp imaginary types. Recall that $\dc M$ is the metric density character of $M$.

\begin{cor}\label{cor:connected-test}
  Suppose that for any $M \models T$ with $\dc M \leq |T|$ and any $1$-type $p(x)\in S_1(M)$, the set $F_p\coloneqq \{q(x,y) \in S_2(M) : q(x,y) \supseteq p(x)\cup p(y)\}$ is connected. Then $T$ has no non-algebraic crisp imaginary types.
\end{cor}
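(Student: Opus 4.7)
The plan is to argue by contrapositive via \cref{thm:crisp-imaginary-char-1-types}, specifically the implication $1\To 3$. Suppose $T$ has a non-algebraic crisp imaginary type. Then that theorem supplies a model $M\models T$ with $|M|\leq |T|$, a $1$-type $p(x)\in S_x(M)$, and a formula $E(x,y)$ over $M$ such that $E$ is $\{0,1\}$-valued on realizations of $p$ and defines an equivalence relation there with more than one class. Since the density character is bounded by the cardinality, $\dc M \leq |T|$, so the hypothesis of the corollary applies to this $M$, yielding that $F_p$ is connected. I plan to derive a contradiction by showing $F_p$ is disconnected.

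The main step is to extract from $E$ a continuous $\{0,1\}$-valued function $\hat E\colon F_p\to\{0,1\}$. For any $q(x,y)\in F_p$, the value of $E$ on realizations of $q$ is constant (since $q$ is complete) and lies in $\{0,1\}$ (since $q\supseteq p(x)\cup p(y)$ and $E$ is $\{0,1\}$-valued there). Setting $\hat E(q)$ equal to this common value, continuity of $\hat E$ on $F_p$ follows from the fact that $q\mapsto E(x,y)^q$ is a continuous real-valued function on $S_2(M)$, so its restriction to $F_p$ is continuous and takes values in $\{0,1\}$. The two preimages $\hat E^{-1}(0)$ and $\hat E^{-1}(1)$ therefore partition $F_p$ into clopen subsets.

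To finish, I would verify that both preimages are nonempty. For any $a\models p$, the type $\tp(a,a/M)$ lies in $F_p$ and belongs to $\hat E^{-1}(0)$ by reflexivity of $E$; and because $E$ has more than one class among realizations of $p$, I can pick $a,b\models p$ with $E(a,b)=1$, so that $\tp(a,b/M)\in F_p\cap \hat E^{-1}(1)$. Hence $F_p$ is disconnected, contradicting the hypothesis. The proof is essentially routine given \cref{thm:crisp-imaginary-char-1}; the only point requiring a moment of thought is aligning the output $|M|\leq |T|$ of that theorem with the hypothesis $\dc M\leq |T|$ of the corollary, and this is immediate since density character is dominated by cardinality.
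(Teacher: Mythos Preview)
Your proof is correct and follows essentially the same approach as the paper: invoke \cref{thm:crisp-imaginary-char-1-types} to obtain a model $M$, a $1$-type $p\in S_1(M)$, and a $\{0,1\}$-valued equivalence relation $E$ on realizations of $p$, and observe that $E$ induces a nontrivial clopen partition of $F_p$. The only difference is that you use condition~3 of the theorem directly (which already yields a complete type over a small model with more than one $E$-class), whereas the paper invokes condition~2 and then passes to a model $M\supseteq A$ and applies pigeonhole to find a completion with many $E$-classes; your route is marginally more direct but the argument is the same in substance.
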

\begin{proof}
  By \cref{thm:crisp-imaginary-char-1-types}, if $T$ has a non-algebraic crisp imaginary type, then there is a countable set of parameters $A$, a $1$-type $r(x) \in S_x(A)$, and an $A$-formula $E(x,y)$ such that $E$ defines a $\{0,1\}$-valued equivalence relation on the realizations of $r(x)$ with infinitely many classes. If we find a model $M \supseteq A$ with $\dc M \leq |T|$, then there is a completion $p(x) \supseteq r(x)$ such that among the realizations of $p(x)$ there are infinitely many $E$-classes (by the pigeonhole principle). Therefore the set $F_p$ has a non-trivial clopen subset (i.e., the set of types $q(x,y) \supseteq p(x)\cup p(y)$ such that $q(x,y) \vdash E(x,y) = 1$) and is not connected. 
\end{proof}

In particular, \cref{cor:connected-test} can be used to quickly verify that the theories of infinite-dimensional Hilbert spaces, atomless probability algebras, and the Urysohn sphere do not have any non-algebraic crisp imaginary types (although for Hilbert spaces at least this was established in \cite{Ben-Yaacov2004Old}). Perhaps more surprisingly, this also implies the same for the randomization of any discrete or continuous theory.

It is unclear at the moment whether the condition in \cref{cor:connected-test} characterizes the non-existence of non-algebraic crisp imaginary types, although it seems unlikely.

\begin{quest}
  If $T$ has no non-algebraic crisp imaginary types, does it follow that for every model $M \models T$ and every $1$-type $p(x) \in S_x(M)$, the set $\{q(x,y) \in S_2(M) : q(x,y) \supseteq p(x) \cup p(y)\}$ is connected?
\end{quest}

\subsection{Relationship with earlier notions}
\label{sec:relationship-with-earlier}

Here we will show that under a non-trivial, but relatively common, niceness condition, a theory with a non-algebraic crisp imaginary type interprets the theory of an infinite discrete structure. A theory $T$ is \emph{dictionaric} if for any set of parameters $A$ and any finite tuple of variables $x$, the type space $S_{\xbar}(A)$ has a basis of definable neighborhoods. As established in \cite{CatCon}, this condition allows one to generalize many constructions from discrete logic to the context of continuous logic.

\begin{lem}\label{lem:crisp-nbd}
  If $p(x) \in S_x(A)$ is a crisp type in some sort, then there is an open neighborhood $U \ni p(x)$ such that for any $a$ and $b$ with $\tp(a/A),\tp(b/A) \in U$, either $d(a,b) \leq \frac{1}{4}$ or $d(a,b) \geq \frac{3}{4}$.
\end{lem}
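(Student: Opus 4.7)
My plan is to work in the two-variable type space $S_{xy}(A)$ and exploit compactness together with continuity of the natural projection to $S_x(A) \times S_x(A)$. Let $K \subseteq S_{xy}(A)$ be the closed set of two-types $q(x,y)$ that force $d(x,y)$ to lie in $[\tfrac{1}{4},\tfrac{3}{4}]$; closedness is immediate since $d(x,y)$ is a continuous function on $S_{xy}(A)$. The hypothesis that $p$ is crisp says precisely that every $q \in S_{xy}(A)$ extending $p(x) \cup p(y)$ assigns $d(x,y)$ a value in $\{0,1\}$, so the fiber of the projection $\pi\colon S_{xy}(A) \to S_x(A) \times S_x(A)$, $\pi(q) = (q\res x,\, q\res y)$, over $(p,p)$ is disjoint from $K$.

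The key step is then a standard closed-map argument. Since $\pi$ is continuous, $S_{xy}(A)$ is compact, and $S_x(A) \times S_x(A)$ is Hausdorff, the image $\pi(K)$ is closed, and by the previous paragraph $(p,p) \notin \pi(K)$. Hence there is a basic product neighborhood $U_1 \times U_2 \ni (p,p)$ in $S_x(A) \times S_x(A)$ disjoint from $\pi(K)$; setting $U = U_1 \cap U_2$ yields an open neighborhood of $p$ in $S_x(A)$ with $(U \times U) \cap \pi(K) = \emptyset$.

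Unwinding the definitions, if $a, b$ are elements with $\tp(a/A), \tp(b/A) \in U$, then $\tp(ab/A) \notin K$, so $d(a,b) < \tfrac{1}{4}$ or $d(a,b) > \tfrac{3}{4}$, which is in fact slightly stronger than what the lemma asks for. I do not anticipate any real obstacle here; the whole proof is just the standard lifting of a fiberwise property to a neighborhood via the closed-map property of projections from compact spaces to Hausdorff spaces, combined with the elementary observation that open neighborhoods of $(p,p)$ in a product contain basic rectangles.
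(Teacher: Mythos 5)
Your proof is correct and is essentially the paper's compactness argument, reorganized via the closed-map property of the projection from $S_{xy}(A)$ to $S_x(A)\times S_x(A)$ rather than by the paper's more direct contrapositive-plus-compactness step. Both proofs rest on the same fact---that the set of $2$-types forcing $d(x,y)$ into $[\tfrac{1}{4},\tfrac{3}{4}]$ is compact and misses the fiber over $(p,p)$---and your version even delivers the marginally sharper strict inequalities.
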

\begin{proof}
  Assume that for every open neighborhood $U \ni p(x)$, there are $a$ and $b$ realizing types in $U$ such that $\frac{1}{4} < d(a,b) < \frac{3}{4}$. Then, by compactness, there are $a$ and $b$ realizing $p(x)$ such that $\frac{1}{4} \leq d(a,b) \leq \frac{3}{4}$, contradicting crispness of $p(x)$.
\end{proof}

\begin{prop}\label{prop:dict-then-interp}
  If $T$ is dictionaric and has a non-algebraic crisp type (in a real sort), then it interprets the theory of an infinite discrete structure.%
\end{prop}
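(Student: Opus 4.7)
The plan is to use the dictionaric hypothesis to localize the crisp behavior of $p(x)$ to a genuinely definable set, and then quotient by the obvious $\{0,1\}$-valued equivalence relation to produce an infinite imaginary sort on which the metric is uniformly discrete.

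Suppose $p(x) \in S_x(A)$ is the given non-algebraic crisp type, passing to a finite tuple of variables first if necessary. By \cref{lem:crisp-nbd} there is an open neighborhood $U \ni p(x)$ such that any two realizations of types in $U$ are at distance either $\leq \tfrac{1}{4}$ or $\geq \tfrac{3}{4}$. Dictionaricity then supplies an $A$-definable neighborhood $D$ of $p(x)$ contained in $U$, so that the metric on realizations of types in $D$ is confined to $[0,\tfrac{1}{4}] \cup [\tfrac{3}{4},1]$.

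Next, I would introduce the $A$-formula $E(x,y) \coloneqq \mxmnu{2d(x,y) - \tfrac{1}{2}}$. On $D \times D$ this formula is $\{0,1\}$-valued, with $E(x,y) = 0$ iff $d(x,y) \leq \tfrac{1}{4}$ and $E(x,y) = 1$ iff $d(x,y) \geq \tfrac{3}{4}$. Reflexivity and symmetry are immediate, while transitivity follows from the triangle inequality combined with bimodality: if $E(x,y) = E(y,z) = 0$ for realizations in $D$, then $d(x,z) \leq \tfrac{1}{2} < \tfrac{3}{4}$, forcing $d(x,z) \leq \tfrac{1}{4}$ and thus $E(x,z) = 0$. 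Hence $E$ is a $\{0,1\}$-valued definable pseudo-metric on $D$.

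Forming the quotient $D/E$ via the standard continuous imaginary-sort construction yields a legitimate imaginary sort whose metric takes only the values $0$ and $1$, i.e., a uniformly discrete structure. Since $p(x)$ is non-algebraic and crisp, it has infinitely many realizations pairwise at distance $1$, hence falling into pairwise distinct $E$-classes, so $D/E$ is infinite; restricting attention to this sort, the induced theory is tantamount to that of an infinite discrete structure, as desired. The main obstacle is the very first step: dictionaricity is precisely what converts the bimodal type-space neighborhood $U$ from \cref{lem:crisp-nbd} into a formula-definable set $D$, without which the crisp behavior, though present in the type space, cannot be funneled into a quotient. Once $D$ is in hand, the remainder of the argument is essentially a triangle-inequality computation.
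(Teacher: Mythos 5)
Your proposal is correct and follows essentially the same route as the paper's proof: invoke \cref{lem:crisp-nbd} for the bimodal neighborhood $U$, use dictionaricity to shrink to an $A$-definable $D\subseteq U$, and then read off a $\{0,1\}$-valued equivalence relation on $D$ with infinitely many classes (the paper uses $\mxmnu{4d(x,y)-1}$ where you use $\mxmnu{2d(x,y)-\tfrac12}$, which is an equivalent choice). The extra detail you supply---the triangle-inequality check of transitivity and the explicit passage to the quotient sort---is left implicit in the paper but is the same content.
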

\begin{proof}
  Let $p(x) \in S_x(A)$ be a non-algebraic crisp type.  By \cref{lem:crisp-nbd}, there is an open set $U \ni p(x)$ such that for any $a$ and $b$ realizing types in $U$, either $d(a,b) \leq \frac{1}{4}$ or $d(a,b) \geq \frac{3}{4}$. Since $T$ is dictionaric, we can find an $A$-definable set $D$ such that $p \in D \subseteq U$. Therefore the same fact holds for elements of $D$. The function $E(x,y) = \mxmnu{4d(x,y)-1}$ now defines a $\{0,1\}$-valued equivalence relation on the set of realizations of $D$. Since $D$ contains $p(x)$, there are infinitely many $E$-classes.
\end{proof}

Note however that it is not generally true that $T^{\mathrm{eq}}$ is dictionaric for dictionaric $T$.\footnote{See the discussion after Corollary~4.12 in \cite{CatCon}.} We will show in \cref{prop:heart-no-interpret} that without the assumption of dictionaricity, these conditions are not equivalent.

\begin{prop}\label{prop:interpreting-infinite-discrete-structure-characterization}
Fix a theory $T$.
\begin{enumerate}
\item If $T$ has formulas $\varphi(\xbar)$ and $E(\xbar,\ybar)$ such that
  \begin{itemize}
  \item $E$ is a $\{0,1\}$-valued equivalence relation on $\{\abar : \varphi(\abar) < 1\}$,
  \item $E$ has infinitely many equivalence classes, and
  \item every $E$-class of some $\abar$ with $\varphi(\abar) < 1$ contains a $\bbar$ such that $\varphi(\bbar) = 0$,
  \end{itemize}
  then $T$ has an infinite crisp imaginary sort $I$ and a bijection $f$ between $I$ and $E$-classes of elements with $\varphi < 1$ such that any automorphism of the monster fixes $a \in I$ if and only if it fixes $f(a)$.
\item Such a pair of formulas exists for any infinite crisp imaginary sort.
\item The above formulas may be taken to be restricted formulas.
\item If the above formulas are quantifier-free, they may be taken to be quantifier-free restricted formulas.
\end{enumerate}
\end{prop}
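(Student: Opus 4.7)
For part (1), the plan is to recast the data $(\varphi, E)$ as a type-definable equivalence relation and then apply the construction from the converse direction of \cref{prop:crisp-type-char-1}. Specifically, I would set $p(\bar{x}) \coloneqq \{\varphi(\bar{x}) = 0\}$; by the third bullet, every $E$-class of $\{\varphi < 1\}$ meets $p$, so $E$ restricted to realizations of $p$ is still a $\{0,1\}$-valued equivalence relation with infinitely many classes. This puts us in precisely the situation of the second paragraph of the proof of \cref{prop:crisp-type-char-1}: the relation $d(\bar{x}, \bar{y}) = 0 \vee (p(\bar{x}) \wedge p(\bar{y}) \wedge E(\bar{x}, \bar{y}) = 0)$ is type-definable, so \cite[Thm.~2.20]{ben-yaacov_2005} produces a definable pseudometric $\rho$ on $H^n$, which a compactness argument allows us to rescale to be $\{0,1\}$-valued on realizations of $p$. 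The imaginary sort $I$ is then taken to be the image of $\{\varphi = 0\}$ inside $H^n/\rho$, and $f$ sends the $\rho$-class of $\bar{a} \in \{\varphi = 0\}$ to the $E$-class of $\bar{a}$ within $\{\varphi < 1\}$; this is well-defined and bijective by the third bullet, and the automorphism-fixing condition is automatic because $f$ is $\emptyset$-definable from $(\varphi, E)$.

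Part (2) will be the main technical hurdle, and its proof runs the construction of (1) in reverse. Given an infinite crisp imaginary sort $I$, \cref{prop:crisp-type-char-1} gives a definable pseudometric $\rho$ on some $H^n$ and a definable subset $D \subseteq H^n/\rho$ isomorphic to $I$, with $\rho$ being $\{0,1\}$-valued on $\pi^{-1}(D)$ (where $\pi$ is the quotient map). Since $D$ is a definable set, its distance predicate $d_D$ on $H^n/\rho$ is definable, and pulling back yields a definable predicate $\tilde\psi(\bar{x}) \coloneqq d_D(\pi(\bar{x}))$ on $H^n$ with $\{\tilde\psi = 0\} = \pi^{-1}(D)$. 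I would then set $E(\bar{x}, \bar{y}) \coloneqq \mxmnu{4\rho(\bar{x}, \bar{y}) - 1}$ and $\varphi(\bar{x}) \coloneqq \mxmnu{K\tilde\psi(\bar{x})}$ for a large constant $K$ (say $K = 16$). The three bullets follow from triangle-inequality arguments: for $\bar{a}$ with $\varphi(\bar{a}) < 1$ we get $\tilde\psi(\bar{a}) < 1/K$, hence some $\bar{b} \in \pi^{-1}(D)$ with $\rho(\bar{a}, \bar{b}) < 1/K$, so $\varphi(\bar{b}) = 0$ and $E(\bar{a}, \bar{b}) = 0$; and for any $\bar{a}, \bar{a}' \in \{\varphi < 1\}$ with corresponding $\bar{b}, \bar{b}' \in \pi^{-1}(D)$, the triangle inequality gives $|\rho(\bar{a}, \bar{a}') - \rho(\bar{b}, \bar{b}')| < 2/K < 1/4$, which together with $\rho(\bar{b}, \bar{b}') \in \{0,1\}$ forces $E(\bar{a}, \bar{a}') \in \{0,1\}$. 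If a genuine formula (rather than a definable predicate) is required, a uniform approximation of $\tilde\psi$ handles it and flows naturally into the next part.

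For parts (3) and (4), my plan is to exploit the uniform density of restricted formulas. Given $(\varphi, E)$ satisfying the conditions, pick a small $\delta > 0$ and choose restricted formulas $\varphi_0, E_0$ with $\sup|\varphi - \varphi_0|, \sup|E - E_0| < \delta$; then set $\varphi' \coloneqq \mxmnu{(\varphi_0 - \delta)/(1 - 2\delta)}$ and $E' \coloneqq \mxmnu{(E_0 - \delta)/(1 - 2\delta)}$. These clamped expressions are still restricted formulas, and one checks $\{\varphi = 0\} \subseteq \{\varphi' = 0\}$ while $\{\varphi' < 1\} \subseteq \{\varphi < 1\}$, and that $E'$ is $\{0,1\}$-valued on $\{\varphi' < 1\}$ and agrees with $E$ there; the three bullet conditions are preserved under these set containments. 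For part (4), the same construction goes through unchanged, since uniform approximation of a quantifier-free formula by a restricted formula can be done without introducing quantifiers (the approximation takes place at the level of connectives), and the clamping $\mxmnu{\cdot}$ is itself quantifier-free.
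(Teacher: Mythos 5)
Parts 2, 3, and 4 of your proposal are essentially the paper's proofs up to a choice of constants and are fine. The gap is in Part 1, where you apply \cite[Thm.~2.20]{ben-yaacov_2005} to get a definable pseudometric $\rho$, rescale it to be $\{0,1\}$-valued on realizations of $p = \{\varphi = 0\}$, and then declare that ``the imaginary sort $I$ is taken to be the image of $\{\varphi = 0\}$ inside $H^n/\rho$.'' For this image to be an imaginary sort, it must be a \emph{definable} subset of $H^n/\rho$, and you never verify this. Note that you cannot simply take $I = H^n/\rho$: the rescaled $\rho$ is only guaranteed to be $\{0,1\}$-valued on realizations of $p$, not everywhere, so that sort is not crisp; you genuinely need a definable subset of the quotient. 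But the pseudometric supplied by ben-Yaacov's theorem has no built-in relationship with $\varphi$ --- in particular $\varphi$ need not be $\rho$-uniformly continuous, so it need not descend to a formula on $H^n/\rho$ at all, and the image of $\{\varphi = 0\}$ is only visibly type-definable, not definable. This definability claim is precisely the content of Part 1, and your proposal stops exactly where the real work begins.

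The paper avoids this by not applying ben-Yaacov's theorem in Part 1. It instead builds a bespoke pseudometric $\rho(\bar{x},\bar{y}) = \max\bigl(|\varphi(\bar{x})-\varphi(\bar{y})|,\, \sup_{\bar{z}}|\psi(\bar{x},\bar{z})-\psi(\bar{y},\bar{z})|\bigr)$, where $\psi$ is a blend of $\varphi$ and $E$. The term $|\varphi(\bar{x})-\varphi(\bar{y})|$ forces $\varphi$ to be $1$-Lipschitz with respect to $\rho$, hence a formula on $H^n/\rho$, making $D = \{x \in H^n/\rho : \varphi(x) = 0\}$ at least a zeroset. The third bullet (every $E$-class of $\{\varphi < 1\}$ meets $\{\varphi = 0\}$) is then used to show that if $\varphi(\pi(\bar{a}))$ is small, there is a $\bar{b}$ $E$-equivalent to $\bar{a}$ with $\varphi(\bar{b}) = 0$ and $\rho(\bar{a},\bar{b})$ small, which is exactly the estimate needed to upgrade the zeroset $D$ to a definable set. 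To repair Part 1 you would need to replace the abstract $\rho$ with something like the paper's explicit construction and then run this definability argument; as stated, the claim that the image is a sort is unsupported.
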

\begin{proof}
  For 1, assume that we are given $\varphi(\xbar)$ and $E(\xbar,\ybar)$ such that the bullet points hold. We may assume that $\xbar$ is an $\omega$-tuple. By replacing $\varphi(\xbar)$ with $\mxmnu{\varphi(\xbar)}$ and $E(\xbar,\ybar)$ with $\mxmnu{E(\xbar,\ybar)}$ if necessary, we may assume that $\varphi(\xbar)$ and $E(\xbar,\ybar)$ are $[0,1]$-valued everywhere. Consider the formulas
 \begin{align*}
   \psi(\xbar,\zbar) &= \mxmnu{\min( 2-3\varphi(\xbar), E(\xbar,\zbar))}, \\
   \rho(\xbar,\ybar) &= \max\left(  |\varphi(\xbar)-\varphi(\ybar)|, \sup_{\zbar}|\psi(\xbar,\zbar) - \psi(\ybar,\zbar)|  \right).
 \end{align*}
 $\rho(\xbar,\ybar)$ is the supremum of a family of pseudo-metrics and so is a pseudo-metric. Furthermore, since $\rho(\xbar,\ybar) \geq |\varphi(\xbar)-\varphi(\ybar)|$, we may regard $\varphi(\xbar)$ as a formula on the imaginary sort $H^\omega/\rho$, which we will write as $\varphi(x)$. Let $\pi : H^\omega \to H^\omega / \rho$ be the quotient map.

 We need to argue the following: The set $D = \{x \in H^\omega/\rho : \varphi(x) = 0\}$ is infinite and definable and $\rho$ is $\{0,1\}$-valued on $D$. First to see that there are infinitely many elements in $D$, note that if $\bbar$ and $\bbar'$ are two elements of $H^\omega$ in different $E$-classes with $\varphi(\bbar)=\varphi(\bbar')=0$, then $\psi(\bbar,\bbar) = 1$ and $\psi(\bbar',\bbar) = 0$, so $\rho(\bbar,\bbar') = 1$. Therefore $\pi(\bbar)$ and $\pi(\bbar')$ are distinct elements of $D$. Since there are infinitely many $E$-classes, this establishes that $D$ is infinite. To show that that $D$ is definable, fix $\e > 0$ with $\e < 1$ and note that if $\varphi(\pi(\abar)) < \e$, then $\varphi(\abar) = \varphi(\pi(\abar)) < \e$, so we can find $\bbar$ in the same $E$ class as $\abar$ such that $\varphi(\bbar) = 0$. We then have that $\varphi(\pi(\bbar)) = 0$ so $\pi(\bbar) \in D$. Now consider $\rho(\bbar,\abar)$. We have that $|\varphi(\bbar)-\varphi(\abar)| < \e$. Furthermore, for any $\cbar \in H^\omega$, we either have that $\varphi(\cbar) = 1$ or $\varphi(\cbar) < 1$. If $\varphi(\cbar) = 1$, then $\psi(\bbar,\cbar) = \psi(\abar,\cbar) = 1$. On the other hand if $\varphi(\cbar) < 1$, then $E(\bbar,\cbar)$ and $E(\abar,\cbar)$ are equal, since $E$ is a $\{0,1\}$-valued equivalence relation on $\{\xbar: \varphi(\xbar) < 1\}$ and $\bbar$ and $\abar$ are $E$-equivalent. Since we can do this for any sufficiently small $\e > 0$, we have that $D$ is definable. We can now treat $D$ as the required imaginary sort. It is clear that for any $\abar$ and $\bbar$ with $\varphi(\abar) = \varphi(\bbar) = 0$, $\pi(\abar) = \pi(\bbar)$ if and only if $E(\abar,\bbar)$, so we have that the required function $f$ exists as well.
  
 For 2, by \cite[Lem.~2.17]{CatCon}, we know that if $T$ has an infinite crisp imaginary sort, then there is a definable pseudo-metric $\rho$ on $H^\omega$ (where $H$ is the home sort) and an infinite definable set $D \subseteq H^\omega/\rho$ on which $\rho$ is $\{0,1\}$-valued. Let $\delta(x)$ be the distance predicate of $\delta$ in the imaginary sort $H^\omega/\rho$. Let $q: H^\omega \to H^\omega/\rho$ be the (definable) quotient map. Let $\varphi(\xbar) = \min(6\delta(q(\xbar)),1)$ and let $E(\xbar,\ybar) = \mxmnu{1-3\rho(\xbar,y)}$. We need to show that $\varphi(x)$ and $E(x,y)$ satisfy the bullet points in the statement of the proposition. For any $\abar$, we have that $\varphi(\abar) < 1$ if and only if $\delta(q(\abar)) < \frac{1}{6}$ (i.e., $d(q(\abar),D) < \frac{1}{6}$). For any $\abar$ and $\bbar$ satisfying $\varphi(\xbar) < 1$, we either have that $q(\abar)$ and $q(\bbar)$ have $\rho$-distance less than $\frac{1}{6}$ to the same point in $D$, in which case $\rho(\abar,\bbar) < \frac{1}{6}+\frac{1}{6} = \frac{1}{3}$ and so $E(\abar,\bbar) = 1$, or they have $\rho$-distance less than $\frac{1}{6}$ to different points in $D$, in which case $\rho(\abar,\bbar) > 1 - \frac{1}{6}-\frac{1}{6} = \frac{2}{3}$ and so $E(\abar,\bbar) = 0$. Evidently, this means that $E(\xbar,\ybar)$ is a $\{0,1\}$-valued equivalence relation on $\{\abar : \varphi(\abar) < 1\}$.

  On the other hand, if $\varphi(\abar) < 1$, then there is a $\bbar$ such that $q(\bbar) \in D$ (and therefore $\varphi(\bbar) = 0$) and $\rho(\abar, \bbar) < \frac{1}{6}$ (and therefore $E(\abar,\bbar) = 1$). Therefore the last bullet point holds as well.

  For 3 and 4, to see that we may assume that $\varphi(\xbar)$ and $E(\xbar,\ybar)$ are restricted formulas, find restricted formulas $\psi(\xbar)$ and $F(\xbar,\ybar)$ such that $|\varphi(\xbar)-\psi(\xbar)| < \frac{1}{3}$ and $|E(\xbar,\ybar)-F(\xbar,\ybar)| < \frac{1}{3}$ for all $\xbar$ and $\ybar$. (Note that we can take $\psi(\xbar)$ and $F(\xbar,\ybar)$ to be quantifier-free if $\varphi(\xbar)$ and $E(\xbar,\ybar)$ are quantifier-free.) Now let $\varphi'(\xbar)= \mxmnu{3\psi(\xbar)-1}$ and $E'(\xbar,\ybar) = \mxmnu{3E(\xbar,\ybar)-1}$. For any $\abar$, we have that if $\varphi(\abar) = 0$, then $\varphi'(\abar) = 0$ and if $\varphi'(\abar)< 1$, then $\varphi(\xbar) < \frac{2}{3}$. Furthermore, we have that for any $\abar$ and $\bbar$, if $E(\abar,\bbar) \in \{0,1\}$, then $E(\abar,\bbar) = E'(\abar,\bbar)$ as well. Therefore $\varphi'(\xbar)$ and $E'(\xbar,\ybar)$ satisfy the bullet point as required.
\end{proof}

The following corollary is more or less a rephrasing of part of \cref{prop:interpreting-infinite-discrete-structure-characterization}, but it is worth emphasizing.

\begin{cor}
  A continuous theory $T$ interprets a discrete theory with infinite models if and only if it has a pair of restricted formulas $\varphi(\xbar)$ and $E(\xbar,\ybar)$ satisfying that
   $E$ is a $\{0,1\}$-valued equivalence relation on $\{\abar : \varphi(\abar) < 1\}$,
   $E$ has infinitely many equivalence classes, and
   every $E$-class of some $\abar$ with $\varphi(\abar) < 1$ contains a $\bbar$ such that $\varphi(\bbar) = 0$.
 \end{cor}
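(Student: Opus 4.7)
The plan is to reduce the corollary directly to \cref{prop:interpreting-infinite-discrete-structure-characterization} by observing that the two stated conditions are each equivalent to the existence of an infinite crisp imaginary sort. The content of the corollary beyond the proposition is thus essentially just unwinding what ``interprets a discrete theory with infinite models'' means in continuous logic.

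For the backward direction, I would start from a pair of restricted formulas $\varphi(\xbar)$ and $E(\xbar,\ybar)$ as in the statement and apply part~1 of \cref{prop:interpreting-infinite-discrete-structure-characterization} to produce an infinite crisp imaginary sort $I$ in $T$ whose metric is $\{0,1\}$-valued. Because $I$ is an imaginary sort with $\{0,1\}$-valued metric, it is literally a discrete structure interpretable in $T$; taking its theory in the empty language (or more generously, equipping $I$ with all formulas that happen to be $\{0,1\}$-valued on it) exhibits $T$ as interpreting a discrete theory whose models are infinite.

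For the forward direction, I would start from an interpretation of a discrete theory $T^\prime$ with an infinite model. The interpreted sort is a definable set in some (possibly hyper)imaginary sort of $T$; since the induced metric coming from $T^\prime$ must be $\{0,1\}$-valued (that is what ``discrete'' means on the interpreted side), this yields an infinite crisp imaginary sort in $T$. Now apply part~2 of \cref{prop:interpreting-infinite-discrete-structure-characterization} to obtain formulas $\varphi(\xbar)$ and $E(\xbar,\ybar)$ satisfying the three bullet points, and finally apply part~3 to replace them with restricted formulas with the same properties.

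The only real subtlety, and hence the main thing to be careful about, is the soft bookkeeping in the forward direction: making sure that whatever flavor of interpretation one adopts in continuous logic actually produces a definable set in some $H^\omega/\rho$ rather than something merely type-definable. Since the interpreted sort carries a $\{0,1\}$-valued metric, this is automatic via the quotient construction already used in the proof of \cref{prop:crisp-type-char-1} and \cite[Lem.~2.17]{CatCon}, so no new work is required beyond citing these. Everything else is a direct appeal to the already-proved proposition.
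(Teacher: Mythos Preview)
Your proposal is correct and matches the paper's intent: the paper gives no separate proof of the corollary, remarking only that it is ``more or less a rephrasing'' of \cref{prop:interpreting-infinite-discrete-structure-characterization}, and your argument spells out exactly that rephrasing---identifying ``interprets a discrete theory with infinite models'' with ``has an infinite crisp imaginary sort'' and then invoking parts~1, 2, and 3 of the proposition for the two directions.
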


 One thing to note about \cref{prop:dict-then-interp} is that, together with \cref{thm:crisp-imaginary-char-1-types}, it implies that if a dictionaric theory interprets an infinite discrete structure, then it has a one-dimensional interpretation of an infinite discrete structure (i.e., an infinite discrete definable subset of a quotient of the home sort). It seems unlikely that this is always true, but we do not at present know an example.

 \begin{quest}
   Does there exist a theory that interprets an infinite discrete structure but does not have a one-dimensional interpretation of an infinite discrete structure?
 \end{quest}
 
\subsection{An example separating the two notions}
\label{sec:separating}

\begin{defn}\label{defn:counterexample-structure}
  Let $\Lc_{\cx}$ be the language with a $[0,1]$-valued metric and a constant symbol $\ast$. Let $M_{\cx}$ be the $\Lc_{\cx}$-structure whose universe is $[0,1]$ with $d(x,y) = \max(x,y)$ for $x\neq y$ and $\ast^M = 0$. Let $T_\cx=\Th(M_\cx)$. 
\end{defn}

It is immediate that $T_\cx$ has a non-algebraic crisp type, namely the type of $1 \in M_\cx$ over the empty set. %

\begin{lem}\label{lem:heart-basic}\
  \begin{enumerate}
    \begin{sloppypar}
    \item An $\Lc_\heartsuit$-structure $M$ is a model of $T_\heartsuit$ if and only if $\{d(x,\ast^M):x \in M\}$ is dense in $[0,1]$ and for any distinct $a$ and $b$ in $M$, $d(a,b) = \max(d(a,\ast^M),d(b,\ast^M))$.
    \end{sloppypar}
  \item Every non-algebraic type over any set of parameters is minimal. In particular, $T_\heartsuit$ is weakly minimal and therefore superstable.
  \item $T_\cx$ has quantifier elimination.
  \item Any $n$-type $p(\xbar) \in S(A)$ is uniquely determined by the values $d(x_i,\ast)$ for $i<n$, the values of $d(x_i,x_j)$ for $i<j<n$, and the values of $d(x_i,a)$ for $i<n$ and $a\in A$.
  \item For each $n<\omega$, the type space $S_n(T_\heartsuit)$ is locally path-connected.
  \end{enumerate}
\end{lem}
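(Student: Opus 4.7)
The plan is to prove (3), (4), and the nontrivial direction of (1) in one shot via a short back-and-forth argument, then derive (2) and (5) as consequences. For the forward direction of (1), both listed properties hold in $M_\cx$ by inspection and are expressible by continuous axiom schemas: density of $\{d(x,\ast) : x \in M\}$ via $\inf_x |d(x,\ast) - r| = 0$ for each rational $r \in [0,1]$; the inequality $d(x,y) \leq \max(d(x,\ast), d(y,\ast))$ via $\sup_{x,y} \max(0,\, d(x,y) - \max(d(x,\ast), d(y,\ast))) = 0$; and the dichotomy $d(x,y) > 0 \To d(x,y) \geq \max(d(x,\ast), d(y,\ast))$ via $\sup_{x,y} d(x,y) \cdot \max(0,\, \max(d(x,\ast), d(y,\ast)) - d(x,y)) = 0$ (whose conjunction with the previous axiom forces equality whenever $d(x,y) > 0$). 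The key observation in any structure satisfying these axioms is that the type of an element $a$ over a set $A$ is determined by $r \coloneqq d(a,\ast)$ together with the set of $c \in A \cup \{\ast\}$ satisfying $a = c$, since all other distances $d(a,c)$ are forced to $\max(r, d(c,\ast))$.

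A standard back-and-forth now goes through in any sufficiently saturated model: given tuples $\abar, \abar'$ with matching pairwise distances and matching distances to $\ast$ and to every $c \in A$, and any new element $b$, set $b'$ to the corresponding element of $A \cup \{\ast\} \cup \abar'$ when $b$ equals some element of $A \cup \{\ast\} \cup \abar$, and otherwise use density and saturation to find a fresh $b'$ with $d(b',\ast) = d(b,\ast)$ distinct from everything already chosen. This immediately yields (4), gives (3) by the standard translation from back-and-forth on quantifier-free types to quantifier elimination, and proves completeness of the theory axiomatized above, closing (1).

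For (2), by (4) a non-algebraic complete $1$-type $p(x) \in S_x(A)$ is determined by the single real $r \coloneqq d(x,\ast)$ (together with the stipulation that $x \neq c$ for every $c \in A \cup \{\ast\}$). Over any extension $A' \supseteq A$, the extensions of $p$ are therefore just the unique non-algebraic extension (same $r$) together with, for each $c \in A' \setminus A$ with $d(c,\ast) = r$, the algebraic extension $x = c$. Hence $p$ has SU-rank $1$, so it is minimal, $T_\cx$ is weakly minimal, and in particular superstable. For (5), the description (4) identifies $S_n(T_\cx)$ with pairs $((r_i)_{i<n}, \sim)$ where $r_i \in [0,1]$ and $\sim$ is an equivalence relation on $\{0,\ldots,n-1\}$ compatible with $(r_i)$ (so $i \sim j$ forces $r_i = r_j$, and all $i$ with $r_i = 0$ lie in one $\sim$-class), with $d(x_i,x_j) = 0$ if $i \sim j$ and $\max(r_i, r_j)$ otherwise. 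To exhibit path-connectedness of a small logic-topology neighborhood $U$ of a type $p$, for $q \in U$ linearly interpolate the $r_i$ from $q$ to $p$ while preserving the $\sim$-status of each pair on which $p$ and $q$ already agree; for pairs $(i,j)$ where $\sim_p$ and $\sim_q$ disagree, nearness of $q$ to $p$ forces $r_i^p, r_j^p, r_i^q, r_j^q$ all to be small, so we route the $(r_i,r_j)$-sub-path through $(0,0)$, at which point both $x_i$ and $x_j$ are pinned to $\ast$ and the transition of $\sim$-status is automatic.

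The delicate step is (5): verifying that the bridging segment through $(r_i, r_j) = (0,0)$ gives a continuous path in the logic topology. By quantifier elimination this reduces to continuity of $d(x_i,\ast)(t) = r_i(t)$ and of $d(x_i,x_j)(t)$ as functions of $t$, the latter of which equals $\max(r_i(t), r_j(t))$ on the $\not\sim$-portion and $0$ on the $\sim$-portion, matching the value $0$ at the transition point.
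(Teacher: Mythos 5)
Your proposal is correct and rests on the same underlying observation as the paper's proof: that types in $T_\heartsuit$ are determined by distances to $\ast$ together with the equality pattern and membership in $A$, which one establishes by a back-and-forth/automorphism argument. The dependency structure is reorganized (you go (1)$\to$(3),(4)$\to$(2), where the paper proves (2) first via automorphisms and derives (3),(4) from that argument), but the content is the same. Your fleshed-out argument for (5), which the paper dispatches with ``5 follows from 4,'' is in the right spirit; the one imprecision worth noting is the claim that the path ``preserv[es] the $\sim$-status of each pair on which $p$ and $q$ already agree'' — this fails at the bridge point for agreeing $\not\sim$-pairs $(i,k)$ when both $i$ and $k$ lie in small $\sim_p$-classes, since $r_i = r_k = 0$ forces $i\sim k$ momentarily. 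This does not break the proof: the cleanest fix is to take $S$ to be the union of all $\sim_p$-classes with $r^p$ below a threshold $\delta$, shrink all $r_i$ for $i\in S$ to $0$ in the first half and regrow them in the second (so that $S$ is simultaneously a union of $\sim_p$-classes and, for $q$ close to $p$, of $\sim_q$-classes), and observe as you do in your final paragraph that $d(x_i,x_j)(t)$ remains continuous through $t=1/2$ because $\max(r_i(t),r_j(t))\to 0$ there regardless of which $\sim$-status is declared on either side.
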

\begin{proof}
  For 1, first note that for any formula $\varphi(x)$ and any closed $F \subseteq \Rb$, there is a continuous first-order theory that is satisfied by a structure $M$ if and only if $\{\varphi^M(x) : x \in M\}$ is dense in $F$. Secondly, note that the second condition is clearly satisfied by any ultrapower of $M_\heartsuit$. Therefore if $M \models T_\heartsuit$, then there is an ultrapower of $M_\heartsuit$ into which $M$ elementarily embeds, implying that for any distinct $a$ and $b$ in $M$, $d(a,b) = \max(d(a,\ast^M),d(b,\ast^M))$. Conversely, assume that $M$ is an $\Lc_\heartsuit$-structure satisfying these conditions. It's easy to see that there are winning strategies for the `approximate Ehrenfeucht-\Fraisse\ games' between $M_{\heartsuit}$ and $M$ (i.e., for a given $\e > 0$ fixed in advanced, duplicator wins if the predicates agree to within $\e$ on the finite partial map from $M$ to $M_\heartsuit$ built in the game). This is enough to establish that $M \equiv M_\heartsuit$. 

  For 2, given $A\subseteq N$, it is clear that for any $r \in (0,1]$, any permutation of $\{b \in N\setminus A : d(b,\ast) = r\}$ extends to an automorphism of $N$ fixing $A$ pointwise (namely, the automorphism that fixes the rest of $N$). This implies that $T_\heartsuit$ is weakly minimal and therefore superstable.

  For 3, note that the argument for $2$ implies that for any $n$-tuple $\abar \in N \succeq M$, $\tp(\abar/M)$ is uniquely determined by
  the indices $i<n$ for which $a_i \in M$,
  the indices $i<j<n$ for which $a_i = a_j$, and
  the values of $d(a_i,\ast)$ for each $i<n$. 
  This implies that quantifier-free types over $M$ determine full types over $M$. Since we can do this for any $M \models T_\heartsuit$, we have that $T_\heartsuit$ has quantifier elimination.

  Finally, 4 is clear from quantifier elimination and 5 follows from 4.
\end{proof}

\begin{lem}\label{lem:no-parameters}
  If $T_\heartsuit$ interprets a discrete theory with infinite models, then it does so without parameters.
\end{lem}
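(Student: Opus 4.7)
The plan is to use quantifier elimination and the rigid ultrametric structure of $T_\heartsuit$ to substitute the parameters away, invoking induction on the arity when the naive substitution is not immediately sufficient. By the preceding corollary, the hypothesis supplies restricted formulas $\varphi(\bar x,\bar p)$ and $E(\bar x,\bar y,\bar p)$ satisfying the three bullets, where $\bar p=(p_0,\dots,p_{m-1})$ is a tuple from the monster and $\varphi$, $E$ are themselves parameter-free restricted formulas; I want to produce such formulas with $\bar p$ empty. By quantifier elimination (\cref{lem:heart-basic}), $\varphi$ and $E$ may be taken to be continuous combinations of distance atoms, so $\bar p$ appears only through atoms of the form $d(x_i,p_k)$ or $d(y_i,p_k)$ and through the numerical constants $d(p_k,p_\ell)$ and $d(p_k,\ast)$. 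After absorbing any $p_k$ equal to $\ast$, assume each $r_k:=d(p_k,\ast)>0$, and by \cref{lem:heart-basic} one has $d(x_i,p_k)=\max(d(x_i,\ast),r_k)$ whenever $x_i\neq p_k$. Let $\tilde\varphi(\bar x)$ and $\tilde E(\bar x,\bar y)$ be the $\emptyset$-formulas obtained by replacing each atom $d(x_i,p_k)$ (resp.\ $d(y_i,p_k)$) by $\max(d(x_i,\ast),r_k)$ (resp.\ $\max(d(y_i,\ast),r_k)$) and each constant atom by its numerical value. Call a tuple \emph{generic} if none of its coordinates equals any $p_k$; on generic tuples $\tilde\varphi$ and $\tilde E$ agree with $\varphi(\cdot,\bar p)$ and $E(\cdot,\cdot,\bar p)$.

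The proof proceeds by strong induction on $n=|\bar x|$. The dichotomy is whether the generic tuples in $\{\bar a:\varphi(\bar a,\bar p)<1\}$ realize infinitely many $E$-classes. In the affirmative case, saturation---one can always realize a prescribed quantifier-free $d$-type while avoiding the finite set $\{p_0,\dots,p_{m-1}\}$---reduces verification of the bullets for $\tilde\varphi,\tilde E$ to the corresponding statements for $\varphi,E$ on generic tuples, which hold by hypothesis. In the negative case, by pigeonhole some nonempty pattern $\pi\colon D\to\{0,\dots,m-1\}$ with $D\subsetneq\{0,\dots,n-1\}$ has infinitely many $E$-classes among pattern-$\pi$ tuples; substituting the frozen coordinates $x_i=p_{\pi(i)}$ into $\varphi$ and $E$ yields an interpretation over $\bar p$ with strictly fewer free variables, to which the inductive hypothesis applies. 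The base case $n=1$ is automatic because the only non-generic patterns fix $x_0=p_k$ and yield singleton solution sets, hence at most one $E$-class each.

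The step I expect to be the main obstacle is bullet 3 for $\tilde\varphi,\tilde E$ in the generic-is-infinite branch: given $\bar a$ with $\tilde\varphi(\bar a)<1$, one must produce $\bar b$ with $\tilde\varphi(\bar b)=0$ and $\tilde E(\bar a,\bar b)=1$. The plan is to pick a generic $\bar a^*$ with the same self-$d$-data as $\bar a$ (so $\varphi(\bar a^*,\bar p)=\tilde\varphi(\bar a)<1$), apply the original bullet 3 to get some $\bar b^*$ in the $E$-class of $\bar a^*$ with $\varphi(\bar b^*,\bar p)=0$, and then use saturation to replace $\bar b^*$ by a generic $\bar b^{**}$ realizing the same joint $d$-data with $\bar a^*$; since $\tilde E$ and $\tilde\varphi$ depend only on this $d$-data, $\bar b^{**}$ will serve. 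The worry is that the original bullet 3 witness might be essentially non-generic, with no generic replacement available in the $E$-class; but this contingency is exactly what drives the pattern-restriction branch of the induction, closing the argument.
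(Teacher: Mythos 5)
Your strategy is genuinely different from the paper's. The paper sidesteps substitution entirely: it takes the finite parameter tuple $\cbar$, forms the structure $M\sqcup\ebar$ (a model $M$ of $T_\heartsuit$ together with named elements $\ebar$ of pairwise distance $1$), observes that $T_\heartsuit$ interprets $\Th(M\sqcup\ebar)$ without parameters, and then has $M\sqcup\ebar$ interpret $(T_\heartsuit)_{\cbar}$ via a redefined metric in which $\ebar$ encodes the numerical data $d(c_i,\ast)$. Transitivity of interpretation then gives a parameter-free interpretation of $(T_\heartsuit)_{\cbar}$ and hence of the original discrete theory. You instead try to eliminate the parameters syntactically via quantifier elimination and an induction on arity, which in principle is a more elementary route and doesn't invoke the general machinery of interpretations of constant-expansions.

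However, your proof as written has a genuine gap, and it is precisely the one you flag but then dismiss. The dichotomy you set up is on whether \emph{generic} tuples with $\varphi<1$ realize infinitely many $E$-classes. But the obstruction to bullet~3 for $\tilde\varphi,\tilde E$ is a different question: whether the $E$-classes that matter each contain a \emph{generic} tuple with $\varphi=0$. These can come apart. It is perfectly consistent with your Case~A hypothesis (infinitely many $E$-classes among generic $\varphi<1$ tuples) that the only $\varphi=0$ witness guaranteed by the original bullet~3 in some (or all) of those classes is a frozen tuple with a coordinate equal to some $p_k$; in that event $\tilde\varphi$ of that witness need not be $0$, the generic replacement you describe yields a tuple whose $\tilde\varphi$-value equals $\tilde\varphi$ of the frozen witness rather than $0$, and nothing sends you to the pattern-restriction branch, since the case split never looks at $\varphi=0$ witnesses. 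You assert "this contingency is exactly what drives the pattern-restriction branch," but it does not: Case~B is entered only when generic $E$-classes are finite in number, which is a different condition.

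There is a second, related problem inside the pattern-restriction branch itself. After freezing coordinates via $\pi$ and substituting, you need the resulting formulas $\varphi_\pi$, $E_\pi$ (in fewer variables) to satisfy bullet~3 again: every $E_\pi$-class of some pattern-$\pi$ tuple with $\varphi_\pi<1$ must contain a pattern-$\pi$ tuple with $\varphi_\pi=0$. The original bullet~3 only guarantees a $\varphi=0$ witness somewhere in the unrestricted $E$-class, and that witness may well not conform to $\pi$. So the inductive hypothesis may not legitimately apply to the restricted pair. To repair the argument you would need to recast the dichotomy in terms of how many $E$-classes admit a generic (or $\pi$-patterned) $\varphi=0$ witness, prove that the pigeonhole works for that refined notion, and verify all three bullets for the restricted formulas---none of which is automatic. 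As it stands, the induction is not set up to terminate correctly.
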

\begin{proof}
  By \cref{prop:interpreting-infinite-discrete-structure-characterization}, we know that if $T_\heartsuit$ interprets a discrete theory with infinite models, then it has an infinite crisp imaginary sort $I$ definable over some finite tuple $\cbar$ of parameters. Let $M\sqcup \ebar$ be a structure consisting of a model $M$ of $T_\heartsuit$ together with a tuple of new elements $\ebar$ of the same size as $\cbar$ with pairwise distance $1$. Suppose furthermore $M \sqcup \ebar$ has constants for the elements of $\ebar$. $M\sqcup \ebar$ interprets $(T_\heartsuit)_{\cbar}$ by defining a new metric $d'$ on its home sort given by the ordinary metric on $M$, $d'(x,e_i) = |d(x,\ast)-d(c_i,\ast)|$ for each $x \in M$, and $d'(e_i,e_j) = d(c_i,c_j)$. By \cite[Prop.~3.4.1, 3.4.3]{HansonThesis},\footnote{The definition of imaginary sort used in \cite{HansonThesis} is slightly more permissive than the standard definition, but it is equivalent for non-trivial theories, specifically theories whose models have more than one element in some sort \cite[Rem.~3.2.9]{HansonThesis}.} $T_\heartsuit$ interprets $\Th(M\sqcup\ebar)$ without parameters, so we have that $T_\heartsuit$ interprets $(T_\heartsuit)_{\cbar}$ for any finite tuple $\cbar$ of parameters.
\end{proof}

\begin{lem} \label{lem:acl-base} %
  If $I$ is a crisp imaginary sort of $T_\heartsuit$ definable over $\varnothing$, then for every $a \in I$, there is a unique smallest finite set $\bbar$ in the home sort such that $a \in \dcl(\bbar)$ and $\bbar \in \acl(a)$.
\end{lem}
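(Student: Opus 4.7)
The plan is to prove that $\bbar := \acl(a) \cap H \setminus \{\ast\}$ is the unique smallest such set, where $H$ is the home sort. Finiteness of $\bbar$ follows quickly: by \cref{prop:interpreting-infinite-discrete-structure-characterization}(3) applied to the $\varnothing$-definable crisp sort $I$, there are restricted formulas $\varphi, E$ in finitely many variables realizing $I$, so $a$ corresponds to the $E$-class of some finite tuple $\cbar \in H^k$ with $a \in \dcl(\cbar)$. By the weak minimality of $T_\heartsuit$ (\cref{lem:heart-basic}(2)), $\acl(\cbar) \cap H = \cbar \cup \{\ast\}$, whence $\bbar \subseteq \cbar$ is finite.

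The main step is to show $a \in \dcl(\bbar)$. Fix $\sigma \in \Aut(\M/\bbar)$, let $\cbar_A$ be the subtuple of $\cbar$ consisting of entries in $\bbar \cup \{\ast\}$, and $\cbar_N$ the complementary subtuple (so $\sigma$ fixes $\cbar_A$ pointwise). I would first prove a \emph{replacement lemma}: for any entry $c_j$ of $\cbar_N$ and any $c_j'$ at distance $d(c_j,\ast)$ from $\ast$ that is distinct from $\bbar$, from $\ast$, and from the other entries of $\cbar$, the tuple obtained from $\cbar$ by replacing $c_j$ with $c_j'$ is $E$-equivalent to $\cbar$. This uses that $c_j \notin \acl(a)$: by weak minimality of $T_\heartsuit$ together with the explicit description of types in \cref{lem:heart-basic}(4), the non-algebraic type at distance $d(c_j,\ast)$ over $\bbar \cup \{a\} \cup \{c_i : i \neq j\}$ is unique, so $c_j$ and $c_j'$ are $\Aut(\M/\bbar \cup \{a\} \cup \{c_i : i \neq j\})$-conjugate, which transfers the relation ``$\cbar$ lies in the $E$-class of $a$'' to the replaced tuple.

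Given the replacement lemma, I would connect $\cbar$ to $\sigma(\cbar)$ by a chain of single-coordinate replacements within $\cbar_N$, inserting fresh intermediate elements as needed to avoid collisions (for instance when $\sigma$ permutes two entries of $\cbar_N$ at the same distance level). Transitivity of the equivalence relation $E$ then yields $\sigma(a) = a$, establishing $a \in \dcl(\bbar)$.

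For uniqueness and minimality, let $\bbar'$ be any finite set satisfying $a \in \dcl(\bbar')$ and $\bbar' \in \acl(a)$. The latter forces $\bbar' \subseteq \bbar \cup \{\ast\}$. If some $b^* \in \bbar \setminus \bbar'$, then since $b^* \in \acl(a)$ and $a \in \dcl(\bbar')$ gives $\Aut(\M/\bbar') \leq \Aut(\M/a)$, the orbit of $b^*$ under $\Aut(\M/\bbar')$ is finite. But by \cref{lem:heart-basic}(2), $\Aut(\M/\bbar')$ acts as the full symmetric group on $\{x \in \M : d(x,\ast) = d(b^*,\ast)\} \setminus \bbar'$, making this orbit infinite---a contradiction. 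So $\bbar' \supseteq \bbar$, and the unique smallest is $\bbar$. The main obstacle will be the replacement lemma and the subsequent chain argument; the uniqueness half then follows readily from the explicit product structure of $\Aut(\M)$.
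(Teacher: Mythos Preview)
Your approach is different from the paper's and morally correct, but the replacement lemma has a real gap in its justification. You claim that from $c_j\notin\acl(a)$ it follows that $c_j$ and a fresh $c_j'$ are conjugate over $\bbar\cup\{a\}\cup\{c_i:i\neq j\}$. Weak minimality and \cref{lem:heart-basic}(4) do give you a \emph{unique} non-algebraic type at level $d(c_j,\ast)$ over that base, and the fresh $c_j'$ realizes it (since $c_j'\notin\acl(\cbar)\supseteq\acl(a,c_{\neq j})$). But you also need $c_j$ itself to realize it, i.e.\ $c_j\notin\acl(a,c_{\neq j})$, and this does \emph{not} follow from $c_j\notin\acl(a)$ by weak minimality alone---that would amount to triviality of the pregeometry over imaginary bases, which is a separate fact. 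In $T_\heartsuit$ this triviality does hold, and one can extract it from quantifier elimination together with transitivity of $E$: by QE the value $E((c_1,\dots,d,\dots,c_n),\cbar)$ is constant as $d$ ranges over fresh elements at the right level, so the fiber over $c_{\neq j}$ is either $\{c_j\}$ or cofinite; a short transitivity argument then rules out the singleton case when $c_j\notin\acl(a)$. But you have not supplied this, and your citation of weak minimality does not cover it.

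The paper avoids this issue entirely by never reasoning about $\acl(a)$. Instead it proves an intersection property: if $a\in\dcl(\bbar\ebar)\cap\dcl(\bbar\fbar)$ with $\ebar\cap\fbar=\varnothing$, then $a\in\dcl(\bbar)$. The point is that any automorphism fixing $\bbar\ebar$ \emph{or} fixing $\bbar\fbar$ automatically fixes $a$, and an arbitrary automorphism over $\bbar$ taking a tuple $\gbar$ to a conjugate $\hbar$ can be factored as a composition of such automorphisms (moving pieces one at a time while holding either $\ebar$ or $\fbar$ fixed). The minimal defining set is then obtained by intersecting, and its invariance under $\Aut(\M/a)$ gives $\bbar\in\acl(a)$. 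This argument only uses the product structure of $\Aut(\M)$ over real parameters and never needs to understand $\acl$ over the imaginary $a$. Your route is more direct when it works, but requires the extra triviality-over-imaginaries input; the paper's route trades directness for robustness.
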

\begin{proof}
  (This is almost identical to the proof of the analogous fact for the theory of the infinite set.)
  
  First note that by \cref{prop:interpreting-infinite-discrete-structure-characterization}, we may assume that $I$ is a definable subset of a quotient of a finite power of the home sort. For any $a \in I$, there is some finite $\bbar$ such that $a \in \dcl(\bbar)$. We may clearly assume that $\ast \notin \bbar$.

  Now assume that $\bbar\ebar$ and $\bbar\fbar$ are two different finite tuples of parameters such $\ebar \cap \fbar = \varnothing$ and $a \in \dcl(\bbar\ebar)\cap \dcl(\bbar\fbar)$. We need to show that $a \in \dcl(\bbar)$. To do this it will be sufficient to show that if $\gbar$ and $\hbar$ are two tuples of elements realizing the same type over $\bbar$, then there is an automorphism $\sigma \in \Aut(\M/\bbar)$ taking $\gbar$ to $\hbar$ and fixing $a$. Clearly we may assume that $\gbar \cap \bbar = \hbar \cap \bbar = \varnothing$.

  Let $\gbar_0 = \gbar \cap \ebar$, $\gbar_1 = \gbar \cap \fbar$, and $\gbar_2 = \gbar \setminus (\ebar\fbar)$. Let $\hbar_0\hbar_1\hbar_2$ be a partitioning of $\fbar$ satisfying that $\gbar_0\gbar_1\gbar_2 \equiv_{\bbar}\hbar_0\hbar_1\hbar_2$.  Let $\gbar' = \gbar'_0\gbar'_1\gbar'_2$ be a realization of $\tp(\gbar_0\gbar_1\gbar_2 / \bbar)$ satisfying that $\gbar'_0\gbar'_1\gbar'_2 \ind_{\bbar}\ebar\fbar\gbar\hbar$ (i.e., that any elements in common between $\gbar'$ and $\ebar\fbar\gbar$ are contained in $\bbar$). 
  
  By \cref{lem:heart-basic}, we have that $\{\gbar'_0,\gbar'_1,\gbar'_2,\ebar\gbar_0,\fbar\gbar_1,\gbar_2\}$ is pairwise independent over $\bbar$, implying that we can find an automorphism $\sigma_0 \in \Aut(\M/\bbar\ebar)$ such that $\sigma_0(\gbar_1) = \gbar'_1$, $\sigma_0$ fixes everything in  $\{\gbar'_0,\gbar'_2,\ebar\gbar_0,\gbar_2\}$, and $\{\gbar'_0,\gbar'_1\sigma_0(\fbar),\gbar'_2,\ebar\gbar_0,\gbar_2\}$ is still independent over $\bbar$. We can then find $\sigma_1 \in \Aut(\M/\bbar \cup \sigma_0(\fbar))$ such that $\sigma_1(\gbar_0)=\gbar'_0$, $\sigma_1(\gbar_2) = \gbar'_2$, and $\sigma_1$ fixes $\gbar'_1$.

  We then have that $\sigma_1(\sigma_0(\gbar)) = \gbar'$ and $\sigma_1\circ \sigma_0$ fixes $a$. By the symmetric argument, we can find an automorphism $\tau \in \Aut(\M/\bbar)$ taking $\hbar$ to $\gbar'$, so we get that $\tau^{-1}\circ \sigma_1\circ \sigma_0$ is the required automorphism.

  Since we can do this for any $\bbar\ebar$ and $\bbar\fbar$, we get that $a$ is definable over the intersection of all such finite tuples. Let $\bbar$ be this minimal tuple. Since the set $\{b_0,\dots,b_{n-1}\}$ is fixed by any automorphism fixing $a$ and since $\bbar$ is a finite tuple, we have that $\bbar \in \acl(a)$.
\end{proof}

\begin{prop}\label{prop:heart-no-interpret}
  $T_\heartsuit$ does not interpret a discrete theory with infinite models. In particular, there is a superstable theory with a non-algebraic crisp type that does not interpret a discrete theory with infinite models.
\end{prop}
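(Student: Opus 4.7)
The plan is to argue by contradiction. Suppose $T_\heartsuit$ interprets a discrete theory with infinite models. By \cref{prop:interpreting-infinite-discrete-structure-characterization} combined with \cref{lem:no-parameters}, $T_\heartsuit$ then has an infinite crisp imaginary sort $I$ definable over $\varnothing$, and \cref{lem:acl-base} assigns to each $a \in I$ a unique minimal base $\bbar(a)$, a finite tuple in the home sort disjoint from $\ast$.

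I would begin by picking infinitely many pairwise distinct elements $a_0, a_1, \ldots \in I$ and running a standard battery of combinatorial reductions. Finite pigeonhole makes $|\bbar(a_i)| = n$ constant; compactness of $S_n(T_\heartsuit)$ makes $\tp(\bbar(a_i)/\varnothing)$ constant; the $\Delta$-system lemma arranges $\bbar(a_i) = \cbar \cup \dbar_i$ with a common kernel $\cbar$ and pairwise disjoint stems $\dbar_i$; and a Ramsey argument, using the stability established in \cref{lem:heart-basic}(2), makes $(\dbar_i)_{i<\omega}$ totally $\cbar$-indiscernible. Fixing a $\varnothing$-definable function $f$ with $a_0 = f(\cbar \dbar_0)$, I then replace each $a_i$ by $f(\cbar \dbar_i)$, placing all the $a_i$ in a single $\Aut(\M/\cbar)$-orbit while keeping them pairwise distinct, since their minimal bases still differ.

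For the final contradiction I will pit the extreme homogeneity of $T_\heartsuit$ over $\cbar$ --- \cref{lem:heart-basic} shows that $\Aut(\M/\cbar)$ realizes every height-preserving permutation of $\M \setminus \cbar$ --- against the algebraic rigidity from \cref{lem:acl-base}, namely the uniqueness of the minimal base of $a_0$. Using the homogeneity argument from the proof of \cref{lem:acl-base}, I can produce automorphisms $\sigma \in \Aut(\M / \cbar \dbar_0)$ that fix $a_0$ and send $\dbar_i \to \dbar_j$ for any desired $i \neq j$ (hence $a_i \to a_j$); combining these with the combinatorial permutations available on $\M \setminus \cbar$, I aim to build a single automorphism simultaneously forced to fix $a_0$ (by the minimal-base constraint) and to send $a_0$ to some $a_k$ with $k \neq 0$ (by the forced rearrangement of coordinates), yielding $a_0 = a_k$ and contradicting pairwise distinctness.

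The main obstacle will be making this final mixing step rigorous. The naive uniform-continuity argument cannot work, since level sets in $T_\heartsuit$ are themselves uniformly discrete, so one cannot easily glide between tuples along small metric perturbations. Instead, the argument must carefully track how the definable function $f$ interacts with arbitrary independent permutations within each level, and then invoke the uniqueness of the minimal base from \cref{lem:acl-base} to force the desired collision between two a priori distinct $a_i$'s.
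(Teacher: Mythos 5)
Your opening is identical to the paper's: reduce via \cref{prop:interpreting-infinite-discrete-structure-characterization} and \cref{lem:no-parameters} to a parameterless crisp imaginary sort $I$, and invoke \cref{lem:acl-base} to get unique minimal bases. After that you diverge sharply: where the paper defines ``critical pairs,'' takes the infimum $r$ of their distances, and uses a scaling map $s \mapsto q_s$ to produce a critical pair of distance $sr < r$, you propose a purely combinatorial route (pigeonhole, $\Delta$-system lemma, Ramsey, total indiscernibility over the kernel $\cbar$, and then an automorphism-mixing contradiction).

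There is a genuine gap, and it is not just the technical one you flag at the end. The whole strategy after the reduction is combinatorial in a way that cannot distinguish $T_\heartsuit$ from a theory that \emph{does} interpret an infinite discrete structure. Consider the discrete theory of an infinite set: it has crisp imaginary sorts (e.g.\ unordered $n$-element subsets) whose elements have unique minimal bases in the home sort, and your entire battery of reductions --- pigeonhole on $|\bbar(a_i)|$, constancy of type, $\Delta$-system with disjoint stems, Ramsey to get $\cbar$-indiscernibility, and a $\varnothing$-definable $f$ with $a_i = f(\cbar\dbar_i)$ --- goes through verbatim there. So whatever contradiction your final step extracts must use something that is true of $T_\heartsuit$ but false of the infinite set. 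Nothing in your sketch does. In particular, your proposed mixing step (``build a single automorphism simultaneously forced to fix $a_0$ \dots\ and to send $a_0$ to some $a_k$'') cannot be made to work: an automorphism is a function, and the constraint from \cref{lem:acl-base} is only that an automorphism fixing $a_0$ permutes the finite set $B_{a_0}$, not that it fixes $\cbar\dbar_0$ pointwise while simultaneously moving it.

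The property of $T_\heartsuit$ that the paper's proof actually leans on is the \emph{density} of the height set in $[0,1]$, combined with the \emph{logic-topology} continuity of the (quantifier-free, restricted) formulas $\varphi$ and $E$ guaranteed by parts 3--4 of \cref{prop:interpreting-infinite-discrete-structure-characterization}. The scaling map $s \mapsto q_s$ produces a continuous path of types in $S_{2n}(T_\heartsuit)$ (converging to $q$ as $s \to 1^-$), which is how the paper drives the infimum $r$ of critical-pair distances below itself. You correctly observe that one cannot glide between tuples along small metric perturbations --- the $\partial$-metric on type space is uniformly discrete within a fixed height. But the paper's argument never glides in the $\partial$-metric; it glides in the compact logic topology on $S_{2n}$, where nearby heights give nearby types even though they are $\partial$-far. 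Dismissing ``uniform-continuity'' here conflates the two topologies, and it is precisely the logic-topology continuity that you have discarded. Without reintroducing it (or something equivalent that uses the metric density), the argument cannot close.
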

\begin{proof}
  By \cref{prop:interpreting-infinite-discrete-structure-characterization} and \cref{lem:no-parameters}, if $T_\heartsuit$ interprets a discrete theory with infinite models, then there is a pair of quantifier-free restricted formulas $\varphi(\xbar)$ and $E(\xbar,\ybar)$ such that $E(\xbar,\ybar)$ is a $\{0,1\}$-valued equivalence relation on the set of tuples $\abar$ satisfying $\varphi(\abar)< 1$ with infinitely many classes such that each $E$-class contains an $\abar$ with $\varphi(\abar)  = 0$. Let $n = |\xbar|$.

  Let $I$ be the imaginary sort corresponding to $\varphi(\xbar)$ and $E(\xbar,\ybar)$. Let $\pi:H^n\to I$ be the partial function taking each tuple $\abar$ satisfying $\varphi(\abar)< 1$ to the corresponding element of $I$. Note that $\pi(\abar) \in \dcl(\abar)$. By \cref{lem:acl-base}, we have that for any $\abar$ satisfying $\varphi(\abar) < 1$, there is a unique minimal set $B_{\abar} \subseteq \abar$ such that $\pi(\abar) \in \dcl(B_{\abar})$ and $B_{\abar} \subseteq \acl(\pi(\abar))$. For any $\abar$ satisfying $\varphi(\abar) < 1$, let $I_{\abar} \subseteq n = \{0,1,\dots,n-1\}$ be the set of indices such that $a_i \in B_{\abar}$.

  Call an ordered pair $\langle \abar,\bbar \rangle$ of elements \emph{critical} if $\varphi(\abar)<1$, $\varphi(\bbar) < 1$, $\abar \equiv \bbar$, $|B_{\abar}\setminus B_{\bbar}| = 1$, and $a_i = b_i$ for all $i \in n \setminus I_{\abar}$. (Note that by \cref{lem:acl-base}, critical pairs exist.) Given a critical pair $\langle \abar,\bbar \rangle$, let $c_{\abar,\bbar}$ be the unique element of $B_{\abar} \setminus B_{\bbar}$. Note that for any critical pair $\langle \abar,\bbar \rangle$, $d(\abar,\bbar) = d(c_{\abar,\bbar},\ast)$. Also note that for any critical pair $\langle \abar,\bbar \rangle$, $E(\abar,\bbar) = 0$ (otherwise $B_{\abar}$ and $B_{\bbar}$ would not be minimal). Finally, note that if $\langle \abar,\bbar \rangle$ is a critical pair, then $\langle \bbar,\abar \rangle$ is a critical pair as well.

  Let $r = \inf\{d(\abar,\bbar):\langle \abar,\bbar \rangle~\text{a critical pair}\}$. Let $(p_k(\xbar,\ybar))_{k<\omega}$ be a sequence of $2n$-types of critical pairs satisfying $p_k(\xbar,\ybar) \vdash \varphi(\xbar) = 0 \wedge \varphi(\ybar)=0\wedge d(\xbar,\ybar) < r+2^{-k}$ for each $k<\omega$. By compactness, we may assume that the sequences is converging to some $2n$-type $q(\xbar,\ybar)$. Clearly we have that $q(\xbar,\ybar)\vdash \varphi(\xbar)=0\wedge \varphi(\ybar) = 0$. 

  $r$ cannot be $0$. To see this, assume that $r$ is $0$. This implies that $q(\xbar,\ybar) \vdash d(\xbar,\ybar) = 0$ and so $q(\xbar,\ybar) \vdash E(\xbar,\ybar) = 1$ (since $q(\xbar,\ybar)\vdash \varphi(\xbar) = 0 \wedge \varphi(\ybar)= 0$), which contradicts the fact that $E(\xbar,\ybar)$ is continuous on type space.

  \begin{sloppypar}
    For any $s \in (0,1)$, let $q_s(\xbar,\ybar)$ be the unique type satisfying $q_s(\xbar,\ybar) \vdash d(t_0,t_1) = 0$ if and only if $p(\xbar,\ybar)\vdash d(t_0,t_1) = 0$ for each pair of terms $t_0,t_1 \in \{x_0,\dots,x_{n-1},y_0,\dots,y_{n-1}\}$ and satisfying that $d(x_i,\ast)^{q_s} = s\cdot d(x_i,\ast)^p$ and $d(y_i,\ast)^{q_s}=s\cdot d(y_i,\ast)^p$ for each $i<n$ (where $\psi^q$ is the value of $\psi$ assigned by the type $q$).
  \end{sloppypar}
  Note that as $s$ approaches $1$, $q_s \to q$ topologically. Therefore, for sufficiently large $s < 1$, we have by continuity that $q_s(\xbar,\ybar)\vdash \varphi(\xbar) < 1 \wedge \varphi(\ybar) < 1$. Furthermore, for sufficiently large $s<1$, we have that $q_s(\xbar,\ybar) \vdash E(\xbar,\ybar) = 0$.

  For each $p_k(\xbar,\ybar)$, we have for each $i<j<n$ that
  \[
    p_k(\xbar,\ybar) \vdash d(x_i,y_i) =0 \vee d(x_j,y_j) =0 \vee (d(x_i,x_j) = 0 \wedge d(y_i,y_j) = 0), %
  \]
  which is a closed condition and therefore preserved under taking a limit. In particular, $q(\xbar,\ybar)$ satisfies the same thing. Furthermore, each $q_s(\xbar,\ybar)$ satisfies the same thing by construction. In particular, this implies that $q_s(\xbar,\ybar)$ is the type of a critical pair. Choose some $s < 1$ that is sufficiently large in the preceding senses. %
  Let $\abar\bbar$ be a realization of $q_s(\xbar,\ybar)$.

  By assumption, there must be some $\abar'$ satisfying $\varphi(\abar') = 0$ such that $E(\abar,\abar') = 1$. Furthermore, we can choose this $\abar'$ such that $\abar'\ind_{\abar} \bbar$ (i.e., such that $\abar' \cap \bbar \subseteq \abar$), implying that $b_{\bbar,\abar} \notin \abar'$. We need to find $\bbar'$ such that $\abar \abar' \equiv \bbar \bbar'$ and $\langle \abar',\bbar' \rangle$ is a critical pair. Define $\bbar'$ as follows: For each $i<n$, if $a'_i = c_{\abar,\bbar}$, let $b'_i = c_{\bbar,\abar}$. Otherwise, let $b'_i = a'_i$.

  By \cref{lem:heart-basic}, we have that $\abar\abar' \equiv \bbar\bbar'$ and so $\abar' \equiv \bbar'$ and therefore $\varphi(\bbar') = 0$. Furthermore, for each $i<n$, if $i \in n \setminus I_{\abar'}$, then $a'_i \neq c_{\abar,\bbar} \in C_{\abar'}$ and so $b'_i = a'_i$. Therefore we have that $\langle  \abar', \bbar' \rangle$ is a critical pair.

  Since $\langle \abar',\bbar' \rangle$ is a critical pair satisfying $\varphi(\abar') =\varphi(\bbar')=0$, we must have that $d(\abar',\bbar') = d(c_{\abar',\bbar'},\ast) \geq r$, but $d(c_{\abar',\bbar'},\ast) = sr < r$ by construction and since $C_{\abar} = C_{\abar'}$ and $C_{\bbar}=C_{\bbar'}$, we must have that $c_{\abar',\bbar'}=c_{\abar,\bbar}$, which is a contradiction.

  Therefore no such crisp imaginary sort can exist and $T_{\heartsuit}$ does not interpret a discrete theory with infinite models.
\end{proof}

\section{Generic binary predicates on $\Rb$-forests}
\label{sec:gen-bin-R-tree}%

Here we will give our example of a strictly simple theory that has no non-algebraic crisp imaginary types.

\subsection{The $\Rb$-forest for the $\Rb$-trees}
\label{sec:R-forest}%

The techniques used here are similar to those in \cite{Carlisle2020} and \cite{2021arXiv210613261H}, but neither of them are precisely applicable. Morally speaking, all of the continuous model theory results of this section are due to Carlisle and Henson in \cite{Carlisle2020}, which studied the model companion of the theory of rooted $\Rb$-trees of a given height. There the model companion was characterized as being the theory of \emph{richly branching} $\Rb$-trees. While a similar characterization of $\RF^\ast$ is possible, we won't need it for our purposes.

The results regarding $\Rb$-forests in this section are also direct adaptations of standard results in the $\Rb$-tree literature (such as those in \cite{Evans2008}), but these are often stated in terms of finite sets of points and for $\Rb$-trees in particular. Given these issues, we will give a partially self-contained presentation.

Recall than an $\Rb$-tree is a uniquely arc-connected complete metric space $(X,d)$ such that for any $x,y \in X$, the unique arc between $x$ and $y$, written $[x,y]$, is isometric to $[0,d(x,y)]\subseteq \Rb$.

\begin{defn}
  An \emph{$\Rb$-forest} is a complete extended metric space $(X,d)$ with the property that for any $x \in X$, $\{y \in X : d(x,y) < \infty\}$ is an $\Rb$-tree.
\end{defn}

For the sake of this section we will allow metrics to be extended (i.e., $[0,\infty]$-valued). Officially we could say that our metrics are $[0,1]$-valued by taking $\frac{d(x,y)}{1+d(x,y)}$ as our metric, but this is inconvenient and unnecessary to do in practice. As long as we are careful that the connectives we use with $d(x,y)$ are continuous on $[0,\infty]$, there is no issue. In particular, note that addition on $[0,\infty]$ defined in the standard way is continuous. 

In any extended metric space $X$, given $x \in X$, we will write $[x]_{<\infty}$ for the set $\{y \in X : d(x,y) < \infty\}$. %

\begin{fact}[{\cite[Thm.~3.38]{Evans2008}}]\label{fact:R-tree-embedding}
  A metric space $(X,d)$ can be isometrically embedded into an $\Rb$-tree if and only if it satisfies the \emph{4-point condition}: For all $x,y,z,w \in X$, $d(x,y)+d(z,w) \leq \max(d(x,z)+d(y,w),d(y,z)+d(x,w))$.
\end{fact}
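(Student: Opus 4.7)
The plan is to handle the two directions separately. The forward direction is a finite case analysis inside the given $\Rb$-tree; the backward direction requires constructing a tree from the metric data.

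For the forward direction, suppose $X$ embeds isometrically into an $\Rb$-tree $T$. Given $x,y,z,w \in X$, I would pass to the smallest sub-$\Rb$-tree $T_0 \subseteq T$ containing them. Since $\Rb$-trees are uniquely arc-connected, $T_0$ is a finite topological tree with at most four leaves and at most two internal branch points. I would then enumerate the possible combinatorial shapes (handling degenerate subcases where branches collapse), express each of the three sums $d(x,y)+d(z,w)$, $d(x,z)+d(y,w)$, $d(y,z)+d(x,w)$ as a sum of edge weights, and check directly that in each shape two of the three sums coincide and dominate the third. This immediately yields the 4-point inequality.

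For the backward direction, I would first handle finite $F \subseteq X$ satisfying the 4-point condition by induction on $|F|$, and then patch the finite pieces into a single $\Rb$-tree. In the inductive step, given a tree realization $T_F$ of $F$ and a new point $p \in X$, I would use Gromov products $(p \cdot a)_b = \frac{1}{2}(d(p,a)+d(p,b)-d(a,b))$ for $a,b \in F$ to locate the attachment point of $p$ on $T_F$; the 4-point condition is precisely what guarantees that these products are consistent across different choices of $(a,b)$, so there is a uniquely determined attachment point and $p$ can be adjoined by a new edge of the correct length. For arbitrary $X$, I would pass to a direct limit: choose minimal tree realizations $T_F$ for every finite $F \subseteq X$, observe that minimality yields canonical isometric inclusions $T_F \hookrightarrow T_{F'}$ whenever $F \subseteq F'$, take the direct limit of this system, and metrically complete to obtain an $\Rb$-tree $T$ containing an isometric copy of $X$.

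The main obstacle is the inductive step in the finite case: one must verify that the 4-point condition really does force a single coherent attachment point for the new $p$, rather than merely pairwise-compatible data. Concretely, for any three already-placed points $a,b,c \in F$, the candidate branch locations of $p$ along the segments $[a,b]$, $[a,c]$, and $[b,c]$ in $T_F$ must agree in the obvious tree-theoretic sense. This is exactly what the 4-point inequality applied to the quadruple $(a,b,c,p)$ delivers, but translating that inequality into a concrete tree construction—and then verifying that the resulting $T_{F \cup \{p\}}$ still satisfies the 4-point condition for the next inductive step—constitutes the bulk of the work.
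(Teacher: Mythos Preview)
The paper does not give its own proof of this statement: it is recorded as a \emph{Fact} and attributed to \cite[Thm.~3.38]{Evans2008}. So there is nothing in the paper to compare your argument against.

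That said, your sketch is a standard and essentially correct route to the result. Two small remarks. First, in the inductive step you write that one must verify $T_{F\cup\{p\}}$ ``still satisfies the 4-point condition for the next inductive step''; this is automatic, since $T_{F\cup\{p\}}$ is already an $\Rb$-tree by construction and you have already handled the forward direction. What you actually use at the next step is the 4-point condition on $X$, which is given by hypothesis. Second, your direct-limit argument relies on the \emph{uniqueness} (up to canonical isometry) of the minimal tree spanned by a finite set satisfying the 4-point condition; this is true, and is exactly the statement that such a set is $0$-hyperbolic and its injective envelope is an $\Rb$-tree, but you should flag that uniqueness is being used to get the coherent system of embeddings $T_F\hookrightarrow T_{F'}$.
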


Let $\fpc(x,y,z,w)$ denote the closed condition $d(x,y)+d(z,w)\leq \max(d(x,z)+d(y,w),d(y,z)+d(x,w))$.

\begin{prop}\label{prop:RF-axiomatizable}
  The class of $\Rb$-forests is axiomatizable.
\end{prop}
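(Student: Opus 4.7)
The strategy is to exhibit explicit axioms. By a standard characterization, a metric space is an $\Rb$-tree iff it is complete, satisfies $\fpc$, and has midpoints for every pair of points (iterating the midpoint construction and invoking completeness then yields full geodesic arcs). Since continuous models are complete by convention, I would axiomatize $\Rb$-forests by the $4$-point condition on the whole extended space, together with the existence of midpoints for pairs at finite distance.

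I would first express $\fpc(x,y,z,w)$ as a universal closed condition; the one subtlety is that $d$ is $[0,\infty]$-valued, so potential $\infty-\infty$ cancellations must be handled, for instance by passing to the bounded transform $d/(1+d)$. Crucially, $\fpc$ imposes a genuine constraint only on $4$-tuples lying in a common component, because if $d(x,y)=\infty$ then by the triangle inequality at least one of $d(x,z),d(y,z)$ is also $\infty$ and at least one of $d(x,w),d(y,w)$ is also $\infty$, forcing the right-hand maximum to be $\infty$. Within a single component, by \cref{fact:R-tree-embedding}, $\fpc$ is equivalent to embeddability in an $\Rb$-tree. For midpoints I would use a scheme indexed by $R\in\mathbb{Q}_{>0}$, each axiom of the form $\sup_{x,y}\inf_z\psi_R(x,y,z)=0$, where $\psi_R$ is a continuous $[0,\infty]$-formula that vanishes whenever $d(x,y)>R$ and otherwise measures the deviation of $z$ from being a midpoint of $x,y$; since any pair in $[x]_{<\infty}$ has finite distance, this scheme forces midpoint existence throughout each component.

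In any model $M$ of these axioms and any $x\in M$, the set $[x]_{<\infty}$ is then a complete, $\fpc$-satisfying metric space in which every pair of points has a midpoint (completeness transfers via the bounded transform, in which $[x]_{<\infty}$ is clopen), hence an $\Rb$-tree; this gives the forward direction, and the reverse is immediate. The only point requiring real care is rendering the $\fpc$-sentence and the formulas $\psi_R$ as genuine continuous formulas on the extended half-line, which is routine; I do not expect a substantive obstacle.
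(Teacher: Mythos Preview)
Your approach is correct but genuinely different from the paper's. You write down explicit axioms---the $4$-point condition plus a midpoint scheme---and then appeal to the standard characterization of $\Rb$-trees as complete geodesic $0$-hyperbolic spaces (which follows from \cref{fact:R-tree-embedding} together with the fact that a geodesic subspace of an $\Rb$-tree is itself an $\Rb$-tree). The paper instead argues semantically via the continuous Keisler--Shelah theorem: it verifies that ultraproducts of $\Rb$-forests are $\Rb$-forests by constructing arcs coordinatewise and pushing the truncated $4$-point conditions through \L o\'s, and it verifies the ``ultraroot'' direction by observing that $[x,y]\subseteq\dcl\{x,y\}$ in any $\Rb$-forest ultrapower, so the arc already lies in the elementary substructure $X$. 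Your route is more elementary (no Keisler--Shelah) and makes the $\forall\exists$ form of the axioms---needed later in \cref{prop:R-forest-intersection-AE}---immediately visible; the paper's route avoids having to cite the midpoint characterization and produces, as a by-product, an explicit description of arcs in ultraproducts that is reused in \cref{prop:model-companion-QE}. One small point to make explicit in your write-up: the condition $\sup_{x,y}\inf_z\psi_R=0$ only yields \emph{approximate} midpoints, so when you say ``iterating the midpoint construction and invoking completeness'' you are really invoking the standard fact that approximate midpoints in a complete space suffice for geodesicity.
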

\begin{proof}
  By the continuous version of the Keisler-Shelah theorem \cite[Cor.~C.5]{Goldbring2020-GOLCSP-3}, we know that in order to establish that the class of $\Rb$-forests is axiomatizable, it is sufficient to show that any ultraproduct of $\Rb$-forests is an $\Rb$-forest and that for any complete extended metric space $X$ and any ultrafilter $\Uc$, if the ultrapower $X^{\Uc}$ is an $\Rb$-forest, then $X$ is an $\Rb$-forest as well.

  First let $(X_i)_{i \in I}$ be a family of $\Rb$-forests and let $\Uc$ be an ultrafilter on $I$. Let $X_\Uc$ be the ultraproduct of the $X_i$'s with regards to $\Uc$. We need to show that for each $x \in X_{\Uc}$, $[x]_{<\infty}$ is an $\Rb$-tree. Suppose that for some distinct $x,y \in X_\Uc$, $d(x,y) < \infty$. Let $x=(x_i)_{i\in I}/\Uc$ and $y = (y_i)_{i \in I}/\Uc$. We have that for a large set of indices, $d(x_i,y_i) < \infty$ in $X_i$. For any such index and any $r \in [0,d(x,y)]$, let $z_{i,r}$ be the unique element of $[x_i,y_i]$ satisfying $d(x_i,z_{i,r})=\frac{r}{d(x,y)}\cdot d(x_i,y_i)$. For each $r$, let $z_r = (z_i,r)_{i \in I}/\Uc$. It is straightforward to show that the function $r \mapsto z_r$ is an isometric embedding of $[0,d(x,y)]$ into $X$ satisfying $z_0 = x$ and $z_{d(x,y)} = y$. Since we can do this for any $x,y \in X$ with $d(x,y) < \infty$, we have that each $[x]_{<\infty}$ is arc-connected.

  To finish showing that $X$ is an $\Rb$-forest, we need to establish that each $[x]_{< \infty} \subseteq X$ is uniquely arc-connected. By \cref{fact:R-tree-embedding}, for each $r \in (0,\infty)$, every $\Rb$-forest satisfies the following truncated form of the 4-point condition:
  \[
    \forall x_0x_1x_2x_3 \left(\bigwedge_{i<j<4} d(x_i,x_j)< r \to \fpc(x_0,x_1,x_2,x_3)\right).
  \]
These conditions are expressible as closed conditions in continuous logic, so $X_{\Uc}$ satisfies these as well. Since this holds for each $r \in (0,\infty)$, we have that for any $x \in X_{\Uc}$, $[x]_{<\infty}$ can be embedded in an $\Rb$-tree. This implies that for any $x,y \in X$ with $d(x,y) < \infty$, there is at most one arc between $x$ and $y$. Since we have already established that there is an arc between $x$ and $y$, we have that each $[x]_{<\infty} \subseteq X_\Uc$ is uniquely arc-connected. Therefore $X_\Uc$ is an $\Rb$-forest.

Now fix a complete extended metric space $X$ and an ultrafilter $\Uc$ and suppose that $X^{\Uc}$ is an $\Rb$-forest. Identify $X$ with its image in $X^{\Uc}$. Clearly we have that for each $x \in X$, $[x]_{<\infty}$ embeds into an $\Rb$-tree, so we just need to show that $X$ is arc connected. Fix $x,y \in X$ with $d(x,y)$ and consider $[x,y] \subseteq X^{\Uc}$. For all ultrapowers $Y$ of $X^{\Uc}$, $Y$ is an $\Rb$-forest by the above. Therefore for any automorphism of any such $Y$ fixing $x$ and $y$, the set $[x,y] \subseteq X^{\Uc}$ is fixed pointwise. Therefore $[x,y] \subseteq \dcl\{x,y\}$ in $X^{\Uc}$. Since $Y$ is an elementary submodel containing $x$ and $y$, it therefore must be the case that $[x,y] \subseteq Y$ as well. Therefore $X$ is arc-connected. Since we can do this for any $x,y \in X$ with $d(x,y) < \infty$, we have that $X$ is an $\Rb$-forest.
\end{proof}

\begin{defn}
  Let $\RF$ denote the common theory of all $\Rb$-forests.
\end{defn}

\begin{prop}\label{prop:projection}
  For any $\Rb$-forests $X \subseteq Y$ and any $y \in Y$, if $d(y,X) < \infty$, then there is a unique $x \in X$ such that $d(y,x) = d(y,X)$.
\end{prop}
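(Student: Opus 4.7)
The plan is to use the existence of median points in $\Rb$-trees together with the observation that an $\Rb$-forest contains every arc between two of its points at finite distance. Concretely, if $x_1, x_2 \in X$ with $d(x_1, x_2) < \infty$, then the unique arc $[x_1, x_2]$ in the ambient $\Rb$-tree $[x_1]_{<\infty} \subseteq Y$ must coincide with the $X$-arc between them, since both are arcs in the uniquely arc-connected space $[x_1]_{<\infty}$; hence $[x_1, x_2] \subseteq X$. This closure-under-arcs property is where I use that $X$ is an $\Rb$-forest and not just a closed subset of $Y$.

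For existence, set $r = d(y, X)$, which is finite, and fix a minimizing sequence $(x_n) \subseteq X$ with $d(y, x_n) \to r$. Since all distances involved are finite, inside the $\Rb$-tree $[y]_{<\infty}$ I can form the median $m_{n,k}$ of $y, x_n, x_k$ for each pair $n, k$. Because $m_{n,k}$ lies on $[x_n, x_k] \subseteq X$, we have $m_{n,k} \in X$, so $d(y, m_{n,k}) \geq r$. The Gromov-product identity
\[
  d(y, m_{n,k}) \;=\; \tfrac{1}{2}\bigl(d(y, x_n) + d(y, x_k) - d(x_n, x_k)\bigr)
\]
then rearranges to $d(x_n, x_k) \leq d(y, x_n) + d(y, x_k) - 2r \to 0$, so $(x_n)$ is Cauchy. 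By completeness of the $\Rb$-forest $X$ it converges to some $x \in X$, and continuity of the metric gives $d(y, x) = r$.

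For uniqueness, suppose $x \neq x'$ in $X$ both realize $d(y, \cdot) = r$. Let $m$ be the median of $y, x, x'$; then $m \in [x, x'] \subseteq X$, so $m \in X$, but
\[
  d(y, m) \;=\; \tfrac{1}{2}\bigl(d(y, x) + d(y, x') - d(x, x')\bigr) \;=\; r - \tfrac{1}{2} d(x, x') \;<\; r,
\]
contradicting $r = d(y, X)$.

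Nothing here is deep; the only point deserving a moment of care is the opening observation that arcs of $Y$ between points of $X$ remain in $X$, since that is exactly the place where the $\Rb$-forest hypothesis on $X$ (as opposed to mere closedness) is used. Once that is in hand, both halves of the statement drop out of the Gromov-product formula.
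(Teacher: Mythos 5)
Your proof is correct, and it takes a genuinely different route from the paper. The paper fixes some $z\in X$ at finite distance from $y$, observes that $[z,y]\cap X$ is compact, takes the unique closest point $x_z$ of $y$ in that set, and then proves $x_z$ does not depend on $z$ by invoking two lemmas from Evans's monograph about intersections of arcs in $\Rb$-trees. You instead run the classical ``projection onto a gated subset'' argument: a minimizing sequence, made Cauchy by the median/Gromov-product identity $d(y,m)=\tfrac12\bigl(d(y,a)+d(y,b)-d(a,b)\bigr)$, converging by completeness of $X$, with uniqueness falling out of the same identity. The one point you correctly flag as load-bearing is that arcs of $Y$ between finite-distance points of $X$ lie in $X$; this follows from unique arc-connectivity of $[x_1]_{<\infty}$ in both $X$ and $Y$, exactly as you say, and it is used implicitly in the paper as well (to conclude $w\in X$). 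Your argument has the advantage of being self-contained (no citations to external tree lemmas) and of being the standard CAT$(0)$/convexity argument, so it generalizes transparently; the paper's argument is a bit shorter because it offloads the tree combinatorics to cited results and exploits compactness of a single arc rather than tracking a whole sequence.
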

\begin{proof}
  Fix $z \in X$ with finite distance to $y$. By compactness of $[z,y]$, there is a unique $x_z \in [z,y]\cap X$ of minimal distance to $y$. To finish the proposition, all we need to do is show that $x_{z_0}=x_{z_1}$ for any $z_0,z_1 \in X$ with finite distance to $Y$. By \cite[Lem.~3.20,~3.22]{Evans2008}, for any such $z_0$ and $z_1$, there is a unique $w \in Y$ such that $[x_{z_0},y]\cap[x_{z_1},y] = [w,y]$ which furthermore satisfies that $w \in [x_{z_0},x_{z_1}]$.  Therefore $w \in X$, which implies that $x_{z_0} \in [w,y]$ and $x_{z_1} \in [w,y]$, but this is only possible if $x_{z_0}=x_{z_1}=w$.
\end{proof}

\begin{defn}
  Given $\Rb$-forests $X \subseteq Y$, we write $\pi_X$ for the partially defined function that takes an element of $Y$ to the unique closest point in $X$ if it exists.
\end{defn}

\begin{prop}\label{prop:RF-JEP-AP}
  $\Mod(\RF)$ has the joint embedding property and the amalgamation property.
\end{prop}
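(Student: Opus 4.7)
For the joint embedding property, given $\Rb$-forests $X_1$ and $X_2$ the simplest construction will work. I take $Y = X_1 \sqcup X_2$ with the extended metric that restricts to $d_{X_i}$ on each summand and equals $\infty$ between the two summands. For every $y \in Y$, the set $[y]_{<\infty}$ in $Y$ is exactly the finite-distance component of $y$ inside the $X_i$ containing it, so it is an $\Rb$-tree; completeness is inherited componentwise.

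For the amalgamation property, suppose $Z$ embeds isometrically into $\Rb$-forests $X_1$ and $X_2$, and after relabeling assume $X_1 \cap X_2 = Z$. My plan is to set $Y = X_1 \cup X_2$ as a set and define an extended metric $d_Y$ by taking $d_{X_i}$ on each $X_i$ and, for $a \in X_1 \setminus Z$ and $b \in X_2 \setminus Z$ whose finite-distance components in $X_1$ and $X_2$ meet a common finite-distance component of $Z$,
\[
d_Y(a, b) = d(a, \pi_Z(a)) + d(\pi_Z(a), \pi_Z(b)) + d(\pi_Z(b), b),
\]
with the projections supplied by \cref{prop:projection}; for the remaining pairs set $d_Y(a,b) = \infty$. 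A short calculation shows this agrees with the quotient pseudo-metric $\inf_{z \in Z} d(a, z) + d(z, b)$.

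To conclude that $Y \in \Mod(\RF)$ and that both $X_i$ embed into $Y$ over $Z$, I will verify in turn: (i) the triangle inequality for $d_Y$, by a case analysis on how the three vertices distribute between $X_1 \setminus Z$, $Z$, and $X_2 \setminus Z$; (ii) arc-connectedness of each finite-distance component of $Y$, obtained by concatenating the arcs $[a, \pi_Z(a)] \subseteq X_1$, $[\pi_Z(a), \pi_Z(b)] \subseteq Z$, and $[\pi_Z(b), b] \subseteq X_2$, observing that projections are preserved along each sub-arc so that the concatenation is isometric to an interval; (iii) the 4-point condition (\cref{fact:R-tree-embedding}) on each finite-distance component, which together with (ii) promotes each such component to an $\Rb$-tree; and (iv) completeness, from the dichotomy that a Cauchy sequence either eventually stays in one $X_i$ or has projections forming a Cauchy sequence in $Z$ whose limit serves as its limit in $Y$.

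The main obstacle will be (iii). The approach is a finite case analysis on the partition of the four points of a quadruple across $X_1 \setminus Z$, $Z$, and $X_2 \setminus Z$. For each case, after rewriting each mixed distance via the projection formula, the \emph{pendant} lengths $d(\cdot, \pi_Z(\cdot))$ appear as additive terms that contribute equally to each of the three quantities compared in the 4-point inequality, so they cancel and reduce the problem to the 4-point condition inside $X_1$, $X_2$, or $Z$, all of which hold by assumption. This is the standard calculation underlying amalgamation of $\Rb$-trees along a common subtree.
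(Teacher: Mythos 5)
Your joint embedding construction (disjoint union with all new distances $\infty$) is exactly the one in the paper. For amalgamation you also arrive at the same object: the free amalgam of the two extensions over the common $\Rb$-forest, with cross-distances given by the projection formula. The paper first defines the cross-distance as $\rho(y_0,y_1)=\inf_{x_0,x_1\in X}\bigl(d(y_0,x_0)+d(x_0,x_1)+d(x_1,y_1)\bigr)$, then invokes \cref{prop:projection} together with \cite[Lem.~3.20, 3.22]{Evans2008} to conclude both that this infimum is attained at the projections and that the resulting space is an $\Rb$-forest. You instead write the projection formula directly and propose to verify the $\Rb$-forest axioms by hand; this is a legitimate, more self-contained route, and your steps (i), (ii), (iv) go through (for (ii) the implicit key identity is $d(a,z)=d(a,\pi_X(a))+d(\pi_X(a),z)$ for $z\in X$ at finite distance, which makes the concatenated path a genuine geodesic).

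The one place your sketch is too quick is (iii). For a quadruple $p_1,p_2\in Y_0\setminus X$, $p_3,p_4\in Y_1\setminus X$, with $q_i=\pi_X(p_i)$ and $\ell_i=d(p_i,q_i)$, the two \emph{cross} sums do each decompose as $\ell_1+\ell_2+\ell_3+\ell_4$ plus a sum of $X$-distances, so the pendants cancel there. But the within sum $d(p_1,p_2)+d(p_3,p_4)$ is computed inside $Y_0$ and $Y_1$ and does \emph{not} generally equal $\ell_1+d(q_1,q_2)+\ell_2+\ell_3+d(q_3,q_4)+\ell_4$: that identity holds only when $q_1\neq q_2$ and $q_3\neq q_4$; when $q_1=q_2$ one only has $d(p_1,p_2)\le\ell_1+\ell_2$. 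So the pendants do not literally ``contribute equally to each of the three quantities,'' and the problem does not reduce to the four-point condition in $X$ alone. The argument still closes---in the degenerate sub-cases the two cross sums are automatically equal to each other and, using the triangle inequality for $d(p_1,p_2)$ and $d(p_3,p_4)$, dominate the within sum---but you should carry out this sub-case analysis rather than assert a uniform cancellation. With (iii) tightened in this way, your proposal gives a correct, elementary proof that does by hand what the paper delegates to the cited results of Evans.
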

\begin{proof}
  Given two $\Rb$-forests $X$ and $Y$, the space $X \sqcup Y$ (where all new distances are set to $\infty$) is clearly an $\Rb$-forest. Therefore $\Mod(\RF)$ has the joint embedding property.

  To see that $\Mod(\RF)$ has the amalgamation property, fix an $\Rb$-forest $X$ and two $\Rb$-forest extensions $Y_0 \supseteq X$ and $Y_1\supseteq X$. We may assume that $Y_0 \setminus X$ and $Y_1\setminus X$ are disjoint. For any $y_0 \in Y_0$ and $y_1 \in Y_1$, let $\rho(y_0,y_1) = \inf_{x_0,x_1 \in X}d(y_0,x_0)+d(x_0,x_1)+d(x_1,y_1)$. Clearly if either $d(y,X_0) = \infty$ or $d(y,X_1) = \infty$, then $\rho(y_0,y_1) = \infty$. By \cref{prop:projection}, we have that if $d(y_0,X) < \infty$ and $d(y_1,X) < \infty$, then
  \[
    \rho(y_0,y_1)\leq d(y_0,\pi_X(y_0))+d(\pi_X(y_0),\pi_X(y_1))+d(\pi_X(y_1),y_1),
  \]
  but by \cite[Lem.~3.20,~3.22]{Evans2008} $\rho(y_0,y_1)$ is actually equal to the second quantity. This implies that the unique metric on $Y_0\cup Y_1$ extending the existing metrics and $\rho$ is a metric and that the resulting space is an $\Rb$-forest.
\end{proof}

The following \cref{fact:preservations} is a direct generalization of some classic preservation results, and the proofs are essentially the same (see for example Theorem 6.5.9 and Exercise 9 in Section 6.5 in \cite{hodges_1993}). A continuous theory $T$ has an \emph{$\forall \exists$-axiomatization} if it can be axiomatized by conditions of the form $\sup_x \inf_y \varphi \leq 0$ with $\varphi$ quantifier-free. %

\begin{fact}\label{fact:preservations}
  Fix a continuous theory $T$.
  \begin{enumerate}
  \item The following are equivalent:
    \begin{enumerate}
    \item $T$ has an $\forall \exists$-axiomatization.
    \item For any chain $(M_i)_{i < \alpha}$ of models of $T$, $\overline{\bigcup_{i < \alpha}M_i} \models T$.
    \end{enumerate}
  \item Suppose that for any models $A,B,C\models T$ with $B,C \subseteq A$ and $B\cap C \neq \varnothing$, $B\cap C \models T$. Then the following hold:
    \begin{enumerate}
    \item If $A \models T$ and $(B_i)_{i \in I}$ is a family of substructures of $A$ such $B_i \models T$ for each $i \in I$ and $\bigcap_{i\in I}B_i$ is non-empty, then $\bigcap_{i \in I}B_i \models T$.
    \item $T$ has an $\forall\exists$-axiomatization.
    \end{enumerate}
  \end{enumerate}
\end{fact}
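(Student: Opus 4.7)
Both parts are continuous-logic analogues of classical preservation results in Hodges~\cite{hodges_1993} (Theorem~6.5.9 and Exercise~9 of §6.5), so the plan is to transfer those proofs to continuous logic with the customary adjustments: replacing satisfaction by $\inf/\sup$ evaluation and taking metric completions whenever unions of chains are formed.

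For Part~1, the direction (a)$\Rightarrow$(b) is routine. Given an axiom $\sup_{\bar x}\inf_{\bar y}\varphi(\bar x,\bar y)\leq 0$ with $\varphi$ quantifier-free, a chain $(M_i)_{i<\alpha}$, and a tuple $\bar a$ from the completed union $M$, I would approximate $\bar a$ by a tuple $\bar a'\in M_i$ using density of $\bigcup_iM_i$ in $M$, invoke $M_i\models T$ to get $\bar b\in M_i$ with $\varphi(\bar a',\bar b)$ arbitrarily small, and transfer back via uniform continuity of $\varphi$ and $M_i\subseteq M$. For (b)$\Rightarrow$(a), the plan is the continuous Chang--{\L}o\'s--Suszko argument. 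Let $T' = T^{\forall\exists}$; given $N\models T'$, I would construct a zigzag elementary chain $N = N_0\preceq N_1\preceq\cdots$ interleaved with inclusions $N_i\subseteq M_i\subseteq N_{i+1}$ with $M_i\models T$. Consistency of $T\cup\mathrm{diag}_{\mathrm{qf}}(N_i)$ (producing $M_i$) would follow from $N_i\models T'$ via the standard contrapositive together with continuous compactness, and consistency of $\mathrm{Diag}_{\mathrm{el}}(N_i)\cup\mathrm{diag}_{\mathrm{qf}}(M_i)$ (producing $N_{i+1}$) from the existential closure of $N_i$ in $M_i$ induced by the zigzag. The completed union is both $\succeq N$ by construction and a model of $T$ by (b), so $N\models T$.

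For Part~2, I would first establish the finite case of (a) by induction on $|I|$ straight from the binary hypothesis. For (b), by Part~1 it suffices to prove chain closure, so given a chain $(M_\alpha)_{\alpha<\lambda}$ with completed union $M$, my plan is to embed $M$ into a model $A\models T$ (e.g., diagonally into an ultraproduct of the $M_\alpha$'s along a cofinal ultrafilter) and then exhibit $M$ inside $A$ as a binary intersection $M = N_0\cap N_1$ of submodels $N_0,N_1\models T$ that both contain $M$, so that the hypothesis directly yields $M\models T$. For (a) at arbitrary cardinality, I would then use (b) and Zorn's lemma: fix $a\in\bigcap_iB_i$, find a $\subseteq$-minimal submodel $N^*\subseteq A$ containing $a$ that models $T$ (chains within this family have intersections that remain models by combining the finite case with the $\forall\exists$-axiomatization from (b) via a Tarski--Vaught style test), and observe that minimality together with the binary hypothesis forces $N^*\subseteq B_i$ for every $i$; varying $a$ covers $\bigcap_iB_i$.

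The main obstacle I anticipate is producing $N_1$ in the proof of (b): given $N_0\supseteq M$ inside $A$, one needs $N_1\models T$ with $M\subseteq N_1\subseteq A$ and $N_0\cap N_1 = M$. This amounts to an ``algebraic-closure-of-$M$-in-$N_0$ equals $M$'' condition that is not automatic. If a single $N_1$ cannot be produced, a workaround is to find, for each $b\in A\setminus M$, a separate model $N_b\models T$ with $M\subseteq N_b\subseteq A$ and $b\notin N_b$, and appeal to an arbitrary-intersection form of (a); this requires reordering the logical dependence but reduces the global question to pointwise avoidance, which in turn should succumb to a genericity construction inside an elementary extension of the $M_\alpha$'s.
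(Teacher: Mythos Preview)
The paper does not prove this statement: it is recorded as a Fact, with the remark that the arguments are essentially those of Theorem~6.5.9 and Exercise~9 of \S6.5 in Hodges. So there is no in-paper proof to compare your proposal against. Your treatment of Part~1 is the standard Chang--\L o\'s--Suszko zigzag and transfers to continuous logic in the way you describe.

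Your plan for Part~2, however, has a genuine circularity. You want to prove~(b) by realizing the chain union $M$ inside some $A\models T$ as an intersection of submodels that model~$T$; when the binary version $M=N_0\cap N_1$ runs into the obstacle you yourself flag, you fall back on a family $(N_b)_{b\in A\setminus M}$ and invoke the \emph{arbitrary-index} case of~(a). But your argument for arbitrary~(a) in turn relies on~(b): you apply Zorn to the poset of submodels $N\subseteq A$ with $a\in N$ and $N\models T$, ordered by reverse inclusion, and assert that intersections of decreasing chains in this poset remain models ``by combining the finite case with the $\forall\exists$-axiomatization from~(b) via a Tarski--Vaught style test.'' That step does not go through. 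An $\forall\exists$-axiomatization yields closure under \emph{increasing} unions; it says nothing about \emph{decreasing} intersections. Concretely, for an axiom $\sup_{\bar x}\inf_{\bar y}\varphi\le 0$ and a tuple $\bar a$ in $\bigcap_j N_j$, each $N_j$ supplies a witness $\bar b_j\in N_j$, but nothing forces a common witness into $\bigcap_j N_j$, and no Tarski--Vaught condition manufactures one. So neither your proof of~(b) nor your proof of arbitrary~(a) stands on its own; each leans on the other. To break the loop you need to show directly that binary-intersection closure implies chain closure, which is precisely the content of Hodges' Exercise~6.5.8---it is worth consulting the intended solution there rather than routing through arbitrary intersections.
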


\begin{prop}\ \label{prop:R-forest-intersection-AE}
  \begin{enumerate}
  \item Given an $\Rb$-forest $Y$ and a family $(X_i)_{i \in I}$ of substructures of $Y$ that are themselves $\Rb$-forests, $\bigcap_{i \in I}X_i$ is an $\Rb$-forest.
  \item For any $\Rb$-forest $Y$ and any $X_0 \subseteq Y$, there is a unique smallest $\Rb$-forest $X\subseteq Y$ such that $X_0 \subseteq X$.
  \item $\RF$ has an $\forall \exists$-axiomatization.
  \end{enumerate}
\end{prop}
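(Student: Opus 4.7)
The plan is to prove the three parts in sequence, with parts 2 and 3 being straightforward consequences of part 1. For part 1, I first observe that any substructure is closed in its ambient model, so each $X_i$ is closed in $Y$ and hence so is the intersection $X \coloneqq \bigcap_{i \in I} X_i$; since $Y$ is complete as a metric space, $X$ is complete as well. The 4-point condition $\fpc(x,y,z,w)$ is a closed universal condition that holds in $Y$ by \cref{fact:R-tree-embedding}, so it is inherited by $X$. The real work is arc-connectivity within each finite-distance component: given $x, y \in X$ with $d(x,y) < \infty$, each $\Rb$-forest $X_i$ contains an arc $[x,y]_{X_i}$ isometric to $[0, d(x,y)]$ with the metric restricted from $Y$. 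Such an arc is also an arc from $x$ to $y$ inside $Y$, so by uniqueness of arcs in $Y$ it coincides (as a subset) with the arc $[x,y]_Y$. Hence $[x,y]_Y \subseteq X_i$ for every $i$, and therefore $[x,y]_Y \subseteq X$; combined with the inherited uniqueness from $Y$, this shows that $X$ is an $\Rb$-forest (at least when non-empty, which is all that is needed for the applications below).

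For part 2, apply part 1 to the family of all $\Rb$-forest substructures of $Y$ containing $X_0$; the family is non-empty since $Y$ itself is such a substructure, and the intersection is then the unique smallest $\Rb$-forest substructure of $Y$ containing $X_0$. For part 3, invoke \cref{fact:preservations}(2): the two-substructure case of part 1 is exactly the hypothesis of that result, so $\RF$ admits an $\forall\exists$-axiomatization. The main obstacle is the arc-identification step in part 1---showing that the arc between two given points inside each substructure coincides with the arc in the ambient $\Rb$-forest---since this is what lets the otherwise existential property of arc-connectivity be transferred across an arbitrary intersection; everything else is either formal (closure, completeness, the universal 4-point condition) or a direct quotation of \cref{fact:preservations}.
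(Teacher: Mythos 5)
Your proof is correct and takes essentially the same approach as the paper, which dispatches part~1 with ``immediate from the definition of $\Rb$-forest''---your arc-uniqueness argument (any arc from $x$ to $y$ inside any $X_i$ is an arc in $Y$, hence equals $[x,y]_Y$ by unique arc-connectivity of $[x]_{<\infty}$, so $[x,y]_Y$ lies in every $X_i$) is precisely the content of that ``immediate,'' and your deductions of parts~2 and~3 from part~1 and \cref{fact:preservations} match the paper. A small cosmetic note: the separate appeal to the 4-point condition is redundant, since unique arc-connectivity of each $[x]_{<\infty}$ in $Y$ already gives uniqueness of arcs in the intersection directly.
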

\begin{proof}
  1 is immediate from the definition of $\Rb$-forest. 2 follows by considering the intersection of all $\Rb$-forests $Z\subseteq Y$ such that $X_0 \subseteq Z$. 3 follows from 1 and \cref{fact:preservations}. 
\end{proof}

\begin{defn}
  Given an $\Rb$-forest $Y$ and a set $X_0 \subseteq Y$, we will write $\langle X_0 \rangle$ for the unique smallest $\Rb$-forest $X \subseteq Y$ such that $X_0 \subseteq X$.

  Call an $\Rb$-forest $X$ \emph{finitely generated} if there is a finite set $X_0 \subseteq X$ such that $X = \langle X_0 \rangle$.
\end{defn}

Technically in the above definition we ought to specify the ambient space in our notation, but in \cref{prop:closure-isometry-type} we will see that this is unnecessary.

\begin{lem}\label{lem:closure-closure}
  For any $\Rb$-forest $A$ and set $B \subseteq A$,
  \[
    \langle B \rangle = \overline{\bigcup \{[b_0,b_1] : b_0,b_1 \in B,~d(b_0,b_1) < \infty\}}.
  \]
\end{lem}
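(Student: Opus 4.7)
Let $C$ denote the right-hand side. The easy inclusion $\langle B\rangle \supseteq C$ goes as follows: by \cref{prop:R-forest-intersection-AE}(2), $\langle B\rangle$ is an $\Rb$-forest containing $B$, so for any $b_0,b_1\in B$ with $d(b_0,b_1)<\infty$ the unique arc $[b_0,b_1]$ (computed in $A$, and equal to the arc in $\langle B\rangle$ since $\langle B\rangle\subseteq A$) lies in $\langle B\rangle$. Because $\langle B\rangle$ is complete as an extended metric space, it is closed in $A$, hence contains $C$.

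The real content is $C\supseteq\langle B\rangle$, which by minimality of $\langle B\rangle$ amounts to showing $C$ is itself an $\Rb$-forest. Completeness is immediate since $C$ is closed in the complete space $A$. Within any finite-distance component $[x]_{<\infty}\cap C$, unique arc-connectedness and the isometry of arcs to real intervals are both inherited from the $\Rb$-tree $[x]_{<\infty}\subseteq A$: any arc in $C$ between two finite-distance points is also an arc in $A$, so there is at most one and it is isometric to an interval. Thus the only nontrivial thing to verify is that $C$ is \emph{arc-connected} on each finite-distance component, i.e., that for every $x,y\in C$ with $d(x,y)<\infty$, the arc $[x,y]\subseteq A$ is actually contained in $C$.

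The plan for this step is as follows. Write $x=\lim_n x_n$ and $y=\lim_n y_n$ where $x_n\in[a_n,b_n]$ and $y_n\in[c_n,d_n]$ with $a_n,b_n,c_n,d_n\in B$ and all relevant distances finite; for $n$ large, $d(b_n,c_n)<\infty$ as well, so in fact the four points $a_n,b_n,c_n,d_n$ all lie in a single $\Rb$-tree component of $A$. The key geometric observation, which follows from a standard finite-point argument in $\Rb$-trees using \cite[Lem.~3.20, 3.22]{Evans2008}, is that the subtree generated by any finite subset of an $\Rb$-tree equals the union of the finitely many pairwise arcs. Applied to $\{a_n,b_n,c_n,d_n\}$, this gives
\[
[x_n,y_n]\subseteq\bigcup_{p,q\in\{a_n,b_n,c_n,d_n\}}[p,q]\subseteq C.
\]

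To conclude $[x,y]\subseteq C$, I would invoke continuity of the geodesic parametrization: in any $\Rb$-tree, the map sending $(u,v,t)$ with $0\le t\le d(u,v)$ to the unique point of $[u,v]$ at distance $t$ from $u$ is jointly continuous, which is a routine consequence of the $4$-point condition. Thus for each $t\in[0,d(x,y)]$ the points $z_t^{(n)}\in[x_n,y_n]$ at distance $\min(t,d(x_n,y_n))$ from $x_n$ satisfy $z_t^{(n)}\to z_t$, where $z_t$ is the corresponding point of $[x,y]$. Since $z_t^{(n)}\in C$ and $C$ is closed, $z_t\in C$, so $[x,y]\subseteq C$ as required. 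The main obstacle is precisely this continuity-of-geodesics passage to the limit, but once one has the finite subtree fact above it is standard from the $4$-point condition.
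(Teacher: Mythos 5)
Your proof is correct, but it takes a genuinely different route from the paper's. The paper first proves the lemma for finite $B$ by induction (adding one point $b$ at a time and observing that $\langle Bb\rangle = \langle B\rangle\cup[c,b]$ for some $c\in B$ with $d(b,c)<\infty$), and then handles general $B$ by showing $\langle B\rangle$ is the closed directed union $\overline{\bigcup\{\langle B_0\rangle : B_0\subseteq B~\text{finite}\}}$, invoking the abstract chain-preservation fact (\cref{fact:preservations}) together with \cref{prop:R-forest-intersection-AE} to see that this closure is again an $\Rb$-forest. You instead verify directly that the right-hand side $C$ is an $\Rb$-forest, so that minimality gives $\langle B\rangle\subseteq C$. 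The nontrivial ingredients in your version are the finite-subtree fact (that the $\Rb$-subtree generated by finitely many points is the union of their pairwise arcs --- which is, in effect, what the paper's finite-$B$ induction establishes) and the joint continuity of the geodesic parametrization $(u,v,t)\mapsto\text{point of }[u,v]\text{ at distance }t\text{ from }u$, which you use to pass to the limit $[x_n,y_n]\to[x,y]$. The paper's approach leans on model-theoretic preservation machinery it has already set up, while yours is more geometric and self-contained, at the cost of invoking (as standard) a continuity fact that the paper only uses implicitly elsewhere (e.g.\ in the definability of $\interp{a}{r}{b}$). Both are sound; your limit argument in particular correctly handles the bookkeeping (using $\min(t,d(x_n,y_n))$ and checking that the four generating points $a_n,b_n,c_n,d_n$ eventually all lie in one finite-distance component), so there is no gap.
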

\begin{proof}
  First we will establish the statement of the lemma in the special case that $B$ is finite. We will show this by induction. If $|B|\leq 2$, then it is clearly true, so assume that for some $n \geq 2$, the lemma holds for all $B$ with $|B| \leq n$. Let $B$ be a subset of $A$ with $|B| = n$ and fix some $b \in B$. If $d(b,B) = \infty$, then $\langle Bb \rangle = \langle B \rangle \cup \{b\}$, so we are done. Otherwise, if $d(b,B) < \infty$, let $c \in B$ be an element with $d(b,c) < \infty$. We now have that $\langle B \rangle \cup [c,b]$ is the unique minimal $\Rb$-forest containing $Bb$.

  \begin{sloppypar}
    Now to finish the argument we will show that for any $B \subseteq A$, $\langle B \rangle = \overline{\bigcup \{\langle B_0 \rangle : B_0 \subseteq B,~B_0~\textup{finite}\}}$. Let $C = \bigcup\{\langle B_0 \rangle : B_0\subseteq B,~B_0~\textup{finite}\}$. Clearly $B \subseteq C \subseteq \langle B \rangle$. By \cref{fact:preservations} and \cref{prop:R-forest-intersection-AE}, $\overline{C}$ is an $\Rb$-forest. Therefore $\overline{C} = \langle B \rangle$.
  \end{sloppypar}
\end{proof}

\begin{prop}\label{prop:closure-isometry-type}
  For any $\Rb$-forests $A$ and $B$ and any tuples $\abar \in A$ and $\bbar \in B$ of the same (possibly infinite) length $\alpha$, if $d(a_i,a_j) = d(b_i,b_j)$ for all $i<j<\alpha$, then there is a unique isometry $f: \langle \abar \rangle \to \langle \bbar \rangle$ satisfying $f(a_i) = b_i$ for each $i<\alpha$.
\end{prop}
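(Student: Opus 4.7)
The plan is two-step: prove the proposition first for finite tuples by induction on the tuple length, then extend to arbitrary $\alpha$ by a density-and-continuity argument based on \cref{lem:closure-closure}.

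Uniqueness is uniform in $\alpha$, so I would handle it first. If $f$ extends $a_i \mapsto b_i$ and is an isometry, then for each $i, j$ with $d(a_i, a_j) < \infty$ the image $f[a_i, a_j]$ is an arc from $b_i$ to $b_j$ in $\langle \bbar \rangle$, which by unique arc-connectedness inside the relevant $\Rb$-tree component must be $[b_i, b_j]$. Since each arc is isometric to $[0, d(a_i, a_j)]$, the restriction $f \res [a_i, a_j]$ is determined by its endpoints. By \cref{lem:closure-closure} these arcs have dense union in $\langle \abar \rangle$, so $f$ is determined everywhere.

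For existence with $|\abar| = n < \omega$, I would induct on $n$, the cases $n \leq 1$ being trivial. Given $f_n \colon \langle a_0, \dots, a_{n-1}\rangle \to \langle b_0, \dots, b_{n-1}\rangle$, I extend to include $a_n$. If $a_n$ is at infinite distance from every $a_i$, the hypothesis gives the same for $b_n$, and I set $f_{n+1}(a_n) = b_n$. Otherwise \cref{prop:projection} yields a unique nearest point $p \in \langle a_0, \dots, a_{n-1}\rangle$ to $a_n$, and symmetrically a $q$ on the $\bbar$-side. Working inside the $\Rb$-tree component of $a_n$, the set $\langle a_0, \dots, a_{n-1}\rangle \cap [a_n]_{<\infty}$ is a finite union of compact arcs $[a_i, a_j]$ indexed over pairs from that component, so $p$ lies on some such $[a_i, a_j]$. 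The standard Gromov-product/median formula for $\Rb$-trees then gives
\[
  d(a_i, p) = \tfrac{1}{2}\bigl(d(a_i, a_n) + d(a_i, a_j) - d(a_j, a_n)\bigr), \quad d(p, a_n) = \tfrac{1}{2}\bigl(d(a_i, a_n) + d(a_j, a_n) - d(a_i, a_j)\bigr),
\]
both depending only on pairwise distances among $\abar$. Hence $f_n(p) = q$ and $d(p, a_n) = d(q, b_n)$, and since $\langle a_0, \dots, a_n \rangle = \langle a_0, \dots, a_{n-1}\rangle \cup [p, a_n]$, I extend $f_n$ via the unique isometry $[p, a_n] \to [q, b_n]$ sending $p \mapsto q$ and $a_n \mapsto b_n$.

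For arbitrary $\alpha$, the finite case supplies isometries $f_{\abar_0} \colon \langle \abar_0 \rangle \to \langle \bbar_0 \rangle$ for each finite sub-tuple $\abar_0 \subseteq \abar$; the already-established uniqueness makes them mutually compatible, so they glue to an isometry on the directed union $\bigcup\{\langle \abar_0 \rangle : \abar_0 \subseteq \abar \text{ finite}\}$, which by \cref{lem:closure-closure} is dense in $\langle \abar \rangle$. Uniform continuity extends this uniquely to $\langle \abar \rangle$. The only step requiring real work is the identification $f_n(p) = q$ in the inductive step: one must recognize the projection point as an explicit function of pairwise distances via the Gromov-product formula (cf.\ \cite{Evans2008}), and check that if $p$ lies at the intersection of several arcs $[a_i, a_j]$ the various formulas agree, which is automatic because they all compute the same geometric point.
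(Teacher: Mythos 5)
Your proof is correct, but it takes a genuinely different route from the paper's. The paper's argument is short and high-level: it notes that for any subset $A$ of an $\Rb$-tree, $\langle A\rangle$ is the injective envelope (tight span) $e(A)$, invokes Isbell's theorem that isometries between metric spaces extend uniquely to isometries between their injective envelopes, and then splits $\langle \abar\rangle$ into finite-distance components to reduce the $\Rb$-forest case to the $\Rb$-tree case. Your proof instead builds the isometry explicitly by induction on tuple length, extending along the arc $[p,a_n]$ where $p$ is the projection of the new point onto the already-built subtree, and handling infinite tuples by gluing over the directed system of finite subtuples. This is more elementary and self-contained (no appeal to tight spans or Isbell's functoriality), at the cost of more bookkeeping; the paper's version pushes all the geometric work into a single black-boxed reference. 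One place where you should expand slightly: after computing the position of $p$ on $[a_i,a_j]$ via the Gromov product, you conclude $f_n(p)=q$, but one actually needs the small extra observation that $f_n(p)$ realizes the minimum distance to $b_n$ over $\langle b_0,\dots,b_{n-1}\rangle$ (not merely over $[b_i,b_j]$); this follows because the Gromov-product distances over all candidate pairs $(k,\ell)$ agree on the two sides, so the minimizing pair is the same, and then uniqueness of the projection in \cref{prop:projection} finishes it. That gap is easy to fill and doesn't change the structure of the argument.
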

\begin{proof}
  It is a standard fact and not hard to show that for any subset $A$ of an $\Rb$-tree, $\langle A \rangle$ is the injective envelope\footnote{Also known as the metric envelope, tight span, or hyperconvex hull.} of $A$, written $e(A)$. By \cite[Thm.~2.1]{Isbell1964}, for any metric spaces $X$ and $Y$ with an isometry $f : X \to Y$, there is a unique isometry from $e(X)$ to $e(Y)$ extending $f$. Since $\langle  \abar \rangle = \bigcup_{i < |\abar|}\langle \abar \cap [a_i]_{<\infty} \rangle$, the full result follows immediately.
\end{proof}

\begin{defn}
  Call an $\Rb$-forest $X$ \emph{type--existentially closed} or \emph{t.e.c.}\ if for any finitely generated $\Rb$-forest $Y \subseteq X$ and any finitely generated $\Rb$-forest $Z \supseteq Y$, there is an isometric embedding $f : Z \to X$ fixing $Y$ pointwise.
\end{defn}

\begin{prop}\label{prop:model-companion-QE}\
  \begin{enumerate}
  \item\label{mc-1} Every $\Rb$-forest $A$ has an $\Rb$-forest extension $B \supseteq A$ that is t.e.c.
  \item\label{mc-2} If $A$ and $B$ are t.e.c.\ $\Rb$-forests and $\abar \in A$ and $\bbar \in B$ are finite $n$-tuples such that $d(a_i,a_j) = d(b_i,b_j)$ for each $i<j<n$, then $(A,\abar)$ and $(B,\bbar)$ are back-and-forth equivalent and therefore $(A,\abar) \equiv (B,\bbar)$. In particular, there is a common theory of all t.e.c.\ $\Rb$-forests.
  \item\label{mc-3} Any ultrapower of a t.e.c.\ $\Rb$-forest is t.e.c.
  \item\label{mc-4} The common theory of t.e.c.\ $\Rb$-forests has quantifier-elimination and is the model companion of $\RF$.
  \end{enumerate}
\end{prop}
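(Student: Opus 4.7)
My plan is to handle the four parts in order. For part 1, the construction is a standard iterated amalgamation. Given an $\Rb$-forest $A$, I enumerate (up to isometry) representatives of all pairs $(Y,Z)$ where $Y$ is a finitely generated $\Rb$-forest and $Z \supseteq Y$ is a finitely generated extension; whenever a copy of $Y$ sits inside the current model, I use the amalgamation property from \cref{prop:RF-JEP-AP} to glue on a copy of $Z$ over it. Processing these extensions by transfinite induction and taking metric completions of unions at limit stages---which remain $\Rb$-forests by the $\forall\exists$-axiomatizability from \cref{prop:R-forest-intersection-AE} together with \cref{fact:preservations}---produces a t.e.c.\ extension once the chain is long enough to absorb all finitely generated pairs that ever appear.

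For part 2, the plan is a back-and-forth argument whose extension step is supplied by amalgamation plus t.e.c. Given $\abar \in A$ and $\bbar \in B$ with matching pairwise distances and a new element $c \in A$ to transport, apply \cref{prop:closure-isometry-type} to obtain the canonical isometry $f : \langle \abar \rangle \to \langle \bbar \rangle$ sending $a_i \mapsto b_i$, then form the amalgam $W = \langle \abar c \rangle \cup_f \langle \bbar \rangle$ using \cref{prop:RF-JEP-AP}. Since $W$ is a finitely generated $\Rb$-forest extension of the finitely generated sub-$\Rb$-forest $\langle \bbar \rangle \subseteq B$, t.e.c.\ of $B$ embeds $W$ into $B$ fixing $\langle \bbar \rangle$, and the image of $c$ is the required partner $c'$. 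Since each move matches distances exactly, iterating in both directions wins all the approximate Ehrenfeucht--\Fraisse\ games, so $(A,\abar) \equiv (B,\bbar)$.

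For part 3, let $A$ be t.e.c.\ with ultrapower $A^\Uc$, and fix a finitely generated $Y = \langle \ybar \rangle \subseteq A^\Uc$ together with a finitely generated extension $Z = \langle \ybar \zbar \rangle$. By \cref{prop:closure-isometry-type}, extending the inclusion $Y \hookrightarrow A^\Uc$ to an embedding of $Z$ amounts to producing $\zbar^\ast \in A^\Uc$ with prescribed distances to $\ybar$ and among themselves. Choose representatives $\ybar^j \in A$; for each $\e > 0$, on a $\Uc$-large set of indices the distance profile of $\ybar^j$ is within $\e$ of that of $\ybar$, and for each such $j$ I apply t.e.c.\ of $A$ to a suitable finitely generated $\Rb$-forest extension of $\langle \ybar^j \rangle$ (built to place $\zbar$ at distances slightly adjusted to match the actual distances in $\ybar^j$) to obtain $\zbar^{\ast j} \in A$ whose distances to $\ybar^j$ and to one another match the targets within $\e$. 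Setting $\zbar^\ast = (\zbar^{\ast j})_j / \Uc$, the ultrapower distances equal the targets exactly, since every $\e$-tolerance is met on a $\Uc$-large set.

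For part 4, quantifier elimination is immediate from part 2: two tuples in t.e.c.\ $\Rb$-forests with matching quantifier-free types have matching full types, so on the class of t.e.c.\ $\Rb$-forests every formula is approximable uniformly by quantifier-free formulas, which is QE in the continuous sense. Model completeness follows from QE, and part 1 guarantees that every model of $\RF$ embeds into a model of the t.e.c.\ theory, identifying the latter as the model companion of $\RF$. The main obstacle I anticipate is the bookkeeping in part 3: arranging the perturbed targets so they remain genuine $\Rb$-forests (i.e., so that the 4-point condition is preserved) and verifying that the $\zbar^{\ast j}$ really cohere into an ultraproduct element with exact target distances. The robustness of the 4-point condition under small perturbations, combined with a straightforward ultralimit calculation, should make this step go through without serious difficulty.
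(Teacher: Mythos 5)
Your handling of parts 1, 2, and 4 matches the paper's approach closely: part 1 via a transfinite chain of amalgamations with metric completion at limit stages (though you should make sure the final stage has uncountable cofinality so that the union is already complete, which is why the paper uses a chain of length $\omega_1$); part 2 via the canonical isometry from \cref{prop:closure-isometry-type} plus amalgamation and the t.e.c.\ property to supply each back-and-forth step; and part 4 as a formal consequence of parts 1--3.

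Part 3 is where you have a genuine gap, and you flag it yourself. You reduce the problem to producing, for each index $j$, a finitely generated $\Rb$-forest extension of $\langle \ybar^j\rangle$ ``built to place $\zbar$ at distances slightly adjusted'' to match the target profile, then invoking t.e.c.\ of $A$. But you never construct such an extension, and that construction is the entire content of this part. It is not automatic that an approximate distance profile over $\langle \ybar^j\rangle$ is realizable by an $\Rb$-forest extension: the 4-point condition and the tree structure of $\langle \ybar^j\rangle$ severely constrain where new points can be attached. The paper solves this by analyzing the extension $B \supseteq \langle \abar \rangle$ directly: $B \setminus \langle \abar\rangle$ decomposes into finitely many components $B_0,\dots,B_{m-1}$, each attached to $\langle\abar\rangle$ at a single point $c_\ell$, and each $c_\ell$ lies at a fixed ratio $r_\ell$ along some segment $[a_{k},a_{k'}]$. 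One then places a copy of $B_\ell$ at the corresponding ratio point $c_\ell^j$ in $\langle\abar^j\rangle$, which is a legitimate $\Rb$-forest extension by free amalgamation, and the ratio-based placement guarantees the ultralimit distances are exactly the targets (no $\e$-bookkeeping needed). Your proposal also does not address the case of tuples with infinite pairwise distances, which the paper handles separately by working within finite-distance classes and placing disconnected components of the extension into unused classes of $A$. Without the component decomposition and the infinite-distance bookkeeping, part 3 is not a proof.
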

\begin{proof}
  For \ref{mc-1}, first note that by \cref{fact:preservations} and \cref{prop:R-forest-intersection-AE}, closures of unions of chains of $\Rb$-forests are $\Rb$-forests. By \cref{prop:RF-JEP-AP}, this means that given an $\Rb$-forest $A$, we can build a chain $(A_i)_{i<|A|}$ of $\Rb$-forests with the property that any finitely generated extension $C$ of any finitely generated substructure $B$ of $A$ has an embedding into $\overline{\bigcup_{i<|A|}A_i}$ that fixes $B$. Repeating this, we can build a chain $(A'_j)_{j<\omega_1}$ of $\Rb$-forests extending $A$ with the property that $A'_{j+1}$ embeds any finitely generated substructure of a finitely generated substructure of $A'_j$. The union is then metrically closed (since $\omega_1$ has uncountable cofinality) and so is a t.e.c.\ extension of $A$.

  For \ref{mc-2}, fix $(A,\abar)$  and $(B,\bbar)$ as in the statement of the proposition. By \cref{prop:closure-isometry-type}, $\langle \abar \rangle$ and $\langle \bbar \rangle$ are isometric by an isometry $f$ satisfying $f(a_i) = b_i$ for each $i<|\abar|$. Back-and-forth equivalence is immediate from the definition of type--existential closure and \cref{prop:RF-JEP-AP}. Finally the fact that back-and-forth implies elementary equivalence is proven in the same manner as it is in discrete logic.

  For \ref{mc-3}, fix an $\Rb$-forest $A$ and an ultrafilter $\Uc$ on an index set $I$. %
  Fix a finite $n$-tuple $\abar \in A^\Uc$ and assume that $d(a_j,a_k)<\infty$ for each $j<k<n$. Let $(\abar^i)_{i\in I}$ be a family of tuples in $A$ corresponding to $\abar$. We may assume without loss of generality that for each $i\in I$, $d(a_j^i,a_k^i) < \infty$ for each $j<k<\omega$. Let $B$ be a finitely generated $\Rb$-tree extending $\langle \abar \rangle$. By \cref{prop:projection}, we know that $B \setminus \langle \abar \rangle$ has finitely many connected components $B_0,\dots,B_{m-1}$ with the property that for each $j<m$, $\overline{B_j}$ is $B_j$ together with a single point $c_j \in \langle \abar \rangle$. For each $c_j$, we can find $k(j,0)$ and $k(j,1)$ less than $n$ such that $c_j \in [a_{k(j,0)},a_{j(j,1)}]$. Let $r_j \in [0,1]$ be the unique value such that $d(a_{k(j,0)},c_j) = r_j \cdot d(a_{k(j,1)},c_j)$.

  For each $i \in I$, we can find points $c^i_j$ such that  $d(a_{k(j,0)}^i,c_j^i) = r_j \cdot d(a_{k(j,1)}^i,c_j^i)$. By the amalgamation property and the fact that $A$ is t.e.c., we can find $B_0^i,\dots,\allowbreak B_{m-1}^i$ disjoint from $\langle \abar^i \rangle$ such that for each $j<m$, $B_j^i\cup \{c_j^i\}$ is an $\Rb$-tree. Let $\bbar$ be a finite tuple in $B$ such that $\langle \abar\bbar \rangle = B$ and $\bbar \cap \langle \abar \rangle = \varnothing$. For $i \in I$, let $\bbar^i$ be the images of $\bbar$ under the isometries between $B_j$ and $B_j^i$. Finally let $\bbar^\Uc$ be the element of $A^\Uc$ corresponding to the family $(\bbar^i)_{i \in I}$. By construction we have that $\abar\bbar$ and $\abar\bbar^\Uc$ have the same pairwise distances, so by \cref{prop:closure-isometry-type}, there is an isometry taking $B=\langle \abar\bbar \rangle$ to $\abar\bbar^\Uc$ that fixes every element of $\langle \abar \rangle$.

  Now finally for an arbitrary tuple $\abar$ perform the above construction for each finite distance class of $\abar$ and for any connected component $C$ of an extension $B \supseteq \langle \abar \rangle$ not connected to $\langle \abar \rangle$, embed $C$ into some finite distance class of $A$ not used elsewhere in the embedding. Since we can do this for any finite $\abar$ and any finitely generated $B \supseteq \langle \bbar \rangle$, we have that $A^\Uc$ is t.e.c.

  For \ref{mc-4}, note that by \ref{mc-2} and \cref{prop:closure-isometry-type}, we have that the type of any tuple $\abar$ in a t.e.c.\ $\Rb$-forest is determined by its quantifier-free type. By \ref{mc-3} this happens in some $\aleph_0$-saturated model of the theory. Therefore the theory has quantifier elimination. Since this theory has quantifier elimination, it is the model companion of $\RF$ by \ref{mc-1}.
\end{proof}

\begin{defn}
  Let $\RFs$ denote the model companion of $\RF$.
\end{defn}

Recall that in a type space $S_1(A)$, the induced metric on $S_1(A)$, written $\partial$, is defined by $\partial(p,q) = \inf\{d(a,b) : a \models p,~b\models q\}$. This concept is still well defined when we allow metric structures with extended metrics.

\begin{prop}\label{prop:RF-stable}
  $\RFs$ is strictly stable.
\end{prop}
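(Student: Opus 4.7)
Proof plan. My plan is twofold: establish stability of $\RFs$ by quantifier-free type counting, and refute superstability by exhibiting an infinite dividing chain along a geodesic ray in the richly branching monster.

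For stability, I would leverage Proposition~\ref{prop:model-companion-QE}\,(\ref{mc-4}) to reduce types to quantifier-free types. Given $M \models \RFs$, Proposition~\ref{prop:projection} shows that any element $x$ at finite distance from $M$ admits a unique projection $\pi_M(x) \in M$. So a quantifier-free 1-type $p$ over $M$ is determined either by the pair $(\pi_M(x), d(x,\pi_M(x))) \in M \times [0,\infty)$ in the finite-distance case, or by the single trivial datum ``new component'' in the infinite-distance case; an analogous inventory for $n$-types tracks at most $n-1$ additional internal branch points where the arcs $[x_i, M]$ may coalesce off of $M$. This parametrization gives $\dc(S_n(M)) \leq \dc(M) + \aleph_0$, yielding stability in all sufficiently large cardinals.

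For non-superstability, I would construct an infinite dividing chain. Fix $(a_n)_{n < \omega}$ in a sufficiently saturated $\M \models \RFs$ with $d(a_i, a_j) = |i - j|$ and $a_n \in [a_0, a_{n+1}]$. By the amalgamation property (Proposition~\ref{prop:RF-JEP-AP}) together with the back-and-forth characterization of t.e.c.\ $\Rb$-forests (Proposition~\ref{prop:model-companion-QE}\,(\ref{mc-2})), for each $n$ one can find an $A_n$-indiscernible sequence $(a_{n+1}^{(i)})_{i < \omega}$ (where $A_n \coloneqq \{a_0,\ldots,a_n\}$) of copies of $a_{n+1}$ lying on distinct branches past $a_n$, hence pairwise at distance $2$. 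The restricted formula $\psi(x,y) \coloneqq \mxmnub{|d(x,y)-\tfrac12|}$ makes the family $\{\psi(x,a_{n+1}^{(i)}) = 0\}_{i<\omega}$ pairwise inconsistent, since any joint witness $x$ would force $d(a_{n+1}^{(i)}, a_{n+1}^{(j)}) \leq 1 < 2$ via the triangle inequality in the $\Rb$-tree generated by $A_n, x, a_{n+1}^{(i)}, a_{n+1}^{(j)}$. Hence $\psi(x, a_{n+1}) = 0$ divides over $A_n$, and assembling these across $n$ produces an infinite dividing chain.

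The main obstacle will be converting the dividing chain into a precise witness of non-superstability: since $\Rb$-forests carry extended metrics, no element of $\M$ lies ``past all $a_n$'' at finite distance, so one must argue carefully that the chain obstructs the existence of a finite canonical base for a suitable (possibly partial or imaginary) limit type over $\{a_n : n < \omega\}$, rather than being a sequence of independent local dividing events.
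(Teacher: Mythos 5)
Your stability argument is on the right track in using quantifier elimination (\cref{prop:model-companion-QE}) and the projection map (\cref{prop:projection}) to parametrize $1$-types over $M$ by pairs $(\pi_M(x),d(x,M))$, but the bound $\dc(S_n(M))\leq \dc(M)+\aleph_0$ is incorrect: the parametrization is a bijection, not a $\partial$-isometry. As the paper's proof computes, when $\pi_M(b_0)\neq\pi_M(b_1)$ the $\partial$-distance is $d(b_0,M)+d(\pi_M(b_0),\pi_M(b_1))+d(b_1,M)$, so all types at fixed distance $\tfrac12$ from $M$ with \emph{distinct} projection points are pairwise $\partial$-distance $\geq 1$, making them a uniformly discrete subset of $S_1(M)$ of cardinality up to $|M|$. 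The correct bound is therefore $\dc(S_1(M))\leq |M|+\aleph_0\leq \dc(M)^{\aleph_0}$, which yields $\lambda$-stability only for $\lambda$ with $\lambda^{\aleph_0}=\lambda$; your stronger bound would in fact imply superstability, contradicting the very thing you set out to prove.

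This error is not cosmetic, because the phenomenon you've erased (projection points of small metric separation producing types of large $\partial$-separation) is exactly the mechanism the paper uses for non-superstability: take $\kappa$ with $\kappa^{\aleph_0}>\kappa$, build $A\models\RFs$ with $\dc(A)=\kappa$ and $|A|=\kappa^{\aleph_0}$, and read off $\dc(S_1(A))\geq\kappa^{\aleph_0}>\kappa$. Your alternative approach via a dividing chain along a unit-step geodesic ray has the obstruction you yourself flag --- there is no limit type at finite distance, so the local dividing events do not assemble into a single type that forks persistently --- and it is not clear how to repair it: shrinking the step sizes so a limit exists makes the dividing happen at scales tending to $0$, which does not violate the metric ($\varepsilon$-forking) formulation of superstability for any fixed $\varepsilon>0$. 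The paper's $\partial$-counting argument sidesteps all of this and handles stability and strict non-superstability in one stroke.
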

\begin{proof}
  By Propositions~\ref{prop:projection} and \ref{prop:model-companion-QE}, we know that given any model $A$ of $\RFs$ we know that there is a unique type in $S_1(A)$ of an element with infinite distance to any element of $A$ and for any $b$ in the monster with finite distance to $A$, $\tp(b/A)$ is uniquely determined by $\pi_A(b)$ and $d(b,A)$. This immediately gives that $\RFs$ is stable. To see that it is strictly stable, note that for any $b_0,b_1$ both with finite distance to $A$, we have that $\partial(\tp(b_0/A),\tp(b_1/A)) = |d(b_0,A)-d(b_1,A)|$ if $\pi_A(b_0) = \pi_A(b_1)$ and $\partial(\tp(b_0/A),\tp(b_1/A)) = d(b_0,A) + d(\pi_A(b_0),\pi_A(b_1)) + d(b_1,A) = d(b_0,b_1)$ if $\pi_A(b_0) \neq \pi_A(b_1)$. This immediately allows us to find for any infinite $\kappa$ a model $A$ of $\RFs$ and a family $(b_i)_{i<\kappa^{\aleph_0}}$ of elements such that $\partial(\tp(b_i/A),\tp(b_j/A)) \geq 1$ for each $i<j<\kappa^{\aleph_0}$, implying that $\RFs$ is not superstable.
\end{proof}

\begin{prop}\label{prop:RF-ind}
  For any sets $A$, $B$, and $C$ of parameters, $B \ind_A C$ if and only if $\langle AB \rangle\cap \langle AC \rangle = \langle A \rangle$. 
\end{prop}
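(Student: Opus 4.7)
\noindent
\emph{Plan of proof.} My plan is to define $\ind^{\ast}$ on small sets by $B \ind^{\ast}_A C$ iff $\langle AB \rangle \cap \langle AC \rangle = \langle A \rangle$ and show that $\ind^{\ast} = \ind$. Since $\RFs$ is stable by \cref{prop:RF-stable}, nonforking is the unique ternary relation satisfying invariance, symmetry, monotonicity, finite character, local character, extension, and stationarity over $\Rb$-forest-closed bases; it therefore suffices to verify each of these axioms for $\ind^{\ast}$.

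First I would dispatch the easy axioms, all of which fall out of \cref{lem:closure-closure} (realizing $\langle X \rangle$ as the metric closure of arcs between pairs in $X$ at finite distance). Invariance follows because automorphisms of the monster preserve arcs and distances; monotonicity, symmetry, and finite character are immediate from the description of $\langle X \rangle$ via finite subsets; and local character follows because any point in $\langle AB \rangle \cap \langle AC \rangle \setminus \langle A \rangle$ is obtained from countably many elements of $A \cup B \cup C$, so failure of $\ind^{\ast}$ is always witnessed over a countable $A_0 \subseteq A$.

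Next I would address stationarity, which is the technical heart. Assume $A = \langle A \rangle$ and fix a tuple $\bbar$ satisfying $\langle A\bbar \rangle \cap \langle AC \rangle = \langle A \rangle$; the claim is that this pins down $\tp(\bbar/AC)$ given $\tp(\bbar/A)$ and the configuration of $C$ over $A$. By quantifier elimination (\cref{prop:model-companion-QE}), the type is determined by the pairwise distances among $\bbar \cup AC$. For each $b_i$ and each $c \in AC$, the disjointness condition combined with \cref{prop:projection} forces $\pi_{\langle AC \rangle}(b_i) \in \langle A \rangle$, hence equal to $\pi_{\langle A \rangle}(b_i)$; the distance $d(b_i,c)$ then decomposes as $d(b_i,\pi_{\langle A \rangle}(b_i)) + d(\pi_{\langle A \rangle}(b_i),\pi_{\langle A \rangle}(c)) + d(\pi_{\langle A \rangle}(c),c)$ when $b_i$ and $c$ lie in the same connected component and equals $\infty$ otherwise---quantities already determined by $\tp(\bbar/A)$ and $\tp(C/A)$. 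This yields stationarity; extension then follows by amalgamating $\langle A\bbar \rangle$ with $\langle AC \rangle$ over $\langle A \rangle$ via \cref{prop:RF-JEP-AP}.

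The hard part will be transitivity: that $\bbar \ind^{\ast}_A C$ and $\bbar \ind^{\ast}_{AC} D$ together imply $\bbar \ind^{\ast}_A CD$. Geometrically, a hypothetical witness $x \in \langle A\bbar \rangle \cap \langle ACD \rangle \setminus \langle A \rangle$ would lie on an arc between two elements of $A\bbar$ and simultaneously on an arc between two elements of $ACD$; the plan is to invoke the Steiner-point/median structure of $\Rb$-trees (via Lemmas~3.20 and~3.22 of \cite{Evans2008}, which were already used in the proofs of \cref{prop:projection} and \cref{prop:RF-JEP-AP}) together with \cref{prop:projection} to push $x$ first into $\langle AC \rangle$ using $\bbar \ind^{\ast}_{AC} D$ and then into $\langle A \rangle$ using $\bbar \ind^{\ast}_A C$, contradicting the choice of $x$. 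Once all the axioms are verified, uniqueness of nonforking in stable theories gives $\ind^{\ast} = \ind$.
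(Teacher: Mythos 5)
Your proposal takes a genuinely different route from the paper. The paper's proof is two sentences: it observes that if $B \ind_A C$ then $\langle ABC\rangle$ must be the free amalgamation of $\langle AB\rangle$ and $\langle AC\rangle$ over $\langle A\rangle$ (hence the intersection is $\langle A\rangle$), and then derives the converse from quantifier elimination together with stationarity of nonforking. You instead define $\ind^{\ast}$ by the intersection condition and verify the full list of abstract axioms that characterize nonforking in a stable theory, which is longer but makes explicit what the paper compresses into ``clearly.'' Both strategies work, and yours has the pedagogical advantage of not presupposing that the reader can immediately see why nonforking forces free amalgamation; the paper's approach is faster precisely because it gets to reuse stability of $\RFs$ (\cref{prop:RF-stable}) and quantifier elimination (\cref{prop:model-companion-QE}) as black boxes rather than reproving them in disguised form.

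Two points to tighten. First, your statement of \emph{local character} is off: you say that ``failure of $\ind^{\ast}$ is always witnessed over a countable $A_0 \subseteq A$,'' but local character requires that for finite $\bbar$ and any $C$ there is a small $C_0 \subseteq C$ over which $\bbar \ind^{\ast}_{C_0} C$ \emph{holds}. The fact is true for $\ind^{\ast}$---the projections $\pi_{\langle C\rangle}(b_i)$ live in the closure of arcs between pairs from $C$, so by \cref{lem:closure-closure} a countable $C_0$ captures them, and then compatibility of projections (\cref{prop:projection}) gives $\langle C_0\bbar\rangle \cap \langle C\rangle = \langle C_0\rangle$---but you should prove this statement, not the one about witnessing failure. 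Second, ``finite character is immediate'' needs a small geometric argument: given $x \in \langle AB\rangle \cap \langle AC\rangle \setminus \langle A\rangle$ with $d(x,\langle A\rangle) = r > 0$, you approximate $x$ to within $\e < r/4$ by points $y \in \langle AB_0\rangle$ and $z \in \langle AC_0\rangle$ with $B_0, C_0$ finite, check that $\pi_{\langle A\rangle}(y) = \pi_{\langle A\rangle}(z) = \pi_{\langle A\rangle}(x)$, and then take the median of $y$, $z$, and $\pi_{\langle A\rangle}(x)$ to land a point of $\langle AB_0\rangle \cap \langle AC_0\rangle$ outside $\langle A\rangle$. With those repairs the verification goes through and the uniqueness theorem for nonforking in stable theories finishes the job.
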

\begin{proof}
  Given \cref{prop:RF-JEP-AP}, we clearly have that if $B \ind_A C$, then $\langle ABC \rangle$ needs to be the free amalgamation of $\langle AB \rangle$ and $\langle AC \rangle$ over $\langle A \rangle$, implying that $\langle AB \rangle\cap \langle AC \rangle = \langle A \rangle$. By quantifier elimination and stationarity of independence, this implies the converse as well.
\end{proof}

\subsection{Generic \oo-Lipschitz binary predicates}
\label{sec:generic-11}

\begin{defn}
  A function $f: X^2 \to X$ is \emph{\oo-Lipschitz} if for any $a \in X$, $x \mapsto f(a,x)$ and $x \mapsto f(x,a)$ are $1$-Lipschitz functions.
\end{defn}

Note that this condition is equivalent to being $1$-Lipschitz in the $\ell^1$-metric (i.e., $d(x_0x_1,y_0y_1) = d(x_0,y_0) + d(x_1,y_1)$), which is strictly weaker than being $1$-Lipschitz in the max metric. Clearly any \oo-Lipschitz function is $2$-Lipschitz with regards to the max metric. A fact which we will use several times is that if $(f_i)_{i\in I}$ is any family of \oo-Lipschitz $[0,1]$-valued functions on a metric space, then $\sup_{i \in I}f_i(x,y)$ and $\inf_{i \in I}f_i(x,y)$ are also \oo-Lipschitz and $[0,1]$-valued.

\begin{defn}
  Let $\Lc_{\RFR}$ be the language containing an extended metric and a single $2$-Lipschitz $[0,1]$-valued binary predicate $R$.

  Let $\RFR$ be the theory that says that $(A,d) \models \RF$ and $R$ is \oo-Lipschitz. 
\end{defn}

Note that by \cref{prop:R-forest-intersection-AE}, $\RFR$ has an $\forall\exists$-axiomatization. 

The proof of the following proposition makes it clear that we need the \oo-Lipschitz condition to ensure that the class of models of $\RFR$ has the amalgamation property.

\begin{prop}\label{prop:RFB-JEP-AP}
  $\Mod(\RFR)$ has the joint embedding property and the amalgamation property.
\end{prop}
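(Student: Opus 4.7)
The plan is to handle JEP quickly and then spend the effort on AP, where the main issue is extending the binary predicate $R$ across the free amalgamation of the underlying $\Rb$-forests. For JEP, given models $(X_0, R_0)$ and $(X_1, R_1)$ of $\RFR$, I would take the disjoint union $X_0 \sqcup X_1$ as an $\Rb$-forest with all cross-distances equal to $\infty$ (as in the proof of \cref{prop:RF-JEP-AP}) and set $R \equiv 0$ on cross pairs. Any 1-1-Lipschitz constraint relating a pair in $X_0 \times X_0$ to a pair with at least one coordinate in $X_1$ involves an infinite distance and is therefore vacuous, so $R$ remains \oo-Lipschitz.

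For AP, given $A \subseteq B_0$, $A \subseteq B_1$ in $\Mod(\RFR)$, first form the $\Rb$-forest free amalgam $X = B_0 \cup_A B_1$ using \cref{prop:RF-JEP-AP}. Then extend $R$ to all of $X \times X$ by the Kat\v{e}tov-style formula
\[
\widehat{R}(x,y) \;:=\; \mxmnub{\inf\{R_i(x',y') + d(x,x') + d(y,y') : i \in \{0,1\},\ (x',y') \in B_i \times B_i\}}.
\]
Being an infimum of functions each $\ell^1$-Lipschitz of constant $1$ in $(x,y)$, followed by the clamp $\mxmnu{\cdot}$, $\widehat{R}$ is automatically $[0,1]$-valued and \oo-Lipschitz. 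The real content is verifying that $\widehat{R}$ restricted to $B_i \times B_i$ recovers $R_i$, for then $(X, \widehat{R})$ is a model of $\RFR$ amalgamating $B_0$ and $B_1$ over $A$.

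That verification reduces to a joint $\ell^1$-Lipschitz inequality across $B_0$ and $B_1$: for $(x_0,y_0) \in B_0 \times B_0$ and $(x_1,y_1) \in B_1 \times B_1$,
\[
|R_0(x_0,y_0) - R_1(x_1,y_1)| \;\leq\; d(x_0,x_1) + d(y_0,y_1),
\]
with distances computed in the amalgam. When the right-hand side is $\infty$ this is vacuous; when it is finite, the key geometric input, via \cite[Lem.~3.20,~3.22]{Evans2008} and \cref{prop:projection}, is that the geodesic from $x_0$ to $x_1$ in $X$ passes through some $a \in A$ with $d(x_0,a) + d(a,x_1) = d(x_0,x_1)$, and independently some $a' \in A$ with $d(y_0,a') + d(a',y_1) = d(y_0,y_1)$. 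Combining the \oo-Lipschitz properties of $R_0$ on $B_0^2$ and $R_1$ on $B_1^2$ with the agreement $R_0 \res A^2 = R_1 \res A^2$ yields the bound via $|R_0(x_0,y_0) - R_1(a,a')| + |R_1(a,a') - R_1(x_1,y_1)| \leq d(x_0,a) + d(y_0,a') + d(a,x_1) + d(a',y_1)$. Granted this, for $(x,y) \in B_i^2$, taking $(x',y') = (x,y)$ shows $\widehat{R}(x,y) \leq R_i(x,y)$, while the joint Lipschitz bound forces every term in the infimum to be $\geq R_i(x,y)$, so the clamp is inactive and $\widehat{R}(x,y) = R_i(x,y)$.

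The main obstacle, and the reason \oo-Lipschitzness (equivalently $\ell^1$-Lipschitzness) is the right hypothesis rather than a stronger sup-Lipschitz one, is precisely that the geodesics in the two coordinates may enter $A$ at different points, so the witnesses $a$ and $a'$ above must be chosen independently. The $\ell^1$ form of the constraint cleanly adds these two one-dimensional geodesic bounds, whereas a max-based bound would require a single common witness in $A$ and in general would fail.
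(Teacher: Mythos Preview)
Your proposal is correct and follows essentially the same route as the paper: form the free $\Rb$-forest amalgam, extend $R$ by the Kat\v{e}tov infimum, and verify the extension agrees with the given data via a chaining argument through points of $A$ lying on the relevant geodesics. The only cosmetic differences are that the paper routes the chain through both projections $\pi_A(x),\pi_A(z)$ (and $\pi_A(y),\pi_A(w)$) in three steps rather than your two, and derives JEP by amalgamating over the empty structure rather than by a direct disjoint-union construction.
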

\begin{proof}
  Now suppose that $A$, $B_0$, and $B_1$ are models of $\RFR$ with $A \subseteq B_0$ and $A \subseteq B_1$. Let $C$ be the free amalgamation of the underlying $\Rb$-forests of $B_0$ and $B_1$ over $A$. We just need to show that we can extend $R$ to a \oo-Lipschitz function on $C$. Let
  \[
    Q(x,y) = \mxmnu{\inf\{R(z,w)+d(x,z)+d(y,w):R(z,w)~\text{defined}\}}.
  \]
  Since the function $( x,y )\mapsto R(z,w)+d(x,z)+d(y,w)$ is \oo-Lipschitz for each fixed $z$ and $w$, we immediately have that $Q(x,y)$ is \oo-Lipschitz on $C$. It is also $[0,1]$-valued by construction. We just need to argue that for any pair $( x,y )$, if $R(x,y)$ is defined, then $Q(x,y) = R(x,y)$. To do this we just need to show that $R(x,y) \leq R(z,w) + d(y,w) + d(z,w)$ for each $( z,w )$ at which $R$ is defined. If $\{x,y,z,w\} \subseteq B_0$ or $\{x,y,z,w \} \subseteq B_1$, then this is immediate from the fact that $R$ is \oo-Lipschitz, so assume that $\{x,y\} \subseteq B_0$ and $\{z,w\}\subseteq B_1$. If any of the elements of $\{x,y,z,w\}$ have infinite distance to $A$, then the inequality is trivial, so assume that all four of them have finite distances. We now have that $d(x,z) = d(x,\pi_A(x))+d(\pi_A(x),\pi_A(z)) + d(\pi_A(z),z)$ and likewise for $y$ and $w$. Finally, we have that
  \begin{align*}
    |R(x,y) - R(\pi_A(x),\pi_A(y))| &\leq d(x,\pi_A(x)) + d(y,\pi_A(y)),\\
    |R(\pi_A(x),\pi_A(y))- R(\pi_A(z),\pi_A(w))| &\leq  d(\pi_A(x),\pi_A(z)) + d(\pi_A(y),\pi_A(w)), \\
    |R(\pi_A(z),\pi_A(w)) - R(z,w)| &\leq d(\pi_A(z),z) + d(\pi_A(w),w),
  \end{align*}
  whereby $|R(x,y) - R(z,w)|$ is less than or equal to the sum of the three quantities on the right, which is precisely $d(x,z) + d(y,w)$. Therefore $R(x,y) \leq R(z,w) + d(x,z) + d(y,w)$, as required.

  The joint embedding property follows by amalgamating over the empty model of $\RFR$.
\end{proof}

\begin{defn}
  Call a model $A$ of $\RFR$ \emph{type--existentially closed} or \emph{t.e.c.}\ if for any finitely generated $B \subseteq A$ with $B \models \RFR$ and any finitely generated $C \supseteq B$ with $C \models \RFR$, there is an $\Lc_{\RFR}$-isomorphism $f : C \to A$ fixing $B$ pointwise.
\end{defn}

\begin{prop}\label{prop:RFB-model-companion-tec-basic}\ 
  \begin{enumerate}
  \item\label{mcB-1} Any model of $\RFR$ has an extension that is a t.e.c.\ model of $\RFR$.
  \item\label{mcB-2} If $A$ and $B$ are t.e.c.\ models of $\RFR$ and $\abar \in A$ and $\bbar \in B$ are finite $n$-tuples such that there is an $\Lc_{\RFR}$-isomorphism $f: \langle \abar \rangle \to \langle \bbar \rangle$ satisfying $f(a_i) = b_i$ for each $i<n$, then $(A,\abar)$ and $(B,\bbar)$ are back-and-forth equivalent and therefore $(A,\abar)\equiv (B,\bbar)$. In particular, there is a common theory of all t.e.c.\ models of $\RFR$.
  \end{enumerate}
\end{prop}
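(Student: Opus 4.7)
My plan is to mirror the proof of \cref{prop:model-companion-QE} parts \ref{mc-1} and \ref{mc-2}, substituting \cref{prop:RFB-JEP-AP} (amalgamation for $\RFR$) and the $\forall\exists$-axiomatizability of $\RFR$ (observed right before \cref{prop:RFB-JEP-AP}) for their $\Rb$-forest analogues. The extension of the binary predicate $R$ is handled entirely by the amalgamation property, so once that is in hand, the structure of the argument is formally the same.

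For \ref{mcB-1}, I would start with an arbitrary model $A\models\RFR$ and build a chain of $\RFR$-models $A = A_0 \subseteq A_1 \subseteq A_2 \subseteq \cdots$ indexed by $\omega_1$. At each successor stage, I would use \cref{prop:RFB-JEP-AP} to amalgamate over $A_i$ a copy of every isomorphism type of pair $B \subseteq C$ where $B$ is a finitely generated substructure of $A_i$ and $C$ is a finitely generated $\RFR$-extension of $B$; iterating this construction (possibly over more than one successor step) ensures every such $(B,C)$ is realized above $A_i$. At limit stages I take the metric completion of the union, which is still an $\RFR$-model by the $\forall\exists$-axiomatizability of $\RFR$ together with \cref{fact:preservations}. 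Because $\omega_1$ has uncountable cofinality, the union at stage $\omega_1$ is already metrically closed, and by construction any finitely generated extension of any finitely generated substructure has already been embedded at some earlier stage, so the result is t.e.c.

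For \ref{mcB-2}, I would run a standard back-and-forth. Assume $f\colon \langle\abar\rangle \to \langle\bbar\rangle$ is an $\Lc_{\RFR}$-isomorphism with $f(a_i)=b_i$, and fix any $c \in A$. Using \cref{prop:RFB-JEP-AP}, amalgamate $\langle\abar c\rangle$ with $\langle\bbar\rangle$ over $\langle\abar\rangle$ via $f$ to form a finitely generated $\RFR$-structure $D \supseteq \langle\bbar\rangle$ containing an isomorphic copy $c'$ of $c$. Since $B$ is t.e.c., there is an $\Lc_{\RFR}$-embedding $D \hookrightarrow B$ fixing $\langle\bbar\rangle$ pointwise; the image of $c'$ gives the needed element, and $f$ extends to an $\Lc_{\RFR}$-isomorphism $\langle\abar c\rangle \to \langle\bbar c'\rangle$. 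The symmetric forth step is identical, so $(A,\abar)$ and $(B,\bbar)$ are back-and-forth equivalent. Elementary equivalence then follows exactly as in the proof of \cref{prop:model-companion-QE}\ref{mc-2}.

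The only real subtlety, and the step I expect to require the most care, is confirming that the amalgamation really is compatible with the $\ell^1$-Lipschitz predicate $R$ as one passes to limits and metric completions during the construction in \ref{mcB-1}; but this is exactly what the infimum-type formula $Q(x,y)$ from the proof of \cref{prop:RFB-JEP-AP} guarantees, and \oo-Lipschitzness is a closed condition preserved under uniform limits, so metric completion of a union of models of $\RFR$ remains a model of $\RFR$. Once that is noted, everything else is a direct transcription of the proof of \cref{prop:model-companion-QE}.
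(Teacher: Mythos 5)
Your proposal is correct and follows essentially the same route as the paper, which simply says that both parts follow by the argument of \cref{prop:model-companion-QE} parts~\ref{mc-1} and \ref{mc-2} using \cref{prop:RFB-JEP-AP}; you have just written out the $\omega_1$-chain construction and the back-and-forth explicitly. The one small simplification available to you: in part~\ref{mcB-2} the hypothesis already hands you the $\Lc_{\RFR}$-isomorphism $f$, so unlike in \cref{prop:model-companion-QE}\ref{mc-2} there is no need to invoke \cref{prop:closure-isometry-type} to produce it.
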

\begin{proof}
  \ref{mcB-1} and \ref{mcB-2} follow by the same argument as parts \ref{mc-1} and \ref{mc-2} of \cref{prop:model-companion-QE}, using \cref{prop:RFB-JEP-AP}.
\end{proof}

\begin{defn}
  Given two sets $A$ and $B$, a \emph{correlation between $A$ and $B$} is a total, surjective relation $O \subseteq A\times B$ (i.e., a relation such that $(\forall a \in A)(\exists b \in B)O(a,b)$ and $(\forall b \in B)(\exists a \in A)O(a,b)$). Given sets $A$ and $B$ and $n$-tuples $\abar \in A$ and $\bbar \in B$, we'll write $\corr(A,\abar;B,\bbar)$ for the set of correlations $O$ between $A$ and $B$ satisfying that for each $i<n$, $O(a_i,b_i)$. We will write $\corr(A,B)$ for $\corr(A,\varnothing;B,\varnothing)$.

  Given two extended metric structures $A$ and $B$ and a correlation $O \in \corr(A,B)$, the \emph{$K$-truncated $d$-distortion\footnote{Distortion for metrics is normally defined with a factor of $\frac{1}{2}$, but we will not need this here.} of $O$} is the quantity 
  \[
    \dis_d^K(O)=\sup_{O(a,b),O(a',b')}\left|\mxmn{d(a,a')}{0}{K}-\mxmn{d(b,b')}{0}{K}\right|.
  \]
  Given two $\Lc_{\RFR}$-structures $A$ and $B$ and a correlation $O \in \corr(A,B)$, the \emph{$K$-truncated distortion of $O$} is the quantity
  \[
    \dis^K(O)= \max\left(\dis_d^K(O),\sup_{O(a,b),O(a',b')}|R(a,a')-R(b,b')|\right).
  \]
  Given tuples $\abar \in A$ and $\bbar \in B$ of the same length, the \emph{$K$-truncated $\Lc_{\RFR}$-distance between $(A,\abar)$ and $(B,\bbar)$} is the quantity
  \[
    \rKRFR(A,B) = \inf_{O \in \corr(A,\abar;B,\bbar)}\dis^K(A,B).
  \]
  Given an $n$-tuple $\abar$ in an $\Rb$-forest, we write $\langle \abar \rangle_K$ for the set $\bigcup\{[a_j,a_k]:j,k<n,~d(a_j,a_k) < K\}$. We will say that $K$ is \emph{large for $\abar$} if $K \geq 1$ and for any $j,k<n$, $d(a_j,a_k) < \infty$ implies that $d(a_j,a_k) < K$.
\end{defn}

Note that for any finite tuple $\abar$ in an $\Rb$-forest, there is a $K$ such that $\langle \abar \rangle_K = \langle \abar \rangle$.

\begin{defn}  
  Given $a,b \in A \models \RFR$ with $d(a,b) < \infty$ and $r \in [0,1]$, we will write $\interp{a}{r}{b}$ for the unique element of $[a,b]$ satisfying $d(a,\interp{a}{r}{b}) = r\cdot d(a,b)$.
\end{defn}

Recall that by \cref{lem:closure-closure}, for any tuple $\abar$, elements of the form $\interp{a_j}{r}{a_k}$ cover $\langle \abar \rangle$. Also, note that for any $K > 0$ and $r\in [0,1]$, $(a,b) \mapsto \interp{a}{r}{b}$ is a definable function on the definable set\footnote{This is not in general a definable set in an arbitrary metric structure but it is always definable in $\Rb$-forests with a distance predicate of $\max(d(x,y)-K,0)$.} of pairs $(a,b)$ with $d(a,b) \leq K$.

\begin{lem}\label{lem:metric-dis-extension} Fix extended metric spaces $A$ and $B$.
  \begin{enumerate}
  \item\label{met-ext-1} For any subset $A_0\subseteq A$, correlation $O \in \corr(A_0,B)$, and $1$-Lipschitz function $f : B \to [0,1]$, there is a $1$-Lipschitz function $g : A \to [0,1]$ such that for any $(a,b) \in O$ and any $K \geq 1$, $|g(a)-f(b)|\leq \dis^K_d(O)$.
  \item\label{met-ext-2} For any subset $A_0 \subseteq A$ and 1-Lipschitz functions $f: A_0 \to [0,1]$ and $g:A \to [0,1]$, there is a $1$-Lipschitz function $h:A \to [0,1]$ extending $f$ such that $\sup_{a \in A}|h(a)-g(a)| = \sup_{a \in A_0}|f(a)-g(a)|$.
  \item\label{met-ext-3} For any $O \in \corr(A,B)$, let $O^{(2)} = \{((a,a'),(b,b')) : (a,b),(a',b') \in O\}$ and let $d^{+2}$ be the metric $d^{+2}(xy,zw) = d(x,z) + d(y,w)$. For any $K$, $\dis^K_{d^{+2}}(O^{(2)}) \leq 2\dis^K_d(O)$ (where $\dis^K_{d^{+2}}$ is computed with regards to the metric $d^{+2}$).
  \end{enumerate}
\end{lem}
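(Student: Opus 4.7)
The plan is to use McShane--Whitney-style extensions for (1) and (2), and a direct observation about how truncation interacts with sums for (3).

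For (1), I will set $g(a) = \mxmnu{\inf_{(a',b') \in O}(f(b') + d(a,a'))}$. This is an infimum of functions that are $1$-Lipschitz in $a$, clamped to $[0,1]$, hence $1$-Lipschitz and $[0,1]$-valued. For any $(a,b) \in O$, taking $(a',b') = (a,b)$ in the infimum gives $g(a) \leq f(b)$. The lower bound is the only place requiring truncation bookkeeping: for any $(a',b') \in O$, since $f$ is $1$-Lipschitz with values in $[0,1]$ and $K \geq 1$,
\[
f(b) - f(b') \leq \min(1,d(b,b')) \leq \mxmn{d(b,b')}{0}{K} \leq \mxmn{d(a,a')}{0}{K} + \dis_d^K(O) \leq d(a,a') + \dis_d^K(O).
\]
Rearranging gives $f(b') + d(a,a') \geq f(b) - \dis_d^K(O)$, so $g(a) \geq f(b) - \dis_d^K(O)$, yielding $|g(a) - f(b)| \leq \dis_d^K(O)$.

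For (2), let $c = \sup_{a \in A_0}|f(a) - g(a)|$, and let $h_1(a) = \min(1, \inf_{a' \in A_0}(f(a') + d(a,a')))$ and $h_2(a) = \max(0, \sup_{a' \in A_0}(f(a') - d(a,a')))$ be the two standard capped McShane extensions of $f$; both are $1$-Lipschitz $[0,1]$-valued extensions of $f$, and $h_2 \leq h_1$ everywhere. Define $h = \min(h_1, \max(h_2, g))$. Mins and maxes of $1$-Lipschitz $[0,1]$-valued functions stay so, and on $A_0$ we have $h_1 = h_2 = f$, forcing $h = f$. The key estimate is $g(a) - h_1(a) \leq c$ and $h_2(a) - g(a) \leq c$ for every $a$: when the infimum defining $h_1$ lies below $1$ one has $g(a) - h_1(a) = \sup_{a' \in A_0}(g(a) - f(a') - d(a,a'))$, and $g(a) - g(a') \leq d(a,a')$ bounds each term by $g(a') - f(a') \leq c$ (the other inequality is symmetric). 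A three-way case split on whether $g \geq h_1$, $g \leq h_2$, or $h_2 < g < h_1$ at $a$ then yields $|h(a) - g(a)| \leq c$ uniformly on $A$, with equality attained on $A_0$.

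For (3), the decisive observation is the identity $\mxmn{x_1 + x_2}{0}{K} = \mxmn{\mxmn{x_1}{0}{K} + \mxmn{x_2}{0}{K}}{0}{K}$ for $x_1, x_2 \geq 0$: if both $x_i < K$ both sides equal $\min(K, x_1+x_2)$, while if either $x_i \geq K$ both sides equal $K$. Since $(u,v) \mapsto \mxmn{u + v}{0}{K}$ is $1$-Lipschitz in the $\ell^1$-metric, this identity implies
\[
\left|\mxmn{x_1+x_2}{0}{K} - \mxmn{y_1+y_2}{0}{K}\right| \leq \left|\mxmn{x_1}{0}{K} - \mxmn{y_1}{0}{K}\right| + \left|\mxmn{x_2}{0}{K} - \mxmn{y_2}{0}{K}\right|.
\]
Applied to $(x_1,x_2,y_1,y_2) = (d(a,c), d(a',c'), d(b,e), d(b',e'))$ for pairs $((a,a'),(b,b'))$ and $((c,c'),(e,e'))$ in $O^{(2)}$, each term on the right is at most $\dis_d^K(O)$, giving $\dis_{d^{+2}}^K(O^{(2)}) \leq 2\dis_d^K(O)$. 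The main obstacle throughout is the truncation bookkeeping in (1); once that estimate is set up correctly, (2) and (3) reduce to soft manipulations with $1$-Lipschitz functions.
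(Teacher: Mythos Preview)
Your proof is correct. Parts (1) and (3) match the paper's argument essentially verbatim (the paper in fact dismisses (3) as ``immediate from the relevant definitions,'' so your explicit truncation identity is strictly more detailed). The only real difference is in (2): the paper builds $h$ asymmetrically as $h(x) = \min\bigl(g(x)+r,\ \inf_{a'\in A_0}\mxmnu{d(x,a')+f(a')}\bigr)$, i.e.\ it caps $g+r$ by the upper McShane extension alone and then checks $h \geq g-r$ directly, whereas you take the median of $g$ with both McShane extensions $h_1,h_2$ and do a three-case split. Your construction is a bit more symmetric and makes the two-sided bound $|h-g|\leq c$ transparent at the cost of introducing $h_2$; the paper's is shorter but requires the slightly less obvious lower-bound computation. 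Either route is fine.
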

\begin{proof}
  For \ref{met-ext-1}, let $g(x) = \inf_{(a,b) \in O}\mxmnu{d(x,a) + f(b)}$. $g(x)$ is $1$-Lipschitz and $[0,1]$-valued by construction. We also clearly have that for any $(a,b) \in O$, $g(a) \leq f(b)$.  Fix $(a',b') \in O$. If $d(a,a') \geq 1$, then $d(a,a') + f(b') \geq 1$. Otherwise, if $d(a,a') < 1$, we have that $\mxmn{d(b,b')}{0}{K} \leq  d(a,a')+ \dis^K_d(O)$. Since $f$ is $1$-Lipschitz, it is $1$-Lipschitz with regards to the metric $\mxmn{d}{0}{K}$, so we have that $f(b') \geq f(b) - \mxmn{d(b,b')}{0}{K}$. Therefore $f(b') \geq f(b) - d(a,a') - \dis^K_d(O)$ and so $d(a,a')+f(b') \geq f(b) - \dis^K_d(O)$. Since we can do this for any $(a',b') \in O$, we have that $g(a) \geq f(b) - \dis^K_d(O)$, whence $|g(a)-f(b)| \leq \dis^K_d(O)$.

  For \ref{met-ext-2}, let $r = \sup_{a \in A_0}|f(a)-g(a)|$. Let
  \[
    h(x) = \min\left(g(x) + r,\inf_{a \in A_0}\mxmnu{d(x,a)+f(a)}\right). 
  \]
  Clearly we have that $h(a) \leq g(a) + r$ for any $a \in A$. Also note that $h(x)$ is $1$\nobreakdash-\hspace{0pt}Lipschitz and $[0,1]$-valued by construction. By the same argument as in \ref{met-ext-1}, we have that for any $a \in A_0$, $\inf_{a'\in A_0}\mxmnu{d(a,a')+f(a')} = f(a)$. By the choice of $r$, this implies that $h(x)$ extends $f(x)$. We just need to show that $h(a) \geq g(a) - r$ for each $a \in A$. For any $a' \in A_0$, we have that $f(a') \geq g(a')-r$, whence
  \begin{align*}
    \inf_{a'\in A_0}\mxmnu{d(a,a')+f(a')} &\geq \inf_{a'\in A_0}\mxmnu{d(a,a')+g(a') - r} \\
                                          & \geq \inf_{a'\in A_0}\mxmnu{d(a,a')+g(a')} - r\\
                                          & \geq \mxmnu{g(a)} - r = g(a) - r.
  \end{align*}
  Therefore, since $g(a) +r \geq g(a) - r$, we have that $h(a) \geq g(a) - r$, as required.

  \ref{met-ext-3} is immediate from the relevant definitions. %
\end{proof}

\begin{lem}\label{lem:RFB-extension}
  Fix an $n$-tuple $\abar$ and an $(n+1)$-tuple $\bbar c$ in models of $\RFR$. If $K$ is large for $\bbar c$, then for any $\e > 0$, there is a model $\langle \abar e \rangle \models \RFR$ extending $\langle \abar \rangle$ such that $d(e,\langle \abar \rangle) = \infty$ if and only if $d(c,\langle \bbar \rangle) = \infty$ and
  \[
    \rKRFR(\langle \abar e \rangle_K,\abar e; \langle \bbar c \rangle,\bbar c) \leq 4\rKRFR(\langle \abar \rangle_K,\abar;\langle \bbar \rangle,\bbar) + \e.
  \]
\end{lem}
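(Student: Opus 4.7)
The plan is to split on whether $d(c,\langle\bbar\rangle)=\infty$. The infinite case is easier: take $\langle\abar e\rangle$ to be $\langle\abar\rangle$ together with $e$ at infinite distance, and define $R$ on pairs involving $e$ using \cref{lem:metric-dis-extension}(\ref{met-ext-1}) applied separately to the $1$-Lipschitz one-variable functions $R(\cdot,c)$ and $R(c,\cdot)$ along $O_0$ (setting $R(e,e)=R(c,c)$); the resulting distortion is at most $\delta+\e$, well within the required bound. Henceforth assume $d(c,\langle\bbar\rangle)<\infty$. Let $\delta=\rKRFR(\langle\abar\rangle_K,\abar;\langle\bbar\rangle,\bbar)$ and fix $O_0\in\corr(\langle\abar\rangle_K,\abar;\langle\bbar\rangle,\bbar)$ with $\dis^K(O_0)\le\delta+\e/C$ for a large constant $C$ to be chosen at the end. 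Let $p=\pi_{\langle\bbar\rangle}(c)$ (existing by \cref{prop:projection}), and using surjectivity of $O_0$ pick $q\in\langle\abar\rangle_K$ with $O_0(q,p)$. Build $\langle\abar e\rangle$ by attaching a new arc $[q,e]$ of length $d(p,c)$ to $\langle\abar\rangle$, and define $O\in\corr(\langle\abar e\rangle_K,\abar e;\langle\bbar c\rangle,\bbar c)$ as the union of $O_0$ with the parameterized correspondence $\interp{q}{t}{e}\leftrightarrow\interp{p}{t}{c}$ for $t\in[0,1]$.

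A case analysis shows $\dis_d^K(O)\le\delta+\e/C$. For two $O_0$-correlated pairs this is immediate; for two points on the new branch the distortion is $0$ since $d(q,e)=d(p,c)$; and for a mixed pair, say $(x,y)\in O_0$ and $(x',y')=(\interp{q}{t}{e},\interp{p}{t}{c})$, the tree distances add in parallel through $(q,p)$, so the bound reduces to
\[
|\mxmn{d(x,q)+s}{0}{K}-\mxmn{d(y,p)+s}{0}{K}|\le|\mxmn{d(x,q)}{0}{K}-\mxmn{d(y,p)}{0}{K}|
\]
for $s=t\cdot d(q,e)\ge 0$, which holds by the concavity of $t\mapsto\mxmn{t}{0}{K}$ on $[0,\infty)$ (equivalently, non-increasing differences).

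The main obstacle---and the source of the factor $4$---is defining $R$ on $\langle\abar e\rangle^2$ so as to simultaneously extend the given $R$ on $\langle\abar\rangle^2$, be \oo-Lipschitz, and have $R$-distortion at most about $4\delta$ along $O^{(2)}$. The plan is a two-step use of \cref{lem:metric-dis-extension}: first, apply part (\ref{met-ext-1}) with the pair correlation $O^{(2)}$ between $\langle\abar e\rangle_K^2$ and $\langle\bbar c\rangle^2$ (using the $\ell^1$-metric $d^{+2}$) to extend $R\res\langle\bbar c\rangle^2$ to a $1$-Lipschitz $g\colon\langle\abar e\rangle^2\to[0,1]$ with $|g(P)-R(Q)|\le\dis_{d^{+2}}^K(O^{(2)})\le 2(\delta+\e/C)$ for $(P,Q)\in O^{(2)}$, where the last inequality is part (\ref{met-ext-3}); second, apply part (\ref{met-ext-2}) with $f=R\res\langle\abar\rangle^2$ and target $g$ to produce a $1$-Lipschitz $h$ that extends $R$ on $\langle\abar\rangle^2$ and has $\sup|h-g|=\sup_{\langle\abar\rangle^2}|R-g|$. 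The hardest part is bounding $\sup_{\langle\abar\rangle^2}|R-g|$: on $\langle\abar\rangle_K^2$ the triangle inequality through an $O_0^{(2)}$-correlate gives $|R(P)-g(P)|\le(\delta+\e/C)+2(\delta+\e/C)=3(\delta+\e/C)$, but on $\langle\abar\rangle^2\setminus\langle\abar\rangle_K^2$ the trivial bound $|R-g|\le 1$ is too weak, so one must exploit that such pairs are automatically $\ell^1$-far (at distance at least $1$) from the portion of $\langle\abar e\rangle_K^2$ where the constraints bite, letting the Lipschitz slack together with $R\in[0,1]$ absorb the excess. Assembling gives $|h(P)-R(Q)|\le\sup|h-g|+2(\delta+\e/C)\le 4\delta+\e$ after $C$ is taken large enough to absorb the various $\e$-errors, completing the proof.
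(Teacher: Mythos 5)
The proposal follows the paper's proof quite closely: the same two cases by finiteness of $d(c,\langle\bbar\rangle)$, the attachment of an arc of length $d(b^\dagger,c)$ at a point correlated with $\pi_{\langle\bbar\rangle}(c)$, the extension of the correlation along the parameterized arc, the observation that the metric distortion is unchanged, and the two-stage use of \cref{lem:metric-dis-extension} (part~(1) applied to $O^{(2)}$ under $d^{+2}$ together with part~(3), followed by part~(2) to force agreement with $R$ on $\langle\abar\rangle^2$). Your concavity argument for the metric distortion of a mixed pair is a slightly different but correct variant of the paper's subadditivity computation. So the approach is the paper's.

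The resolution you offer for what you correctly flag as the hardest step does not hold up, however. You need to bound $\sup_{\langle\abar\rangle^2}|R-g|$, since by part~(2) this quantity becomes $\sup_{\langle\abar e\rangle^2}|h-g|$ and feeds into the final estimate. Your claim that pairs in $\langle\abar\rangle^2\setminus\langle\abar\rangle_K^2$ are automatically at $\ell^1$-distance at least $1$ from the region where the constraints bite is false: $\langle\abar\rangle_K$ is not metrically isolated inside $\langle\abar\rangle$, and a point of $\langle\abar\rangle\setminus\langle\abar\rangle_K$ can lie arbitrarily close to $\langle\abar\rangle_K$ (for instance, if $d(a_0,a_1)\geq K$, then points of $[a_0,a_1]$ just past $a_0$ are off $\langle\abar\rangle_K$ yet arbitrarily close to $a_0\in\langle\abar\rangle_K$). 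What is actually available, by inspecting the explicit formula for $g$ in the proof of \cref{lem:metric-dis-extension}~(\ref{met-ext-1}) and using the \oo-Lipschitzness of $R$, is a bound of the form $|R(a,a')-g(a,a')|\leq 2\bigl(d(a,\langle\abar\rangle_K)+d(a',\langle\abar\rangle_K)\bigr)+\dis^K(O_0)$, which degrades with distance rather than being absorbed by Lipschitz slack; turning this into a uniform bound requires a further argument that your sketch does not supply. Separately, the arithmetic at the end does not close even granting your stated estimates: you record $\sup_{\langle\abar\rangle_K^2}|R-g|\leq 3(\delta+\e/C)$, and since $\sup|h-g|=\sup_{\langle\abar\rangle^2}|R-g|\geq\sup_{\langle\abar\rangle_K^2}|R-g|$, the last display would give at best $5\delta+\cdots$, not $4\delta+\e$. (The bound on $\langle\abar\rangle_K^2$ can in fact be tightened to $\delta+\e/C$ by arguing directly from the infimum defining $g$ rather than through an intermediate correlated pair, but you would still need to control the supremum off $\langle\abar\rangle_K^2$.)
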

\begin{proof}
  Let $\rho = \rKRFR(\langle \abar \rangle_K,\abar;\langle \bbar \rangle,\bbar)$. Fix $\e > 0$ and fix a correlation $O \in \corr(\langle \abar \rangle_K,\allowbreak\abar;\allowbreak\langle \bbar \rangle,\bbar)$ with $\dis^K(O) \leq \rho+\frac{1}{4}\e$. There are two cases to consider; either $d(c,\langle \bbar \rangle) = \infty$ or $d(c,\langle \bbar \rangle) < \infty$.

  If $d(c,\langle \bbar \rangle) = \infty$, then we will make $\langle \abar \rangle\cup \{e\}$ into a model of $\RFR$ by first setting $d(e,x) = \infty$ for any $x \in \langle \abar \rangle$. We then immediately have that $\dis^K_d(O\cup \{(e,c)\}) = \dis^K_d(O)$. The functions $x \mapsto R(x,c)$ and $x \mapsto R(c,x)$ are $1$-Lipschitz and $[0,1]$-valued on $\langle \bbar \rangle$. By \cref{lem:metric-dis-extension} part \ref{met-ext-1}, we can find $1$-Lipschitz function $f: \langle \abar \rangle \to [0,1]$ and $g: \langle \abar \rangle \to [0,1]$ satisfying that for any $(a,b) \in O$, $|f(a)-R(b,c)| \leq \rho$ and $|g(a)-R(c,b)|\leq \rho$. (In particular, we take $A = \langle \abar \rangle$, $A_0 = \langle \abar \rangle_K$, and $B = \langle \bbar \rangle$.) We then have that defining $R(a,e) = f(a)$, $R(e,a) = g(a)$, and $R(e,e) = R(c,c)$ gives a model of $\RFR$ satisfying $\rKRFR(\langle \abar e \rangle_K,\abar e; \langle \bbar c \rangle,\bbar c) < \rho + \e$.

  If $d(c,\langle \bbar \rangle) < \infty$, first note that $d(c,\langle \bbar \rangle) < K$. Fix some $a^\dagger \in \langle \abar \rangle_K$ such that $(a^\dagger,\pi_{\langle \bbar \rangle}(c)) \in O$. Let $b^\dagger = \pi_{\langle \bbar \rangle}(c)$. Build an $\Rb$-forest $\langle \abar e \rangle$ extending $\langle \abar \rangle$ by taking the free amalgamation that is a copy of $[0,d(b^\dagger,c)]$ glued to $\langle \abar \rangle$ by identifying $0$ with $a^\dagger$ (and letting $e$ be the point $d(b^\dagger,c)$ in $[0,d(b^\dagger,c)]$). Let $O'$ be $O\cup \{(\interp{a^\dagger}{r}{e},\interp{b^\dagger}{r}{c}) : r \in [0,1]\}$. $O'$ is a correlation between $\langle \abar e \rangle_K$ and $\langle \bbar c \rangle$. For any $a \in \langle \abar \rangle_K$, $a' \in \langle \abar e \rangle_K \setminus \langle \abar \rangle_K$, $b \in \langle \bbar \rangle$, and $b' \in \langle \bbar c \rangle\setminus \langle \bbar \rangle$ with $(a,b),(a',b') \in O'$, we have that
  \begin{align*}
    |\mxmnu{d(a,a')}-\mxmnu{d(b,b')}| & \leq |\mxmnu{d(a,a^\dagger)}-\mxmnu{d(b,b^\dagger)}| + |\mxmnu{d(a^\dagger,a')} - \mxmnu{d(b^\dagger,b')}| \\
                                      & \leq |\mxmnu{d(a,a^\dagger)}-\mxmnu{d(b,b^\dagger)}| + 0 \leq \dis^K_d(O).
  \end{align*}
  \begin{sloppypar}
    Therefore $\dis^K_d(O') \leq \dis^K_d(O)$. Since $\dis^K_d(O) \leq \dis^K_d(O')$, we have that $\dis^K_d(O') = \dis^K_d(O)$.
  \end{sloppypar}
  Let $d^{+2}$ be the metrics on $A = \langle \abar e \rangle^2$ and $B = \langle \bbar c \rangle^2$ defined by $d^{+2}(xy,zw) = d(x,z) + d(y,w)$. Let $O'^{(2)} = \{((a,a'),(b,b')) : (a,b),(a',b') \in O'\}$. By \cref{lem:metric-dis-extension} part \ref{met-ext-3}, we have that $\dis^K_{d^{+2}}(O'^{(2)}) \leq 2\dis^K_d(O') \leq 2 \rho + \e$. Recall that \oo-Lipschitz functions on $\langle \abar e \rangle$ are the same thing as $1$-Lipschitz functions on $(A,d^{+2})$. By \cref{lem:metric-dis-extension} part \ref{met-ext-1}, we can find a \oo-Lipschitz function $f: \langle \abar e \rangle^2 \to [0,1]$ satisfying that for any $(a,b),(a',b') \in O'$, $|f(a,a') - R(b,b')| \leq 2\rho + \frac{1}{2}\e$. By \cref{lem:metric-dis-extension} part \ref{met-ext-2}, we can find a \oo-Lipschitz function $Q:\langle \abar e \rangle^2$ extending $R : \langle \abar \rangle^2 \to [0,1]$ such that $\sup_{a,a' \in \langle \abar e \rangle}|Q(a,a') - f(a,a')| = \sup_{a,a' \in \langle \abar e \rangle_K}|R(a,a')-f(a,a')| \leq 2\rho + \frac{1}{2}\e$. Therefore, we now have that for any $(a,b),(a',b') \in O'$,
  \begin{align*}
    |Q(a,a')-R(b,b')| & \leq |Q(a,a')-f(a,a')| + |f(a,a')- R(b,b')| \\
    &\leq 2 \rho + \frac{1}{2}\e + 2 \rho +\frac{1}{2}\e = 4 \rho + \e.
  \end{align*}
  Since we can do this for any such $(a,b)$ and $(a',b')$, we have that $\dis^K(O') \leq \max(\dis^K_d(O'),4\rho+\e) = 4\rho+\e$. Therefore $\rKRFR(\langle \abar e \rangle_K,\abar e; \langle \bbar c \rangle,\bbar c) \leq 4\rho + \e$, as required.
\end{proof}

\begin{lem}\label{lem:rho-limit}
  For any $A \models \RFR$, index set $I$, ultrafilter $\Uc$ on $I$, and family $(\abar^i)_{i \in I}$ of $n$-tuples with limit $\abar$ in the ultrapower $A^\Uc$, if $K > 0$ is large for $\abar$, then
  \[
    \lim_{i \to \Uc}\rKRFR(\langle \abar^i \rangle_K,\abar^i;\langle \abar \rangle;\abar) = 0.
  \]
\end{lem}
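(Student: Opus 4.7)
The plan is to build, for each $i$ in some $I_0 \in \Uc$, an explicit correlation $O_i \in \corr(\langle \abar^i \rangle_K, \abar^i; \langle \abar \rangle, \abar)$ and show its $K$-truncated distortion tends to $0$ along $\Uc$. Since $K$ is large for $\abar$, one has $\langle \abar \rangle = \bigcup\{[a_j, a_k] : d(a_j, a_k) < \infty\}$, a finite union of compact arcs. Applying continuous Łoś to the bounded quantities $\mxmn{d(a_j^i, a_k^i)}{0}{K}$ for each of the finitely many pairs $(j,k)$, I can find $I_0 \in \Uc$ such that for every $i \in I_0$ and every $j, k < n$, the inequality $d(a_j^i, a_k^i) < K$ holds iff $d(a_j, a_k) < \infty$; thus $\langle \abar^i \rangle_K$ is also a finite union of arcs indexed by the same set of pairs.

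For $i \in I_0$ I would set
\[
O_i = \left\{\bigl(\interp{a_j^i}{r}{a_k^i},\,\interp{a_j}{r}{a_k}\bigr) : j,k < n,\ d(a_j,a_k)<\infty,\ r \in [0,1]\right\}.
\]
By \cref{lem:closure-closure}, $O_i$ is a total, surjective relation, and taking $r=0$ (or $j=k=\ell$) gives $(a_\ell^i,a_\ell) \in O_i$ for each $\ell < n$, so $O_i \in \corr(\langle \abar^i \rangle_K, \abar^i;\langle \abar \rangle, \abar)$. Every pair of elements of $O_i$ arises from a choice of index data $(j,k,r),(j',k',r')$, so $\dis^K(O_i)$ is the supremum over such choices of the discrepancies $|\mxmn{d(\interp{a_j^i}{r}{a_k^i},\interp{a_{j'}^i}{r'}{a_{k'}^i})}{0}{K} - \mxmn{d(\interp{a_j}{r}{a_k},\interp{a_{j'}}{r'}{a_{k'}})}{0}{K}|$ and the analog for $R$.

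Since interpolation $(a,b,r) \mapsto \interp{a}{r}{b}$ is a definable function on $\{d(a,b) \leq K\}$ (as noted before \cref{lem:metric-dis-extension}), continuous Łoś immediately yields, for each \emph{fixed} tuple $(j,k,j',k',r,r')$, that each of the two discrepancies tends to $0$ along $\Uc$. The main obstacle is upgrading this pointwise-in-$(r,r')$ convergence to uniform convergence on $[0,1]^2$. This is where the Lipschitz structure pays off: $r \mapsto \interp{a}{r}{b}$ is $d(a,b)$-Lipschitz and hence $K$-Lipschitz on our domain, $\mxmn{d}{0}{K}$ is $1$-Lipschitz in each argument, and $R$ is \oo-Lipschitz. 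Composing shows that both functions $(r,r') \mapsto \mxmn{d(\interp{a_j^i}{r}{a_k^i},\interp{a_{j'}^i}{r'}{a_{k'}^i})}{0}{K}$ and $(r,r') \mapsto R(\interp{a_j^i}{r}{a_k^i},\interp{a_{j'}^i}{r'}{a_{k'}^i})$ are $K$-Lipschitz in each coordinate, uniformly in $i \in I_0$ and in the finitely many choices of $(j,k,j',k')$; the same holds with $\abar^i$ replaced by $\abar$.

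Given $\e > 0$, fix a finite $(\e/(4K))$-net $N \subseteq [0,1]^2$. Applying the pointwise Łoś statements at each of the finitely many points of $N$ and the finitely many corner tuples yields $I_1 \in \Uc$ on which every relevant discrepancy at a net point is at most $\e/2$; the Lipschitz estimate then propagates this to all $(r,r') \in [0,1]^2$ with additional error $\e/2$. Hence $\dis^K(O_i) \leq \e$ for every $i \in I_0 \cap I_1 \in \Uc$, and since $\e$ was arbitrary, the $\Uc$-limit of the $K$-truncated distances is $0$.
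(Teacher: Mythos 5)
Your proposal is correct and follows essentially the same route as the paper: it builds the identical correlation $O^i$ from interpolations along arcs, and then supplies the $\e$-net/equi-Lipschitz argument that the paper compresses into the remark that $(x,y,r)\mapsto \interp{x}{r}{y}$ is uniformly definable and Lipschitz on $\{d(x,y)\leq K\}\times[0,1]$ and that $\abar$ is a finite tuple.
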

\begin{proof}
  We may assume without loss of generality that for any $i \in I$ and $j,k<n$, if $d(a_j,a_k) < \infty$, then $d(a^i_j,a^i_k) < K$ and if $d(a_j,a_k) = \infty$, then $d(a^i_j,a^i_k) > 2K$.

  Given $i \in I$, let $O^i \subseteq \langle \abar^i \rangle_K \times \langle \abar \rangle$ be given by
  \[
    O^i = \{(\interp{a^i_j}{r}{a^i_k},\interp{a_j}{r}{a_k}) : j,k<n,~d(a_j,a_k) < \infty,~r \in [0,1]\}.
  \]
  By \cref{lem:closure-closure}, $O^i$ is a correlation between $\langle \abar^i \rangle_K$ and $\langle \abar \rangle$. The result now follows from the fact that $(x,y,r) \mapsto \interp{x}{r}{y}$ is uniformly definable and Lipschitz on $\{(x,y):d(x,y) \leq K\}\times [0,1]$ (with the max metric and a Lipschitz constant that depends on $K$) as well as the fact that $\abar$ is a finite tuple. In particular, for each quadruple $j,k,\ell,m<n$ with $d(a_j,a_k) < \infty$ and $d(a_\ell,a_m) < \infty$,
  \begin{align*}
    \lim_{i \to \Uc}\sup_{r,s \in [0,1]}&|\mxmn{d(\interp{a_j^i}{r}{a_k^i},\interp{a_\ell^i}{s}{a_k^i})}{0}{K} - \mxmn{d(\interp{a_j}{r}{a_k},\interp{a_\ell}{s}{a_k})}{0}{K}| = 0, \\
    \lim_{i \to \Uc} \sup_{r,s,\in [0,1]}&|R(\interp{a_j^i}{r}{a_k^i},\interp{a_\ell^i}{s}{a_k^i})-R(\interp{a_j}{r}{a_k},\interp{a_\ell}{s}{a_k})| = 0.\qedhere
  \end{align*}
\end{proof}

For part of the following proof note that if $\rKRFR(\langle \abar \rangle_K,\abar;\langle \bbar \rangle,\bbar) = 0$ for some finite tuples $\abar$ and $\bbar$, then by compactness it is actually the case that $(\langle \abar \rangle_K,\mxmn{d}{0}{K},R,\abar)$ and $(\langle \bbar \rangle,\mxmn{d}{0}{K},R,\bbar)$ (where $\mxmn{d}{0}{K}$ is the metric $\mxmn{d(x,y)}{0}{K}$) are isomorphic as pointed $\Lc_{\RFR}$-structures \cite[Cor.~1.14 (iii)]{Hansona}.

\begin{prop}\label{prop:RFB-tec-ult}
  Any ultrapower of a t.e.c.\ model of $\RFR$ is t.e.c.
\end{prop}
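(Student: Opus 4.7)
The plan is to fix a t.e.c.\ model $A \models \RFR$, an ultrafilter $\Uc$ on $I$, and $M = A^\Uc$, and to verify the t.e.c.\ extension property directly in $M$. Any finitely generated $B \subseteq M$ and finitely generated extension $C \supseteq B$, both models of $\RFR$, can be written $B = \langle \abar \rangle$ and $C = \langle \abar \cbar \rangle$ for finite tuples, and by induction on $|\cbar|$ it suffices to treat the case $\cbar = c$ a single element. So I need to find $e \in M$ for which $(\langle \abar e \rangle, \abar e)$ is $\Lc_{\RFR}$-isomorphic to $(\langle \abar c \rangle, \abar c)$.

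Fix $K$ large for $\abar c$ and choose representatives $\abar = (\abar^i)_{i\in I}/\Uc$ with $K$ also large for each $\abar^i$ and with the finite/infinite distance pattern of $\abar^i$ matching $\abar$. By \cref{lem:rho-limit}, $\rho_i \coloneqq \rKRFR(\langle \abar^i \rangle_K, \abar^i; \langle \abar \rangle, \abar) \to 0$ along $\Uc$. For each $n \geq 1$, set $X_n = \{i : \rho_i < 2^{-n-3}\} \in \Uc$, taken decreasing in $n$. For $i \in X_n$, apply \cref{lem:RFB-extension} (with $\abar^i$ in the role of $\abar$, the $\abar$ from $M$ in the role of $\bbar$, and $\e = 2^{-n-1}$) to obtain an abstract one-element $\RFR$-extension $\langle \abar^i e^i_{*} \rangle$ of $\langle \abar^i \rangle$ satisfying $d(e^i_*, \langle \abar^i \rangle) = \infty$ iff $d(c, \langle \abar \rangle) = \infty$ and $\rKRFR(\langle \abar^i e^i_* \rangle_K, \abar^i e^i_*; \langle \abar c \rangle, \abar c) \leq 2^{-n}$. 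Because $A$ is t.e.c., realize this abstract extension inside $A$, taking $e^i \in A$ so that $(\langle \abar^i e^i \rangle, \abar^i e^i) \cong (\langle \abar^i e^i_* \rangle, \abar^i e^i_*)$.

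Set $e = (e^i)_{i\in I}/\Uc \in M$. The infinite-distance clause is preserved under the ultraproduct, so $d(e, \langle \abar \rangle)$ behaves correctly. Applying \cref{lem:rho-limit} a second time to the tuple $\abar e$ with representatives $\abar^i e^i$ gives $\rKRFR(\langle \abar^i e^i \rangle_K, \abar^i e^i; \langle \abar e \rangle, \abar e) \to 0$. Composing correlations shows that $\rKRFR$ satisfies a triangle inequality (the distortion of $O_2 \circ O_1$ is bounded by $\dis^K(O_1) + \dis^K(O_2)$), so combining the two limits yields
\[
\rKRFR(\langle \abar e \rangle_K, \abar e;\, \langle \abar c \rangle, \abar c) = 0.
\]
By the compactness remark preceding the proposition (invoking \cite[Cor.~1.14(iii)]{Hansona}), this produces an $\Lc_{\RFR}$-isomorphism of pointed $K$-truncated structures; since $K$ is large for $\abar c$, the truncation is inactive on the pairwise distances of $\abar c$, and the injective-envelope argument of \cref{prop:closure-isometry-type} upgrades this to the desired $\Lc_{\RFR}$-isomorphism $\langle \abar e \rangle \cong \langle \abar c \rangle$ fixing $\abar$.

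The main obstacle is the interplay between the abstract extension produced by \cref{lem:RFB-extension} and its realization inside $A$: one must verify that the abstract extension is a genuine $\RFR$-model (so t.e.c.\ of $A$ applies), that the choice of $K$ uniformly controls distances in every $\abar^i e^i$, and that the $\rKRFR = 0$ conclusion really lifts from the truncated to the full $\Lc_{\RFR}$-structure. The secondary technical nuisance is assembling the $X_n$'s into a single family $(e^i)_{i \in I}$ so that the relevant distortions tend to $0$ along $\Uc$; this is just the standard trick of nesting the $X_n$'s and defining $e^i$ stagewise.
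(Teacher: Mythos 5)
Your proposal follows essentially the same strategy as the paper's proof: pick representatives $\abar^i$ with controlled distortion, apply \cref{lem:RFB-extension} with $\e \to 0$ along $\Uc$, realize the abstract extensions inside $A$ via the t.e.c.\ property, form the ultraproduct element, apply \cref{lem:rho-limit} a second time, and conclude via a triangle inequality for $\rKRFR$ and the compactness remark. The reduction to a single additional element via an outer induction is a mild and valid reorganization of the paper's $m$-fold application of \cref{lem:RFB-extension} per $i$-slot, and the nested $X_n$ bookkeeping is an equivalent way of packaging the case analysis.

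However, there is a genuine gap. You ``choose representatives $\abar = (\abar^i)_{i\in I}/\Uc$ with $K$ also large for each $\abar^i$ and with the finite/infinite distance pattern of $\abar^i$ matching $\abar$.'' This is not achievable in general. If $d(a_j,a_k)=\infty$ in $A^\Uc$ while $d(a^i_j,a^i_k)$ is finite for all $i$ (with $\lim_{i\to\Uc} d(a^i_j,a^i_k)=\infty$), then no other representatives of $a_j$ and $a_k$ can make those distances infinite: any $\tilde a^i_j$ with $d(\tilde a^i_j,a^i_j)\to 0$ eventually lies in the same finite-distance component of $A$ as $a^i_j$, hence as $a^i_k$. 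This is exactly the situation the paper isolates as Case~2 of its proof and handles separately (with $L^i = \min\{d(a^i_j,a^i_k):d(a_j,a_k)=\infty\}$, noting $L^i\to\infty$ and using $\e = 1/L^i$).

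What one \emph{can} arrange by reselecting $\Uc$-large index sets is that the $K$-truncated pattern matches: $d(a^i_j,a^i_k)<K$ whenever $d(a_j,a_k)<\infty$ and $d(a^i_j,a^i_k)>2K$ whenever $d(a_j,a_k)=\infty$. But then your final ``upgrade'' step is too quick. The conclusion $\rKRFR(\langle \abar e \rangle_K,\abar e;\langle\abar c\rangle,\abar c)=0$ only yields an isomorphism of the \emph{$K$-truncated} pointed structures, and $\mxmn{d}{0}{K}$ does not distinguish a distance equal to $K$ from a distance equal to $\infty$. You must separately verify that the untruncated infinite-distance pattern of $\langle\abar e\rangle$ matches that of $\langle\abar c\rangle$. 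This does go through, but requires a short argument: when $d(c,a_j)=\infty$ and $d(c,\langle\abar\rangle)<\infty$, the element $e^i$ is attached (by the construction in \cref{lem:RFB-extension}) to the cluster of some $a^i_{j'}$ with $d(c,a_{j'})<K$, so $d(e^i,a^i_j)\geq d(a^i_{j'},a^i_j)-d(e^i,a^i_{j'})\to\infty$. This is the content the paper attributes to the clause ``for any $b_j\in\bbar$, if $d(b_j,\langle\abar b_{<j}\rangle)=\infty$, then we ensured that $d(b^i_j,\langle\abar b^i_{<j}\rangle)=\infty$'' together with the $K$-truncated isomorphism. Without either the pattern-matching claim (false) or this supplementary argument, your proof has a hole precisely where the paper's Case~2 does the work.
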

\begin{proof}
  Fix a t.e.c.\ model $A$, an index set $I$, and an ultrafilter $\Uc$ on $I$. Fix an $n$-tuple $\abar \in A^\Uc$ corresponding to a family $(\abar^i)_{i \in I}$ and fix some finitely generated extension $\langle \abar \bbar \rangle \supseteq \langle \abar \rangle$. Let $\bbar$ be an $m$-tuple. Fix a $K$ that is large for $\abar\bbar$. By passing to a smaller set of indices if necessary, we may assume that for any $j,k < n$, if $d(a_j,a_k) < \infty$, then $d(a_j^i,a_k^i) < K$ and if $d(a_j,a_k) = \infty$, then $d(a_j^i,a_k^i) > 2K$ for every $i \in I$.

  Fix $i \in I$. There are three cases.
  \begin{itemize}
  \item \emph{Case 1.} If $\langle \abar^i \rangle$ is isomorphic to $\langle \abar \rangle$, then since $A$ is t.e.c., we can find a $\bbar^i \in A$ such that $(\langle \abar^i\bbar^i \rangle,\abar^i\bbar^i)$ is isomorphic to $(\langle \abar\bbar \rangle,\abar\bbar)$. In this case let $L^i = 0$.
  \item \emph{Case 2.} If $\langle \abar^i \rangle$ is not isomorphic to $\langle \abar \rangle$ but $\rKRFR(\langle \abar^i \rangle_K,\abar^i;\langle \abar \rangle,\abar) = 0$, then it must be the case that there are $j,k<n$ such that $d(a_j,a_k) = \infty$ but $d(a^i_j,a^i_k) < \infty$ (by the discussion before the statement of the proposition). Let $L^i$ be $\min\{d(a^i_j,a^i_k):d(a_j,a_k) = \infty\}$. (Note that $L^i > 2K$.) By applying \cref{lem:RFB-extension} inductively $m$ times, we can find a model $\langle \abar^i\bbar^i \rangle \models \RFR$ extending $\langle \abar^i \rangle$ such that $\rKRFR(\langle \abar^i\bbar^i \rangle_K,\abar^i\bbar^i;\langle \abar\bbar \rangle,\abar\bbar) \leq \frac{1}{L^i}$ and for any $j < m$, if $d(b_j,\langle \abar \rangle) = \infty$, then $d(b^i_j,\langle \abar^i \rangle) = \infty$.
  \item \emph{Case 3.} If $\rKRFR(\langle \abar^i \rangle_K,\abar^i;\langle \abar \rangle,\abar) > 0$, then by applying \cref{lem:RFB-extension} inductively $m$ times, we have that for each $i \in I$, there is a model $\langle \abar^i\bbar^i \rangle \models \RFR$ extending $\langle \abar^i \rangle$ such that
    \(
      \rKRFR(\langle \abar^i\bbar^i \rangle_K,\abar^i\bbar^i;\langle \abar\bbar \rangle,\abar\bbar) \leq 5^m \rKRFR(\langle \abar^i \rangle_K,\abar^i;\langle \abar \rangle,\abar)
    \)
    and for any $j < m$, if $d(b_j,\langle \abar \rangle) = \infty$, then $d(b^i_j,\langle \abar^i \rangle) = \infty$.
  \end{itemize}
  Since $A$ is t.e.c., we may furthermore assume that each $\langle \abar^i\bbar^i \rangle$ is actually a substructure of $A$ extending $\langle \abar^i \rangle$. Let $\bbar^\Uc$ be the elements of $A^\Uc$ corresponding to the family $(\bbar^i)_{i \in I}$. One of the 3 cases must happen on a $\Uc$-large set of indices. If case 1 happens on a $\Uc$-large set, then we clearly have that $(\langle \abar\bbar^\Uc \rangle,\abar\bbar^\Uc)$ is isomorphic to $(\langle \abar\bbar \rangle,\abar\bbar)$. If case 2 happens on a $\Uc$-large set, then we must have that $\lim_{i \to \Uc}L^i = \infty$ and therefore $\lim_{i \to \Uc}\rKRFR(\langle \abar^i\bbar^i \rangle_K,\abar^i\bbar^i;\langle \abar\bbar \rangle,\abar\bbar) = 0$. Likewise, if case 3 happens on a $\Uc$-large set, then $\lim_{i \to \Uc}\rho_K(\langle \abar^i \rangle_K,\abar^i;\langle \abar \rangle,\abar) = 0$ by \cref{lem:rho-limit} and therefore $\lim_{i \to \Uc}\rKRFR(\langle \abar^i\bbar^i \rangle_K,\abar^i\bbar^i;\langle \abar\bbar \rangle,\abar\bbar) = 0$ as well.

  By the discussion just before the statement of the proposition, we have that in these second two cases, the pointed $\Lc_{\RFR}$-structures $(\langle \abar\bbar^\Uc \rangle,\mxmn{d}{0}{K},R,\abar\bbar^\Uc)$ and $(\langle \abar\bbar \rangle,\mxmn{d}{0}{K},R,\abar\bbar)$ are isomorphic. Furthermore, we know that for any $b_j \in \bbar$, if $d(b_j,\langle \abar b_{<j} \rangle) = \infty$, then we ensured that $d(b^i_j,\langle \abar b^i_{<j} \rangle) =\infty$. This implies that $(\langle \abar\bbar^\Uc \rangle,\abar\bbar^\Uc)$ and $(\langle \abar\bbar \rangle,\abar\bbar)$ are actually isomorphic as pointed $\Lc_{\RFR}$-structures. Since $\langle \abar \rangle \subseteq \dcl(\abar)$, we have that the isomorphism witnessing this fixes $\langle \abar \rangle$ pointwise.

 So, since we can do this for any $\langle \abar \rangle \subseteq A^\Uc$ and any finite extension $\langle \abar\bbar \rangle\supseteq \langle \abar \rangle$, we have that $A^\Uc$ is t.e.c.
\end{proof}

\begin{defn}
 Let $\RFR^\ast$ be the common theory of t.e.c.\ models of $\RFR$.
\end{defn}

\begin{cor}\label{cor:RFB-model-companion}
  $\RFR^\ast$ is the model companion of $\RFR$. In any model of this theory, the type of a tuple $\abar$ is uniquely determined by the pointed $\Lc_{\RFR}$-isomorphism type of $(\langle \abar \rangle,\abar)$.
\end{cor}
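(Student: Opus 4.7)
The plan is to deduce this from \cref{prop:RFB-model-companion-tec-basic} and \cref{prop:RFB-tec-ult} via a standard model-theoretic argument. First, I would verify mutual model-consistency of $\RFR$ and $\RFR^\ast$: since $\RFR$ is $\forall\exists$-axiomatizable and every t.e.c.\ model is a model of $\RFR$, all axioms of $\RFR$ lie in the common theory $\RFR^\ast$, so every model of $\RFR^\ast$ is a model of $\RFR$; the converse embedding is precisely part \ref{mcB-1} of \cref{prop:RFB-model-companion-tec-basic}.

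The heart of the proof is then showing that the type of a finite tuple $\abar$ in any model of $\RFR^\ast$ is determined by the pointed $\Lc_{\RFR}$-isomorphism type of $(\langle \abar \rangle, \abar)$; this yields the second assertion of the corollary, and since by \cref{lem:closure-closure} and the definability of the interpolation function $\interp{\cdot}{r}{\cdot}$ the pointed structure on $\langle \abar \rangle$ is determined by the quantifier-free type of $\abar$, it also gives quantifier elimination, hence model-completeness, hence that $\RFR^\ast$ is the model companion of $\RFR$. To prove the type-determination statement, suppose $M, N \models \RFR^\ast$ and $\abar \in M$, $\bbar \in N$ satisfy $(\langle \abar \rangle, \abar) \cong (\langle \bbar \rangle, \bbar)$ as pointed $\Lc_{\RFR}$-structures. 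I would pass to t.e.c.\ elementary extensions and invoke \cref{prop:RFB-model-companion-tec-basic}\ref{mcB-2}.

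To produce those extensions, note that by definition of $\RFR^\ast$ there is some t.e.c.\ model $M^\dagger \equiv M$; the continuous Keisler--Shelah theorem then gives ultrafilters $\Uc, \mathcal{V}$ with $M^\Uc \cong (M^\dagger)^{\mathcal{V}}$, and the right-hand side is t.e.c.\ by \cref{prop:RFB-tec-ult}. Thus $M$ has a t.e.c.\ elementary extension $M' \succeq M$, and likewise $N' \succeq N$. Since $\langle \abar \rangle$ is the smallest $\Rb$-forest substructure of the ambient model containing $\abar$ (\cref{prop:R-forest-intersection-AE}) and its metric and $R$-values are simply inherited, the pointed structure $(\langle \abar \rangle, \abar)$ is the same whether computed in $M$ or in $M'$, so the given isomorphism persists in the extensions. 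Then \cref{prop:RFB-model-companion-tec-basic}\ref{mcB-2} yields $(M', \abar) \equiv (N', \bbar)$, and so $\tp^M(\abar) = \tp^{M'}(\abar) = \tp^{N'}(\bbar) = \tp^N(\bbar)$.

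The only nonroutine point is the intrinsic nature of $\langle \abar \rangle$ and of its $\Lc_{\RFR}$-structure, which is exactly what allows the isomorphism hypothesis to propagate to the saturated extensions; everything else is a packaging of previously established facts.
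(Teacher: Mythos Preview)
Your overall approach---produce a t.e.c.\ elementary extension of an arbitrary model of $\RFR^\ast$ via Keisler--Shelah and \cref{prop:RFB-tec-ult}, then invoke \cref{prop:RFB-model-companion-tec-basic}\ref{mcB-2}---matches the paper's proof closely, and your argument for the type-determination statement is correct.

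There is, however, a genuine error in your derivation of model-completeness. You claim that the pointed $\Lc_{\RFR}$-structure on $\langle \abar \rangle$ is determined by the quantifier-free type of $\abar$, and from this deduce quantifier elimination. This is false: the quantifier-free type of $\abar$ records only the values $d(a_i,a_j)$ and $R(a_i,a_j)$, whereas the $\Lc_{\RFR}$-structure on $\langle \abar \rangle$ requires the values $R(\interp{a_i}{r}{a_j},\interp{a_k}{s}{a_\ell})$ for all $r,s \in [0,1]$, and these are not determined by the $R$-values at the endpoints (the \oo-Lipschitz condition leaves plenty of freedom on the interior of the segments). The interpolation functions $\interp{\cdot}{r}{\cdot}$ are definable, but they are not terms in $\Lc_{\RFR}$, so these composite values belong to the full type of $\abar$, not the quantifier-free type. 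In fact $\RFR^\ast$ does \emph{not} have quantifier elimination, and the paper never claims it does; contrast this with $\RF^\ast$, where \cref{prop:closure-isometry-type} makes the analogous reduction to pairwise distances work.

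The gap is easily bridged: from the type-determination statement you have already proved, model-completeness follows directly, with no detour through quantifier elimination. If $C \subseteq D$ are both models of $\RFR^\ast$ and $\cbar \in C$, then $\langle \cbar \rangle$ computed in $C$ equals $\langle \cbar \rangle$ computed in $D$ (since $C$ is already an $\Rb$-forest containing $\cbar$), so $\tp^C(\cbar) = \tp^D(\cbar)$ as both are determined by the same pointed $\Lc_{\RFR}$-structure. This is exactly how the paper concludes.
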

\begin{proof}
  By \cref{prop:RFB-tec-ult}, there is an $\aleph_0$-saturated t.e.c.\ model $A$ of $\RFR^\ast$. By \cref{prop:RFB-model-companion-tec-basic}, for any finite $\abar \in A$, we have that $\tp(\abar)$ is uniquely determined by the pointed isomorphism type of $(\langle \abar \rangle,\abar)$.

  Fix an arbitrary model $B$ of $\RFR^\ast$. By the continuous Keisler-Shelah theorem \cite[Cor.~C.5]{Goldbring2020-GOLCSP-3}, there is an ultrapower $B^\Uc$ of $B$ that is isomorphic to an ultrapower of $A$. Therefore $B^\Uc$ is t.e.c., by \cref{prop:RFB-tec-ult}. For any finite tuple $\bbar \in B$, we have that $\langle \bbar \rangle$ computed in $B$ is the same as $\langle \bbar \rangle$ computed in $B^\Uc$ (since $\langle \bbar \rangle \subseteq \dcl(\bbar)$). Therefore $\tp(\bbar)$ is again uniquely determined by the pointed isomorphism type of $(\langle \bbar \rangle,\bbar)$. This implies that if $C$ and $D$ are models of $\RFR^\ast$ such that $C \subseteq D$, then $C \preceq D$. Therefore, by \cref{prop:RFB-model-companion-tec-basic} part \ref{mcB-1}, $\RFR^\ast$ is the model companion of $\RFR$.
\end{proof}

\begin{prop}\label{prop:RFB-reduct}
  The reduct of $\RFR^\ast$ to the pure metric language is $\RF^\ast$.
\end{prop}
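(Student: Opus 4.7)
The plan is to identify $\RF^\ast$ with the common theory of t.e.c.\ $\Rb$-forests (Proposition~\ref{prop:model-companion-QE}) and $\RFR^\ast$ with the common theory of t.e.c.\ models of $\RFR$ (Corollary~\ref{cor:RFB-model-companion}), and then to show directly that the $\LRF$-reduct of every t.e.c.\ model of $\RFR$ is a t.e.c.\ $\Rb$-forest. Both theories are complete (by applying back-and-forth with the empty tuple in the respective completeness statements), so this reduction will suffice.

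First I would fix $A \models \RFR^\ast$ and note that its $\LRF$-reduct $A'$ is automatically an $\Rb$-forest, since the axioms of $\RF$ are a subset of those of $\RFR$. To show $A'$ is t.e.c., fix a finitely generated $\Rb$-forest $B' = \langle \abar \rangle \subseteq A'$ and a finitely generated extension $C' = \langle \abar\bbar \rangle \supseteq B'$. Equip $B'$ with the restriction $R^A \res B'$, which is automatically \oo-Lipschitz and $[0,1]$-valued, making $B'$ into an $\Lc_{\RFR}$-substructure of $A$.

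The central technical step is to extend $R^A \res B'$ to a \oo-Lipschitz $[0,1]$-valued predicate $\hat{R}$ on all of $C'$. I would reuse the McShane--Whitney-style formula already appearing in the proof of Proposition~\ref{prop:RFB-JEP-AP}:
\[
\hat{R}(x,y) \;=\; \mxmnub{\inf\{R^A(z,w) + d(x,z) + d(y,w) : z,w \in B'\}}.
\]
As an infimum of \oo-Lipschitz $[0,1]$-valued functions (then clamped), $\hat{R}$ is \oo-Lipschitz and $[0,1]$-valued on $C'$, and on $B' \times B'$ it agrees with $R^A$: one inequality comes from taking $(z,w) = (x,y)$, and the other from the \oo-Lipschitzness of $R^A$ on $B'$, exactly as in the cited proof. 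Thus $(C', \hat{R})$ is a finitely generated model of $\RFR$ containing $B'$ as an $\Lc_{\RFR}$-substructure inherited from $A$.

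Finally, since $A$ is t.e.c.\ in $\Mod(\RFR)$, there is an $\Lc_{\RFR}$-embedding $f \colon (C', \hat{R}) \to A$ fixing $B'$ pointwise; forgetting $R$, this is an isometric $\Rb$-forest embedding $C' \to A'$ fixing $B'$ pointwise. Since $B'$ and $C'$ were arbitrary, $A'$ is a t.e.c.\ $\Rb$-forest, hence $A' \models \RF^\ast$. The only point requiring care is verifying that the infimum formula genuinely extends $R^A \res B'$ while remaining \oo-Lipschitz and $[0,1]$-valued, but this is precisely the calculation already carried out in the proof of Proposition~\ref{prop:RFB-JEP-AP}, so there is no real obstacle.
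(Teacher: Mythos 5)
Your proposal is correct, but it takes a genuinely different route from the paper. The paper constructs an alternating chain $A_0\subseteq B_0\subseteq A_1\subseteq B_1\subseteq\cdots$ with each $A_i\models\RFR^\ast$ and each $B_i\models\RF^\ast$ (using \cref{lem:RFB-extension} plus compactness to expand an arbitrary $\Rb$-forest extension of a model of $\RFR$ to a model of $\RFR$), and then observes that the completion of the union is simultaneously a model of $\RFR^\ast$ and, in reduct, of $\RF^\ast$; completeness of both theories finishes the argument. You instead show the stronger and more local statement that the $\LRF$-reduct of \emph{any} t.e.c.\ model of $\RFR$ is itself a t.e.c.\ $\Rb$-forest, by using the McShane--Whitney extension (the same formula already appearing in \cref{prop:RFB-JEP-AP}) to extend $R$ from a finitely generated $\langle\abar\rangle$ to a finitely generated $\langle\abar\bbar\rangle$ and then invoking t.e.c.-ness of $A$ in the $\RFR$ sense. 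This is arguably cleaner and avoids the chain construction entirely; it is plausibly what the paper's footnote is alluding to when it says the step ``can also be proven directly\dots\ relatively easily.'' One small slip: after announcing that the plan is to work with a t.e.c.\ model of $\RFR$, you then write ``fix $A\models\RFR^\ast$,'' but a model of $\RFR^\ast$ need not itself be t.e.c.; you should fix a t.e.c.\ model of $\RFR$ (which exists and models $\RFR^\ast$) so that the final appeal to t.e.c.-ness of $A$ in $\Mod(\RFR)$ is justified. With that fixed, the argument is sound.
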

\begin{proof}
  If $A \models \RFR^\ast$ and $B \supseteq A$ is an $\Rb$-forest, then by \cref{lem:RFB-extension}, for any finite $\abar \in A$ and $\bbar \in B$, $\langle \abar\bbar \rangle$ can be expanded to a model of $\RF$ extending $\langle \abar \rangle$. (Specifically, we can consider the model of $\RF$ whose underlying $\Rb$-forest is $\langle \abar\bbar \rangle$ which has $R(x,y) = 0$ for all $x$ and $y$.\footnote{This can also be proven directly without using \cref{lem:RFB-extension} relatively easily, but we have already proven a strictly harder statement, so there is no point is bothering to write this out.}) By compactness and \cref{prop:R-forest-intersection-AE}, this implies that $B$ can be expanded to a model of $\RFR$.

  This implies that for any model $A_0\models \RFR^\ast$ we can build two sequence $(A_i)_{i<\omega}$ and $(B_i)_{i<\omega}$ such that $A_0\subseteq B_0 \subseteq A_1\subseteq B_1\subseteq \dots$ and for each $i<\omega$, $A_i \models \RFR^\ast$ and $B_i \models \RF^\ast$. We then have that the completion of the union is a model of $\RFR^\ast$ whose pure metric reduct is a model of $\RF^\ast$, whence the theory $\RF^\ast$ is the pure metric reduct of the theory $\RFR^\ast$.
\end{proof}

\subsection{Simplicity itself}
\label{sec:simple}

\begin{prop}\label{prop:RFB-unstable}
  $\RFR^\ast$ is unstable.
\end{prop}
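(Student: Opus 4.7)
The plan is to exhibit the order property for a formula in $\RFR^\ast$, which implies instability. The natural candidate is $\varphi(x,y) = R(x,y)$, with thresholds $r = 0 < 1 = s$: the goal is to find, in some model of $\RFR^\ast$, sequences $(a_i)_{i<\omega}$ and $(b_j)_{j<\omega}$ with $R(a_i,b_j) = 0$ when $i < j$ and $R(a_i,b_j) = 1$ when $i \geq j$.

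The key observation is that when the $a_i$'s and $b_j$'s are all placed at pairwise infinite distance, the \oo-Lipschitz constraint on $R$ becomes vacuous between them, so an arbitrary pattern of $R$-values may be prescribed. Accordingly, I build an $\Lc_{\RFR}$-structure $A_0$ whose underlying set is $\{a_i : i < \omega\} \cup \{b_j : j < \omega\}$, with $d \equiv \infty$ off the diagonal, with $R(a_i, b_j)$ assigned as above, and (say) $R \equiv 0$ on all other pairs. Each finite-distance component of $A_0$ is then a singleton (a degenerate $\Rb$-tree), so $A_0$ is a trivially complete $\Rb$-forest, and since every element is isolated in its component the \oo-Lipschitz condition on $R$ holds vacuously. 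Hence $A_0 \models \RFR$.

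By \cref{prop:RFB-model-companion-tec-basic}, $A_0$ embeds as an $\Lc_{\RFR}$-substructure into a t.e.c.\ model $A$, which is a model of $\RFR^\ast$. Since $R(x,y)$ is atomic, the embedding preserves all of its values, so the prescribed pattern of $R(a_i,b_j)$'s persists in $A$. Thus $R(x,y)$ has the order property in $\RFR^\ast$, proving instability. The only substantive check is that $A_0 \models \RFR$, which is immediate from the pairwise infinite distances; the genericity inherent in passing to the model companion does all the remaining work, and no real obstacle is anticipated.
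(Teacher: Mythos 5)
Your proposal is correct and follows essentially the same route as the paper: both construct the $\Lc_{\RFR}$-structure on $\{a_i, b_j : i,j < \omega\}$ with all distinct pairs at infinite distance (so \oo-Lipschitzness is vacuous), prescribe the order-property pattern on $R(a_i,b_j)$, observe this is a model of $\RFR$, and embed it into a t.e.c.\ model to conclude instability. The paper is just terser, leaving implicit the steps you spell out (that atomic formulas are preserved under embedding, and that the order property for $R$ follows).
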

\begin{proof}
  The $\Lc_{\RFR}$-structure whose universe is $\{a_i,b_i : i < \omega\}$ with all non-zero distances $\infty$, $R(a_i,a_j) = R(b_i,b_j) = 0$ for all $i,j<\omega$, $R(a_i,b_j) = 0$ if $i < j$, and $R(a_i,b_j) = 1$ if $i \geq j$ is a model of $\RFR$. It can be embedded in a model of $\RFR^\ast$, which is therefore unstable.
\end{proof}

Simplicity in the context of compact abstract theories was defined in \cite{SimpInCATs}. This of course applies to the special case of continuous logic. In particular, we have the following.

\begin{fact}[{\cite[Thm.~1.51]{SimpInCATs}}]\label{fact:CAT-simple}
  A theory $T$ is simple if and only if there is a ternary relation $\indast$ of small subsets of the monster $\Ob$ satisfying the following:
  \begin{itemize}
  \item (Invariance) For any $\sigma \in \Aut(\Ob)$ and any $A$, $B$, and $C$, $B \indast_A C$ if and only if $\sigma B \indast_{\sigma A} \sigma C$. 
  \item (Existence) $B \indast_M M$ for all models $M$.
  \item (Finite character) $B \indast_A C$ if and only if $B_0 \indast_A C_0$ for all finite $B_0 \subseteq B$ and $C_0 \subseteq C$.
  \item (Symmetry) $B \indast_A C$ if and only if $C \indast_A B$.
  \item (Restricted transitivity) For models $M \preceq N$, $B \ind_M NC$ if and only if $B \ind_M N$ and $B \ind_N C$.  
  \item (Local character) There exists a $\lambda$ such that whenever $\cf(\mu) > \lambda+|B|$ and $(M_i)_{i<\mu}$ is an increasing sequence of models, there is some $j<\mu$ such that $B \indast_{M_j}\bigcup_{i<\mu}M_i$. 
  \item (Extension) If $B \indast_A C$ and $C' \supseteq C$, then there is $B' \equiv_A B$ such that $B' \indast_AC'$.
  \item (The independence theorem over models) For any model $M$, if $B_0 \indast_M B_1$, $C_0\indast_M B_0$, $C_1 \indast_M B_1$, and $C_0 \equiv_{M} C_1$, then there is a $C \indast_M B_0B_1$ with $B \equiv_{MA_i}B_i$ for both $i<2$.
  \end{itemize}
  Furthermore, in this case $\indast$ coincides with non-dividing.
\end{fact}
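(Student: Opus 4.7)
The plan is to carry out a Kim--Pillay-style characterization adapted to the CAT/continuous setting, splitting the argument into two directions. In the forward direction I would assume $T$ is simple (defined via the absence of a dividing chain or the tree property) and take $\indast$ to be non-dividing, verifying each of the listed properties for it. Invariance, finite character, existence, and extension are either built into the definition of non-dividing or follow from compactness arguments that transfer directly from discrete to continuous logic, once one replaces formulas by definable predicates and ordinary inconsistency by uniform approximate inconsistency. For symmetry, restricted transitivity, and local character I would adapt the usual Kim-style Morley-sequence and coheir arguments to the CAT setting. The most substantial property to verify is the independence theorem over models, which in the discrete setting is the Kim--Pillay theorem and whose continuous/CAT analogue is the core technical content of \cite{SimpInCATs}.

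In the reverse direction I would assume $\indast$ satisfies the axioms and first prove that $\indast$ coincides with non-dividing. For the inclusion that non-dividing implies $\indast$-independence, I would use extension: given $B$ non-dividing over a model $M$ with $C$, extension produces an $\indast$-independent copy $B'$ of $\tp(B/M)$ over $C$, and invariance together with the non-dividing hypothesis forces $B' \equiv_{MC} B$, so $B$ itself is $\indast$-independent from $C$ over $M$. For the converse, suppose $B$ is not $\indast$-independent from $C$ over $M$; I would extract an $M$-indiscernible sequence $(C_i)_{i<\omega}$ in $\tp(C/M)$ and iteratively apply the independence theorem to realize $\tp(B/MC_i)$ simultaneously for all $i$, forcing a contradiction with whatever uniform family of predicates witnesses the non-independence unless dividing already holds. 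Once equality with non-dividing is established, the local character axiom for $\indast$ becomes local character for non-dividing, which is exactly simplicity; uniqueness of $\indast$ falls out of the same comparison.

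I expect the main obstacle throughout to be the independence theorem: in the forward direction it is the most delicate amalgamation statement to set up under the simplicity hypothesis, and in the reverse direction it is the tool by which failure of $\indast$-independence is converted into genuine dividing. The CAT/continuous setting adds the technical burden of working with $[0,1]$-valued predicates and uniform approximate inconsistency instead of finite inconsistency, so every amalgamation in the argument must carry quantitative estimates. The remaining verifications parallel the discrete Kim--Pillay proof and should amount to bookkeeping.
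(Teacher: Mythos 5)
This statement is presented in the paper as a \emph{cited fact}---it is Theorem~1.51 of Ben Yaacov's \emph{Simplicity in compact abstract theories}---and the paper gives no proof of it at all. So there is no ``paper's own proof'' to compare against; you are attempting to reprove a substantial theorem from the literature, and the comparison has to be against the standard Kim--Pillay argument as developed by Ben Yaacov for CATs.

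As such a reproof, your sketch is at the level of a roadmap rather than an argument, and one step in the reverse direction is wrong as stated. You claim that if $B$ does not divide over $M$ with $C$, then extension yields $B' \equiv_M B$ with $B' \indast_M C$, and ``invariance together with the non-dividing hypothesis forces $B' \equiv_{MC} B$.'' That implication does not hold: non-dividing of $\tp(B/MC)$ over $M$ places no constraint tying $B'$ to $B$ over $C$, so there is no reason $B'$ and $B$ should realize the same type over $MC$. The actual canonicity argument is more delicate---one typically first uses local character of $\indast$ together with the already-established implication ``$\indast$ implies non-dividing'' to deduce that $T$ is simple, and only then invokes the independence theorem (via a Morley-sequence ``zig-zag'') to pin $\indast$ down as non-forking/non-dividing. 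Your forward direction (verify the axioms for non-dividing under simplicity) is the right shape, but ``adapt the usual coheir arguments'' and ``the independence theorem is the core technical content'' is just pointing at the hard work rather than doing it; in the CAT setting in particular, symmetry and the independence theorem require the machinery of $D$-ranks and approximate inconsistency that your sketch defers entirely. In short: there is nothing in the paper to match against, and your sketch both contains a genuine logical gap in the canonicity step and leaves all of the technically substantive verifications unaddressed.
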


\begin{lem}\label{lem:Lip-check}
  If $A$ is an $\Rb$-forest, $B,C \subseteq A$ are $\Rb$-forests such that $A = B \cup C$, and $f : A \to [0,1]$ is a function that is $1$-Lipschitz on $B$ and $1$-Lipschitz in $C$, then $f$ is $1$-Lipschitz on all of $A$.
\end{lem}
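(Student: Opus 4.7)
The plan is to reduce to the only nontrivial case and then use a connectedness argument on the unique arc between the two points.

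Fix $x, y \in A$. If $x, y \in B$ or $x, y \in C$, the bound $|f(x)-f(y)| \leq d(x,y)$ follows from the hypothesis, so assume (without loss of generality) that $x \in B \setminus C$ and $y \in C \setminus B$. If $d(x,y) = \infty$, then since $f$ is $[0,1]$-valued we have $|f(x)-f(y)| \leq 1 \leq d(x,y)$, so we may further assume $d(x,y) < \infty$, and in particular $x$ and $y$ lie in the same finite-distance class of $A$, which is an $\Rb$-tree. Consider the unique arc $[x,y] \subseteq A$.

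The key step is to find a point $z \in B \cap C$ lying on $[x,y]$. Since $B$ and $C$ are $\Rb$-forests, they are complete in their induced metrics and are therefore closed as subspaces of $A$. Hence $B \cap [x,y]$ and $C \cap [x,y]$ are closed in $[x,y]$; they contain $x$ and $y$ respectively; and because $A = B \cup C$ their union is $[x,y]$. Since $[x,y]$ is isometric to a real interval and thus connected, two nonempty closed subsets covering it cannot be disjoint, so there exists $z \in B \cap C \cap [x,y]$.

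Once $z$ is obtained, the argument finishes by the triangle inequality together with the defining property of arcs in an $\Rb$-tree, $d(x,z) + d(z,y) = d(x,y)$. Applying $1$-Lipschitzness of $f$ on $B$ to the pair $(x,z)$ and $1$-Lipschitzness on $C$ to $(z,y)$ yields
\[
  |f(x)-f(y)| \leq |f(x)-f(z)| + |f(z)-f(y)| \leq d(x,z) + d(z,y) = d(x,y),
\]
as desired.

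The only real obstruction is the connectedness argument producing $z$; everything else is bookkeeping. This requires knowing that subforests are closed in the ambient forest, which is immediate from the completeness clause in the definition of $\Rb$-forest, and that the finite-distance class containing $x,y$ is an $\Rb$-tree, which is exactly the definition used throughout the section.
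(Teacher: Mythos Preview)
Your proof is correct. The approach is close in spirit to the paper's but with a slightly different (and more self-contained) execution.

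The paper fixes $b\in B$, $c\in C$, sets $D=B\cap C$, and uses the projections $\pi_D(b)$ and $\pi_D(c)$ to write
\[
|f(b)-f(c)|\le d(b,\pi_D(b))+d(\pi_D(b),\pi_D(c))+d(\pi_D(c),c)=d(b,c),
\]
appealing implicitly to the free-amalgamation metric formula established earlier (\cref{prop:projection} and the proof of \cref{prop:RFB-JEP-AP}) to justify the final equality. Your argument instead locates a single point $z\in B\cap C$ on the arc $[x,y]$ directly, via the elementary observation that $B\cap[x,y]$ and $C\cap[x,y]$ are nonempty closed sets covering the connected interval $[x,y]$, and then uses a two-term triangle inequality. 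This avoids invoking the projection machinery and makes the existence of the intermediate point explicit; conversely, the paper's version plugs neatly into the projection/amalgamation framework already built up in the section. Either way the core idea is the same: bridge $B$ and $C$ through a point of $B\cap C$ lying on the geodesic.
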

\begin{proof}
  Fix $b \in B$ and $c \in C$. If $d(b,B\cap C) = \infty$ or $d(c,B\cap C) = \infty$, then trivially $|f(b)-f(c)| \leq d(b,c) = \infty$, so assume that $d(b,B\cap C) < \infty$ and $d(c,B\cap C) < \infty$. Let $D = B\cap C$. We now have that
  \begin{align*}
    |f(b)-f(c)| &\leq |f(b)-f(\pi_{D}(b))| + |f(\pi_{D}(b)) - f(\pi_{D}(c))| + |f(\pi_{D}(c))-f(c)| \\
                & \leq d(b,\pi_{D}(b)) + d(\pi_{D}(b),\pi_{D}(c)) + d(\pi_{D}(c),c) = d(b,c).
  \end{align*}
  Therefore $f$ is $1$-Lipschitz on all of $A$.
\end{proof}

\begin{prop}\label{prop:RFB-simple}
  Let $B \indast_A C$ be the ternary relation $\langle AB \rangle \cap \langle AC \rangle = \langle A \rangle$. $\indast$ is a simple independence relation in $\RFR^\ast$ and so $\RFR^\ast$ is simple and $B \ind_A C$ if and only if $\langle AB \rangle\cap \langle AC \rangle = \langle A \rangle$.
\end{prop}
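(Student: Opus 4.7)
The plan is to verify the eight axioms of Fact \ref{fact:CAT-simple} for $\indast$. Invariance is immediate from the fact that $\langle \cdot \rangle$ is automorphism-invariant, symmetry is built into the defining equation, existence $B \indast_M M$ reduces to $\langle M \rangle \subseteq \langle MB \rangle$, and finite character follows from Lemma \ref{lem:closure-closure} together with monotonicity of $\langle \cdot \rangle$. For restricted transitivity with $M \preceq N$, I would first note that $B \indast_M N$ forces $\langle NB \rangle$ to be the free amalgamation $\langle MB \rangle \cup \langle N \rangle$ over $\langle M \rangle$ as $\Rb$-forests, and then chase intersections on both sides of the equivalence. Extension is a direct application of the amalgamation property (Proposition \ref{prop:RFB-JEP-AP}): amalgamate $\langle AB \rangle$ with $\langle AC' \rangle$ freely over $\langle A \rangle$, then embed the result into the monster by t.e.c.-ness, quoting the quantifier-elimination characterization of types in Corollary \ref{cor:RFB-model-companion}.

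For local character I would take $\lambda = \aleph_0$. Given a set $B$ and an increasing chain of models $(M_i)_{i < \mu}$ with $\cf(\mu) > \aleph_0 + |B|$, the union $N = \bigcup_{i<\mu} M_i$ is already complete, since any countable Cauchy sequence is eventually in a single $M_i$; hence $N = \langle N \rangle$ by Lemma \ref{lem:closure-closure}. For each $b \in B$ with $d(b, N) < \infty$ the projection $\pi_N(b)$ exists by Proposition \ref{prop:projection} and lies in some $M_{i(b)}$, and for each pair $b, b' \in B$ with $d(b, b') < \infty$ the sub-arc $[b, b'] \cap N$ (convex in $[b, b']$ since each $M_i$ is an $\Rb$-forest) has at most two endpoints, each in some $M_{i(b, b')}$. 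Taking $j < \mu$ above all these indices (possible since $\cf(\mu) > \aleph_0 + |B|$), a case analysis on the generating arcs $[u, v]$ with $u, v \in M_j \cup B$ shows that every element of $\langle M_j B \rangle \cap N$ lies in $M_j$, i.e.\ $B \indast_{M_j} N$.

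The main obstacle is the independence theorem. Given $M \models \RFR^\ast$ and $B_0, B_1, C_0, C_1$ as hypothesized, I would construct an abstract $\Lc_{\RFR}$-structure with underlying set $\langle MB_0 \rangle \cup \langle MB_1 \rangle \cup \langle MC \rangle$, where $\langle MC \rangle$ is a fresh copy of $\langle MC_0 \rangle$ identified with $\langle MC_1 \rangle$ via the isomorphism furnished by $C_0 \equiv_M C_1$. The $\Rb$-forest structure is forced: $\langle MB_0 \rangle, \langle MB_1 \rangle, \langle MC \rangle$ are glued as a three-way free amalgamation over $\langle M \rangle$, using the given independences to control all three pairwise intersections. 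Every pair of points in the amalgam lies in at least one of the substructures $\langle MB_0 B_1 \rangle$, $\langle MCB_0 \rangle$, $\langle MCB_1 \rangle$, on which $R$ is already prescribed; the prescriptions agree on their pairwise intersections $\langle MB_0 \rangle, \langle MB_1 \rangle, \langle MC \rangle$ by the matching hypotheses.

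The hard step is verifying that the patched $R$ is globally \oo-Lipschitz. For a typical cross pair $(x, z)$ with $x \in \langle MB_0 \rangle$ tested against $z \in \langle MB_1 \rangle$, and an auxiliary point $y \in \langle MC \rangle$, the Lipschitz inequality is verified by telescoping through $\pi_{\langle M \rangle}(x), \pi_{\langle M \rangle}(z) \in \langle M \rangle$: $R(x, y)$ is compared to $R(\pi_{\langle M \rangle}(x), y)$ inside $\langle MCB_0 \rangle$, then to $R(\pi_{\langle M \rangle}(z), y)$ inside $\langle MC \rangle$, and finally to $R(z, y)$ inside $\langle MCB_1 \rangle$, with the three Lipschitz bounds summing to $d(x, \pi_{\langle M \rangle}(x)) + d(\pi_{\langle M \rangle}(x), \pi_{\langle M \rangle}(z)) + d(\pi_{\langle M \rangle}(z), z) = d(x, z)$ by the additive decomposition of distance in the free amalgamation. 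This telescoping, together with Lemma \ref{lem:Lip-check} to patch globally, is precisely the reason for the $\ell^1$-style \oo-Lipschitz condition rather than max-Lipschitzness. With $R$ globally \oo-Lipschitz, the structure is a model of $\RFR$ and embeds into the monster by t.e.c.-ness; the image of $C$ is the required realization. Simplicity of $\RFR^\ast$ and the identification of $\indast$ with non-dividing then follow from Fact \ref{fact:CAT-simple}.
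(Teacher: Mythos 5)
Your proof is correct, and the overall strategy---verify the eight axioms of Fact~\ref{fact:CAT-simple} for the relation $\indast$ defined by $\langle AB\rangle\cap\langle AC\rangle=\langle A\rangle$---is the same as the paper's. The main difference is in the first seven axioms: the paper observes that by Proposition~\ref{prop:RFB-reduct} the pure-metric reduct of $\RFR^\ast$ is the \emph{stable} theory $\RF^\ast$, and that by Proposition~\ref{prop:RF-ind} your relation $\indast$ is exactly non-dividing in that reduct, so invariance, existence, finite and local character, symmetry, and restricted transitivity come for free from stability theory applied to $\RF^\ast$ (and extension needs only the small additional use of amalgamation to upgrade a metric conjugate of $B$ to an $\Lc_{\RFR}$-conjugate). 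You instead re-derive each of those axioms from scratch with direct $\Rb$-forest geometry; this is more work but sound, and your local-character argument (completeness of a chain union of uncountable cofinality, then choosing $j$ above the finitely many indices where projections and arc-endpoints appear) is a reasonable direct version of what stability gives automatically. For the independence theorem---which both treatments identify as the only substantive step---your construction is essentially the paper's: build the three-way free amalgamation $\langle MB_0\rangle\cup\langle MB_1\rangle\cup\langle MC\rangle$, note that $R$ is prescribed consistently on the three two-way pieces, and verify \oo-Lipschitzness. The paper packages the Lipschitz check as a single invocation of Lemma~\ref{lem:Lip-check} with the cover $\{\langle MB_0C\rangle,\langle MB_1C\rangle\}$ intersecting in $\langle MC\rangle$; your telescoping through $\pi_{\langle M\rangle}(x)$ and $\pi_{\langle M\rangle}(z)$ is the same computation unpacked (and indeed is essentially the proof of Lemma~\ref{lem:Lip-check} rather than a citation of it). Both approaches make the same crucial use of the additive distance decomposition in the free amalgam, which is exactly why the $\ell^1$-style \oo-Lipschitz modulus is the right choice, as you correctly emphasize. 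The chief benefit of the paper's route is brevity and a cleaner conceptual picture (simplicity of $\RFR^\ast$ is ``stability of the reduct plus one amalgamation lemma''), while yours makes the argument self-contained at the cost of re-proving facts that stability theory already supplies.
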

\begin{proof}
  By \cref{prop:RFB-reduct}, the pure metric reduct of $\RFR^\ast$ is $\RF^\ast$. By \cref{prop:RF-stable}, $\RF^\ast$ is stable, and by \cref{prop:RF-ind}, $\indast$ is the relation of non-dividing in $\RF^\ast$. This immediately implies that $\indast$ satisfies all of the properties in \cref{fact:CAT-simple} except for the independence theorem over models.

  So fix a model $M$ and sets $B_0$, $B_1$, $C_0$, and $C_1$ such that $B_0 \indast_M B_1$, $C_i \indast_M B_i$ for both $i<2$, and $C_0 \equiv_M C_1$. Let $C$ be an $\Rb$-forest extending $\langle MB_0B_1 \rangle$ in the same way that $C_0$ and $C_1$ do. (This exists by stationarity of non-dividing in the stable reduct.) There is clearly a well-defined function $Q: \langle MB_0B_1C \rangle^2 \to [0,1]$ defined in such a way that $(\langle MB_iC \rangle,Q,MB_iC)$ is isomorphic as a pointed $\Lc_{\RFR}$\nobreakdash-\hspace{0pt}structure to $(\langle MB_iC_i \rangle,R)$ for both $i<2$. Therefore the only thing we need to check is that $Q$ is \oo-Lipschitz. By \cref{lem:Lip-check}, the functions $x\mapsto Q(x,b)$ and $x\mapsto Q(b,x)$ are $1$-Lipschitz for any $b \in \langle MB_0B_1C \rangle$ (since they are $1$-Lipschitz on $\langle MB_0C \rangle$ and $\langle MB_1C \rangle$). Therefore $Q$ is \oo-Lipschitz and we can assume that $Q$ is $R$ for some model of $\RFR$ extending $\langle MB_0B_1 \rangle$.

  Therefore $\indast$ is a simple independence relation and $\RFR^\ast$ is simple.
\end{proof}

\subsection{\texorpdfstring{$\RFR^\ast$ has no non-algebraic crisp imaginary types}{RFR* has no non-algebraic crisp imaginary types}}
\label{sec:no-crispy}

We will now use \cref{cor:connected-test} to show that $\RFR^\ast$ has no non-algebraic crisp imaginary types. For any type $p(x) \in S_1(A)$, let $F_p$ be $\{q(x,y) \in S_2(A): q(x,y) \supseteq p(x)\cup p(y)\}$.

\begin{lem}\label{lem:unzip}
  For any $A$, any non-algebraic $1$-type $p(x) \in S_1(A)$, and any $q(x,y) \in F_p$, there is a type $r(x,y) \in F_p$ and a continuous path $f:[0,1] \to F_p$ such that $f(0) = q$ and $f(1)=r$ and for any $bc \models r$, $b \ind_A c$.
\end{lem}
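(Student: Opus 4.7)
The plan is to realize $q$ by a pair $bc$ in the monster and continuously ``unzip'' the shared portion of the $b$- and $c$-spurs toward $\langle A \rangle$. First I would handle two trivial cases: if $b \ind_A c$ already, take $r := q$ and $f$ constant. If $d(b,A) = \infty$, then $d(c,A) = \infty$ too since $c \models p$, so $\langle Ab \rangle = \langle A \rangle \sqcup \{b\}$ and similarly for $c$, giving $\langle Ab \rangle \cap \langle Ac \rangle = \langle A \rangle$ automatically; again $r := q$ works. So I may assume $\alpha := d(b,A) < \infty$ and $b \nind_A c$. Since $b,c \models p$, their projections coincide: $w_0 := \pi_{\langle A \rangle}(b) = \pi_{\langle A \rangle}(c) \in \langle A \rangle$ (any $A$-automorphism fixes $\langle A \rangle$ pointwise by \cref{prop:closure-isometry-type}). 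Let $m$ be the median of $w_0,b,c$ in the $\Rb$-tree $\langle Abc \rangle \cap [b]_{<\infty}$, and let $\beta := d(w_0,m) > 0$.

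For each $t \in [0,1]$, I construct an $\Lc_{\RFR}$-structure $N_t$ extending $\langle A \rangle$ by attaching a ``Y'' at $w_0$ whose stem $[w_0, m_t]$ has length $(1-t)\beta$ and whose two arms $[m_t, b_t], [m_t, c_t]$ each have length $\alpha - (1-t)\beta$. The key move is to transport $R$ from $\langle Abc \rangle$ to $N_t$ via the unique map $g_t : N_t \to \langle Abc \rangle$ which is the identity on $\langle A \rangle$ and sends each of the length-$\alpha$ arcs $[w_0, b_t], [w_0, c_t]$ isometrically onto $[w_0, b], [w_0, c]$ respectively (well-defined on the shared stem since both arcs begin with $[w_0, m_t]$). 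A short case analysis on where pairs lie in the Y-shape shows that $g_t$ is $1$-Lipschitz and surjective. Defining $R_t(x,y) := R(g_t(x), g_t(y))$ then yields a $[0,1]$-valued \oo-Lipschitz predicate on $N_t$ extending $R|_{\langle A \rangle}$, so $N_t \models \RFR$ and embeds into the monster over $\langle A \rangle$ by \cref{prop:RFB-JEP-AP} and saturation. Set $f(t) := \tp(b_t c_t / A) \in S_2(A)$.

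By \cref{cor:RFB-model-companion}, it suffices to check three things. First, at $t=0$, $g_0$ is an isomorphism of pointed $\Lc_{\RFR}$-structures $(N_0, b_0, c_0) \to (\langle Abc \rangle, b, c)$, so $f(0) = q$. Second, at $t=1$ the arcs $[w_0, b_1]$ and $[w_0, c_1]$ meet only at $w_0 \in \langle A \rangle$, so $\langle Ab_1 \rangle \cap \langle Ac_1 \rangle = \langle A \rangle$ and thus $b_1 \ind_A c_1$ by \cref{prop:RFB-simple}; set $r := f(1)$. Third, by quantifier elimination continuity of $f$ reduces to continuity in $t$ of each atomic $\Lc_{\RFR}$-formula over $A$ evaluated at $(b_t, c_t)$. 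The distances $d(b_t, a) = \alpha + d(w_0, a)$ are constant, $d(b_t, c_t) = 2(\alpha - (1-t)\beta)$ is continuous, and every $R_t$-value involving $b_t, c_t$ and $A$ equals the corresponding $R$-value at $b, c, A$ since $g_t(b_t) = b$, $g_t(c_t) = c$, and $g_t|_{A}$ is the identity --- hence independent of $t$.

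The main anticipated obstacle was the construction of $R_t$: a direct inf-convolution extension from specified boundary values generically distorts $R$ on $\langle Abc \rangle$, so $f(0)$ would no longer equal $q$. The pullback trick $R_t := R \circ (g_t \times g_t)$ sidesteps this entirely --- \oo-Lipschitzness follows from $1$-Lipschitzness of $g_t$, the correct boundary type is built in because $g_t$ fixes $A$ and sends $b_t, c_t$ to $b, c$, and continuity in $t$ becomes essentially trivial because these evaluations do not depend on $t$.
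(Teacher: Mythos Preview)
Your construction is essentially the paper's: the paper parametrizes by the stem length $r\in[0,d(a,e)]$ and declares $R(\interp{a}{s}{b_r},\interp{a}{t}{c_r})=R(\interp{a}{s}{b},\interp{a}{t}{c})$, which is exactly your pullback $R_t=R\circ(g_t\times g_t)$ written out in coordinates. Framing it via a single $1$-Lipschitz map $g_t$ is a nice packaging, and your verification that $N_t\models\RFR$ and that the endpoints are as required is fine.

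The one genuine gap is in your continuity argument. The theory $\RFR^\ast$ does \emph{not} have quantifier elimination in the sense you invoke: the quantifier-free type of a tuple $\bar a$ records only $d(a_i,a_j)$ and $R(a_i,a_j)$, but by \cref{cor:RFB-model-companion} the full type is determined by the pointed $\Lc_{\RFR}$-isomorphism class of $(\langle\bar a\rangle,\bar a)$, which also depends on the values of $R$ at intermediate points of $\langle\bar a\rangle$. (Concretely, two pairs $(b,c)$ and $(b',c')$ over $A$ can agree on all atomic formulas yet differ at $R(\interp{w_0}{1/2}{b},c)$.) So checking only atomic formulas at $b_t,c_t$ and elements of $A$ is not enough to conclude continuity of $t\mapsto\tp(b_tc_t/A)$.

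The fix is already implicit in your construction: because $g_t(\interp{w_0}{s}{b_t})=\interp{w_0}{s}{b}$ and likewise for $c_t$, every value $R_t(\interp{w_0}{s}{b_t},\interp{w_0}{s'}{c_t})$ is independent of $t$, and the only data that moves is the branch point (equivalently $d(b_t,c_t)$), which varies continuously. Combined with the type characterization of \cref{cor:RFB-model-companion} (applied over $A$), this suffices; alternatively one can run the paper's ultraproduct argument. Either way, replace the appeal to ``quantifier elimination'' with the observation that the entire pointed $\Lc_{\RFR}$-structure $(\langle Ab_tc_t\rangle,Ab_tc_t)$ varies continuously in $t$, not just the atomic diagram of $(b_t,c_t)$.
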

\begin{proof}
  Since $\langle A \rangle\subseteq \dcl(A)$, we may assume that $A = \langle A \rangle$. By \cref{prop:RF-JEP-AP} and \cref{cor:RFB-model-companion}, we know that given $b$, $\tp(b/A)$ is uniquely determined by the pointed isomorphism type of $(\langle Ab \rangle,Ab)$. Fix $b$ and $c$ not in $A$ such that $b \equiv_A c$. Since $b \equiv_A c$, we either have that $d(A,b) = d(A,c) = \infty$ or $d(A,b)=d(A,c) < \infty$ and $\pi_A(b) = \pi_A(c)$. In the first case, however, we have that $\langle Ab \rangle\cap \langle Ac \rangle = \langle A \rangle$, so $b \ind_A c$ and we can take $f$ to be a constant path. In the second case, if $\langle Ab \rangle\cap \langle Ac \rangle = \langle A \rangle$, then again $b \ind_A c$ and so we can again take $f$ to be a constant path. Otherwise, there is a unique $e \in \langle Ab \rangle\cap \langle Ac \rangle$  such that $\langle Ab \rangle\cap \langle Ac \rangle = \langle Ae \rangle$. Let $a = \pi_A(b) = \pi_A(c)$.

  For each $r \in [0,d(a,e))$, build $b_r$ and $c_r$ realizing $\tp(b/A)=\tp(c/A)$ and $e_r$ satisfying that
  $d(a,e_r) = r$, 
  $\langle Ab_r \rangle\cap \langle Ac_r \rangle = \langle A e_r \rangle$, and 
  for any $s \in [0,1]$ and $t \in [0,1]$, $R(\interp{a}{s}{b_r},\interp{a}{t}{c_r}) = R(\interp{a}{s}{b},\interp{a}{t}{c})$. 
  Note that the above completely specifies a completion of $\tp(b/A)\cup\tp(c/A)$. It is also straightforward to check that the described behavior of $R$ is \oo-Lipschitz. Furthermore, we have that $b_0 \ind_A c_0$, since $\langle Ab_0 \rangle\cap \langle Ac_0 \rangle = \langle Ae_0 \rangle = \langle A \rangle$.

  So now we just need to verify that the function $f(r) = \tp(b_rc_r/A)$ is continuous (where $b_{d(a,e)}c_{d(a,e)}=bc$). If $(r_i)_{i< \omega}$ is a sequence of elements of $[0,1]$ and $\Uc$ is an ultrafilter on $\omega$, then it is fairly immediate that in an ultrapower of any model containing $A$ and $\{b_r,c_r:r\in [0,1]\}$, a $2$-tuple $b'c'$ corresponding to the family $(b_{r_i}c_{r_i})_{i < \omega}$ will have the property that $(\langle Ab'c' \rangle,b'c')$ is isomorphic to $(\langle Ab_sc_s \rangle,Ab_sc_s)$ as a pointed $\Lc_{\RFR}$-structure, where $s = \lim_{i \to \Uc} r_i$. By \cref{cor:RFB-model-companion}, this implies that $b'c'\equiv_A b_sc_s$. Since we can do this for any $s \in [0,1]$, we have that $f$ is continuous.
\end{proof}

For the following lemma, recall that any convex combination of $1$-Lipschitz functions is $1$-Lipschitz. This implies that any convex combination of \oo-Lipschitz functions is \oo-Lipschitz as well.

\begin{lem}\label{lem:interpolate}
  Fix $p(x) \in S_x(A)$. Fix $q_0(x,y),q_1(x,y) \in F_p$ such that for both $i<2$, if $bc \models q_i$, then $b \ind_A c$. There is a continuous function $g:[0,1] \to F_p$ such that $g(0) = q_0$ and $g(1) = q_1$.
\end{lem}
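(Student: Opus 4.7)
My plan is to construct $g$ by linearly interpolating the $R$-predicates that distinguish $q_0$ and $q_1$, after reducing to a situation where these are the only data that differ.

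First, fix realizations $b_i c_i \models q_i$ for $i<2$. By \cref{prop:RFB-reduct}, \cref{prop:RF-stable}, and \cref{prop:RF-ind}, the pure metric reduct of $\RFR^\ast$ is stable with independence given by $\langle AB \rangle \cap \langle AC \rangle = \langle A \rangle$; stationarity in the reduct then forces $\tp(b_0c_0/A) = \tp(b_1c_1/A)$ in the metric language, since $b_i,c_i \models p$ and $b_i \ind_A c_i$. By \cref{prop:closure-isometry-type}, there is a pointed $\Rb$-forest isomorphism $\phi : \langle A b_0 c_0 \rangle \to \langle A b_1 c_1 \rangle$ fixing $A$ pointwise and sending $(b_0,c_0) \mapsto (b_1,c_1)$. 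Moreover any $A$-automorphism of the monster sending $b_0 \mapsto b_1$ also gives such a pointed isometry $\langle Ab_0 \rangle \to \langle Ab_1 \rangle$, so by the uniqueness clause of \cref{prop:closure-isometry-type} it must agree with $\phi$ on $\langle Ab_0 \rangle$; hence $\phi$ restricted to $\langle Ab_0 \rangle$ preserves $R$, and similarly for $c$. Pulling back through $\phi$, I obtain a single $\Rb$-forest $X := \langle Abc \rangle = \langle Ab \rangle \cup \langle Ac \rangle$ (equality by $b \ind_A c$) carrying two \oo-Lipschitz $[0,1]$-valued predicates $R_0, R_1 : X^2 \to [0,1]$ that agree on $\langle Ab \rangle^2 \cup \langle Ac \rangle^2$ and differ only on cross pairs.

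Next, for $t \in [0,1]$ set $R_t := (1-t)R_0 + tR_1$. This is $[0,1]$-valued, and as a convex combination of functions that are $1$-Lipschitz in each variable, $R_t$ is itself $1$-Lipschitz in each variable, hence \oo-Lipschitz. Thus $(X, R_t)$ is a model of $\RFR$ and by \cref{cor:RFB-model-companion} embeds in a model of $\RFR^\ast$; let $g(t)$ be the type of $(b,c)$ over $A$ there. Because $R_t$ restricts on $\langle Ab \rangle$ and $\langle Ac \rangle$ to the common structure of $R_0$ and $R_1$, we have $b,c \models p$, so $g(t) \in F_p$, with $g(0) = q_0$ and $g(1) = q_1$.

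For continuity, suppose $t_n \to s$ in $[0,1]$. Work in a monster $\mathbb{M} \models \RFR^\ast$ with realizations $(b_n, c_n) \models g(t_n)$, fix a nonprincipal ultrafilter $\Uc$ on $\omega$, and let $(b',c') := (b_n, c_n)_{n<\omega}/\Uc \in \mathbb{M}^\Uc$. By \cref{prop:RFB-tec-ult}, $\mathbb{M}^\Uc \models \RFR^\ast$. All the pointed $\Rb$-forests $(\langle A b_n c_n \rangle, A, b_n, c_n)$ coincide with $(X, A, b, c)$, so by \L o\'s's theorem the ultraproduct $(\langle Ab'c' \rangle, A, b', c')$ has the same underlying pointed $\Rb$-forest; and on any cross pair the $R$-value is the ultralimit of $R_{t_n}$-values, which by linearity of $t \mapsto R_t$ equals $R_s$ at that pair. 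Therefore $(\langle Ab'c' \rangle, A, b', c') \cong (X, R_s, A, b, c)$ as pointed $\Lc_{\RFR}$-structures, and \cref{cor:RFB-model-companion} gives $\tp(b'c'/A) = g(s)$; hence $g(t_n) \to g(s)$. The main delicate step is the reduction via stationarity and uniqueness of isometries to a single $\Rb$-forest carrying two compatible $R$-predicates; once that is done, \oo-Lipschitzness is trivially preserved under convex combination and the ultraproduct argument gives continuity cleanly.
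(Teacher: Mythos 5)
Your proof is correct and takes essentially the same approach as the paper: linearly interpolate the $R$-predicate between the two types while keeping the underlying pointed $\Rb$-forest fixed, using that convex combinations of \oo-Lipschitz functions are \oo-Lipschitz, and then verify continuity of $g$ by an ultraproduct argument. You make explicit a reduction step (via stationarity of independence in the pure metric reduct and the uniqueness clause of \cref{prop:closure-isometry-type}) that the paper's proof leaves implicit and instead handles with a case split on whether $d(A,b_0)$ is finite, but the underlying idea is the same.
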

\begin{proof}
  Find $b_0c_0 \models q_0$ and $b_1c_1 \models q_1$. If $d(A,b_0) = \infty$, then for each $r \in (0,1)$, find $b_r$ and $c_r$ both realizing $p$ such that $R(b_r,c_r) = (1-r)R(b_0,c_0)+rR(b_1,c_1)$ and $R(c_r,b_r) = (1-r)R(c_0,b_0)+rR(c_1,b_1)$. If $d(A,b_0) < \infty$, then let $a = \pi_{\langle A \rangle}(b_0) = \pi_{\langle A \rangle}(c_0) = \pi_{\langle A \rangle}(b_1) = \pi_{\langle A \rangle}(c_1)$. For each $r \in (0,1)$, find $b_r$ and $c_r$ both realizing $p$ such that for any $s,t \in [0,1]$,
  \[
    R(\interp{a}{s}{b_r},\interp{a}{t}{c_r}) = (1-r)R(\interp{a}{s}{b_0},\interp{a}{t}{c_0})+rR(\interp{a}{s}{b_1},\interp{a}{t}{c_0}). 
  \]
  Since convex combinations of \oo-Lipschitz functions are \oo-Lipschitz, this specifies a complete type in $F_p$ for each $r \in (0,1)$.

  In both cases we can use the same argument as in \cref{lem:unzip} to show that the function $g(r) = \tp(b_rc_r/A)$ is continuous.
\end{proof}

We don't need this but one thing to note about Lemmas~\ref{lem:unzip} and \ref{lem:interpolate} is that they do not actually require that $b$ and $c$ realize the same type over $A$.

\begin{thm}\label{thm:no-crispy}
  $\RFR^\ast$ is a simple, unstable continuous theory with no non-algebraic crisp imaginary types.
\end{thm}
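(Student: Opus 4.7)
The plan is to assemble the three ingredients already in place. Simplicity of $\RFR^\ast$ is Proposition \ref{prop:RFB-simple} and instability is Proposition \ref{prop:RFB-unstable}, so the only remaining assertion is that $\RFR^\ast$ has no non-algebraic crisp imaginary types. I will verify this by checking the hypothesis of Corollary \ref{cor:connected-test}, namely that for every $M \models \RFR^\ast$ with $\dc M \leq |T|$ and every $1$-type $p(x) \in S_1(M)$, the set $F_p$ is connected. (Since the proof of Corollary \ref{cor:connected-test} only invokes the connectedness of $F_p$ for a non-algebraic completion of the relevant crisp type, it is enough to handle non-algebraic $p$; for algebraic $p$ there is nothing to do.)

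For non-algebraic $p$, I will in fact show that $F_p$ is path-connected, which implies connectedness. Given $q_0, q_1 \in F_p$, I construct a continuous path between them in three stages. First, apply Lemma \ref{lem:unzip} to $q_0$ to obtain a continuous path $f_0 \colon [0,1] \to F_p$ with $f_0(0) = q_0$ and $f_0(1) = r_0$, where any realization $bc \models r_0$ satisfies $b \ind_M c$. Applying the same lemma to $q_1$ yields a continuous path $f_1 \colon [0,1] \to F_p$ from $q_1$ to some $r_1 \in F_p$ again with independent coordinates. Next, since both $r_0$ and $r_1$ satisfy the independence hypothesis of Lemma \ref{lem:interpolate}, that lemma produces a continuous path $g \colon [0,1] \to F_p$ from $r_0$ to $r_1$. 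Concatenating $f_0$, then $g$, then the reverse of $f_1$, I obtain a continuous path in $F_p$ from $q_0$ to $q_1$.

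Hence $F_p$ is path-connected and in particular connected, so by Corollary \ref{cor:connected-test} the theory $\RFR^\ast$ has no non-algebraic crisp imaginary types. Combined with Propositions \ref{prop:RFB-simple} and \ref{prop:RFB-unstable}, this yields the theorem. There is no real obstacle at this step: all the genuine work has been absorbed into the supporting lemmas—\ref{lem:unzip} for ``untangling'' a pair of realizations into an independent pair along a continuous family, and \ref{lem:interpolate} for linearly interpolating the values of $R$ between two independent configurations while staying within the \oo-Lipschitz constraint—and the present theorem is essentially a routine assembly of these facts.
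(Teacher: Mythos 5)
Your proof is correct and takes essentially the same approach as the paper's: combine Propositions~\ref{prop:RFB-simple} and~\ref{prop:RFB-unstable} with the observation that Lemmas~\ref{lem:unzip} and~\ref{lem:interpolate} give path-connectedness of $F_p$, then invoke Corollary~\ref{cor:connected-test}. Your explicit note that only non-algebraic $p$ matters (since Lemma~\ref{lem:unzip} requires non-algebraicity, while the proof of Corollary~\ref{cor:connected-test} only uses $F_p$ for a non-algebraic completion) is a small but worthwhile tightening of the paper's slightly more casual claim that $F_p$ is path-connected for any $1$-type.
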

\begin{proof}
  $\RFR^\ast$ is unstable by \cref{prop:RFB-unstable} and simple by \cref{prop:RFB-simple}. By Lemmas~\ref{lem:unzip} and \ref{thm:no-crispy}, $F_p$ is path connected for any $1$-type $p(x) \in S_1(A)$ for any set of parameters $A$. Therefore by \cref{cor:connected-test}, $\RFR^\ast$ has no non-algebraic crisp imaginary types.
\end{proof}

Given the discussion in the introduction of \cite{Ealy2012}, it is natural to wonder whether adding a generic automorphism to $\RF^\ast$ results in a strictly simple theory.

\begin{quest}
  Does the theory of models of $\RF^\ast$ with an automorphism have a model companion? If it does, is this theory strictly simple?
\end{quest}

\section{A stronger notion}
\label{sec:quests}

There is a sense in which models of $\RF^\ast$ are `less essentially continuous' than, say, Hilbert spaces or the Urysohn sphere. In particular, given any point $a$ in a model of $\RF^\ast$ and $r \in (0,\infty)$, the set $\{x : d(x,a) = r\}$ is definable and is an ultrametric space with a dense distance set. While as we have shown, the induced structure on this ultrametric space does not interpret an infinite discrete structure, there is still an abundance of type-definable and co-type-definable equivalence relations whose equivalence classes are uniformly metrically separated (specifically, $d(x,y) \leq r$ and $d(x,y)<r$ for any $r$).  Similarly, there is also the discrete equivalence relation given by $d(x,y) < \infty$ in any $\Rb$-forest. In some sense, rather than having too few uniformly metrically discrete equivalence relations to interpret an infinite discrete structure, $\RFR^\ast$ has too many. By contrast, non-compact ultrametric structures with nowhere dense distance sets\footnote{The \emph{distance set of $T$} is the set of distances $d(a,b)$ of pairs of elements of models of $T$.} interpret infinite discrete structures (see \cite[Sec.~5.2.3]{CatCon}).

A salient feature differentiating the $\Rb$-arboreal theories (i.e., $\Rb$-trees and $\Rb$-forests) from Hilbert spaces, atomless probability algebras, and the Urysohn sphere is connectivity. In particular, the theories of infinite dimensional Hilbert spaces and atomless probability algebras enjoy the following property:
\begin{itemize}
\item[$(\star)$] For any small partial type $\Sigma(\xbar)$ (in finitely many variables), the metric space of realizations of $\Sigma(\xbar)$ in the monster has a bounded number of connected components.
\end{itemize}
One thing to note is that $(\star)$ rules out any theory (with non-compact models) whose models are all locally compact, including the `inherently non-discrete' strongly minimal theories of \cite[Sec.~5.1]{CatCon}.

Regardless, $(\star)$ seems reasonably natural but also harder to characterize than the property of having no non-algebraic crisp imaginary types. We can say a little about it, though.

\begin{prop}\label{prop:dag-equiv}
  Let $T$ be a theory with monster model $\Ob$. The following are equivalent.
    \begin{enumerate}
    \item $T$ has $(\star)$.
    \item For every model $M$ and every complete type $p(\xbar) \in S(M)$, $p(\Ob)$ has a bounded number of connected components.
    \item For every model $M$ and every complete type $p(\xbar) \in S(M)$, $p(\Ob)$ has a single connected component.
    \end{enumerate}
  Furthermore, if $T$ has $(\star)$, then it has no non-algebraic crisp imaginary types.
\end{prop}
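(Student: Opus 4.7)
The overall approach is the cycle $(1) \Rightarrow (2) \Rightarrow (3) \Rightarrow (1)$, followed by deducing the furthermore clause from $(3)$.

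The implication $(1) \Rightarrow (2)$ is immediate by taking $\Sigma = p$. For $(3) \Rightarrow (1)$, I would, given a small partial type $\Sigma(\xbar)$ over a parameter set $A$, choose a model $M \supseteq A$ with $|M| \leq |T| + |A|$ and write $\Sigma(\Ob) = \bigcup \{ p(\Ob) : p \in S_{\xbar}(M),\ p \supseteq \Sigma \}$. By (3) each $p(\Ob)$ is connected, so $\Sigma(\Ob)$ is covered by at most $|S_{\xbar}(M)| \leq 2^{|T|+|A|}$ connected pieces and therefore has only boundedly many components.

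The main obstacle is $(2) \Rightarrow (3)$. Assume for contradiction that some $p(\Ob)$ has two distinct components $C$ and $C'$, and pick $a \in C$, $b \in C'$ both realizing $p$. The relation ``same connected component of $p(\Ob)$'' is an $M$-invariant equivalence relation, and $\Aut(\Ob/M)$ acts transitively on its classes because it acts transitively on $p(\Ob)$. My plan is to show the number of classes must in fact be unbounded, contradicting (2). The key observation is that every realization of the $2$-type $r(x,y) := \tp(ab/M)$ has its coordinates in distinct components, since $\Aut(\Ob/M)$ permutes the components. I would then attempt, via an Erd\H{o}s--Rado / Ramsey extraction, to produce an $M$-indiscernible sequence $(a_i)_{i<\kappa}$ of realizations of $p$ with $\tp(a_i a_j / M) = r$ for all $i<j$; such a sequence would place the $a_i$'s in pairwise distinct components, giving $\kappa$ components and contradicting (2) for any $\kappa$ larger than the bound. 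The delicate step is the consistency of the partial type describing such a sequence, which amounts to showing that ``being in distinct components'' is witnessed by a type-definable condition on pairs; equivalently, one can try to argue that the bounded $M$-invariant equivalence relation ``same component'' on $p(\Ob)$ must collapse to the trivial relation, using that $p$ is a complete type over a model and that the transitive $\Aut(\Ob/M)$-action has no proper bounded-index stabilizer in this setting.

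The furthermore clause follows directly from (3) together with the characterization of non-algebraic crisp imaginary types in \cref{thm:crisp-imaginary-char-1-types}(3). Suppose $T$ has such a type; fix a model $M$ with $|M| \leq |T|$, a $1$-type $p(x) \in S_x(M)$, and a formula $E(x,y)$ over $M$ defining a $\{0,1\}$-valued equivalence relation on $p(\Ob)$ with more than one class. Fix any $a \models p$; then $E(a,a)=0$, so $\{b \in p(\Ob) : E(a,b)=0\}$ is nonempty, and the additional class provides some $b$ with $E(a,b)=1$, so $\{b \in p(\Ob) : E(a,b)=1\}$ is also nonempty. Both sets are clopen in $p(\Ob)$ by continuity and $\{0,1\}$-valuedness of $E$, so they disconnect $p(\Ob)$, contradicting (3).
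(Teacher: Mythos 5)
Your handling of $1\Rightarrow 2$, $3\Rightarrow 1$, and the furthermore clause is correct and essentially matches the paper (the paper proves $\neg 1 \Rightarrow \neg 3$ by pigeonhole, which is the contrapositive of your covering argument for $3 \Rightarrow 1$; for the last clause, the paper notes directly that $\neg E$-pairs lie in distinct components, while you argue via the clopen partition of $p(\Ob)$ — both are fine).

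The gap is in $2 \Rightarrow 3$, and it is genuine. You correctly identify the delicate point but neither of the two routes you sketch will close it. The Erd\H{o}s--Rado/Ramsey extraction gives an $M$-indiscernible sequence in $p(\Ob)$, but there is no control over its Ehrenfeucht--Mostowski type: it need not extend $r = \tp(ab/M)$, so the extracted sequence could collapse into a single component. Starting the extraction from a long sequence all of whose pairs realize $r$ would beg the question, since the existence of such a sequence is exactly what is at issue, and ``in distinct components'' is not a type-definable condition in general. The second route — that a bounded $M$-invariant equivalence relation on a complete type over a model must be trivial — is simply false: Lascar and Kim--Pillay strong types give nontrivial bounded $M$-invariant equivalence relations refining types over models.

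The ingredient you are missing is a coheir/Morley-sequence argument. Let $E$ be ``same connected component of $p(\Ob)$''; it is $\Aut(\Ob/M)$-invariant. Fix $a,b \models p$ with $\neg E(a,b)$ and let $q$ be a global $M$-coheir (or any global $M$-invariant extension) of $p$. Take $c \models q\restriction Mab$. By transitivity, $c$ cannot be $E$-equivalent to both $a$ and $b$. The key observation is that, because $q$ and $E$ are both $M$-invariant and $a \equiv_M b$, one in fact has $E(c,a) \Leftrightarrow E(c,b)$: take $\sigma \in \Aut(\Ob/M)$ with $\sigma(a)=b$; then $\sigma(c)\models q\restriction Mb$, so $\sigma(c) \equiv_{Mb} c$, and chasing $M$-invariance of $E$ through $\sigma$ and a further automorphism over $Mb$ gives the equivalence. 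Hence $\neg E(c,a)$ and $\neg E(c,b)$. Iterating this over a Morley sequence of $q$ over $Mab$ gives, by induction, arbitrarily long sequences of pairwise $E$-inequivalent realizations of $p$, so $p(\Ob)$ has an unbounded number of components, contradicting $(2)$. This is how the paper closes the middle implication, and it is the step your sketch leaves open.
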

\begin{proof}
  1 $\To$ 2 is obvious.

  $\neg$3 $\To$ $\neg$2. Suppose that for some model $M$ and some complete type $p(\xbar) \in S(M)$, there are realizations $a$ and $b$ of $p$ such that $a$ and $b$ are in different connected components of $p(\Ob)$. Let $c$ realize some $M$-coheir $q(\xbar)$ of $p(\xbar)$ over $Mab$. Since being in the same connected component of $p(\Ob)$ is an invariant equivalence relation, we must have that either $c$ is not in the same connected component as $a$ or that $c$ is not in the connected component as $b$. By invariance, this implies that $c$ is not in the connected components of either $a$ or $b$. Now any Morley sequence $(c_i)_{i<\kappa}$ in $q(\xbar)$ will have the property that for any $i<j<\kappa$, $c_i$ and $c_j$ are not in the same connected component of $p(\Ob)$, implying that $p(\Ob)$ has an unbounded number of connected components.

  $\neg$1 $\To$ $\neg$3. Suppose that there is a small partial type $\Sigma(\xbar)$ such that $\Sigma(\Ob)$ has a large number of connected components. Fix a model $M$ containing all of the parameters of $\Sigma(\xbar)$. Let $(a_i)_{i<(2^{|M|+|T|})^+}$ be a collection of elements of $\Ob$ in pairwise distinct connected components of $\Sigma(\Ob)$. By the pigeonhole principle, there is a completion $p(\xbar) \supseteq \Sigma(\xbar)$ realized by $a_i$ and $a_j$ for $i<j < (2^{|M|+|T|})^+$.

  For the final statement, note that if $T$ has a non-algebraic crisp imaginary type, then by \cref{thm:crisp-imaginary-char-1-types}, there is a complete type $p(x)$ and a $\{0,1\}$-valued definable equivalence relation $E(x,y)$ on $p(\Ob)$ with a large number of $E$-classes. Any $a,b \models p$ with $\neg E(a,b)$ must be in distinct connected components of $p(\Ob)$, so $T$ has $(\star)$.
\end{proof}

One thing to note is that the first part of \cref{prop:dag-equiv} doesn't use anything special about the relation `$a$ and $b$ are in the same connected component of $p(\Ob)$' other than the fact that it is an invariant equivalence relation on $p(\Ob)$. By comparison to \cref{thm:crisp-imaginary-char-1-types}, a reasonable test question for the robustness of $(\star)$ would be this:

\begin{quest}
  Is the failure of $(\star)$ always witnessed by $1$-types?
\end{quest}

And given the results of \cref{sec:gen-bin-R-tree} and the fact that $(\star)$ separates Hilbert spaces, atomless probability algebras, and the Urysohn sphere\footnote{$L^p$-Banach lattices probably have $(\star)$ too, but we have not checked this.} from the $\Rb$-arboreal theories, the next question is natural.

\begin{quest}\label{quest:star-simple}
  Is there a strictly simple theory with $(\star)$?
\end{quest}

Characterizing $(\star)$ more precisely might require an analysis of the topological properties of sets of types of the form $\{q(\xbar,\ybar) \in S_{\xbar\ybar}(M) : q(\xbar,\ybar) \supseteq p(\xbar)\cup p(\ybar),~(\forall ab\models q)a~\text{and}~b~\text{are in the same connected component of}~p(\Ob)\}$, which seems difficult. By \cite{Debs2020}, the complexity of the class of connected closed subset of a Polish space can be as high as $\boldsymbol{\Pi}^1_2$, but even worse it was established in \cite{con-not-sep-con} that connectivity between points in a complete metric space is not always witnessed by separable subspaces. It's not even clear that $(\star)$ is set-theoretically absolute. Perhaps this will be the kind of ugliness that is ironed out by considering the class of all models of a theory, but perhaps not.

\begin{quest}
  Is $(\star)$ set-theoretically absolute? What is the topological complexity of the set of complete theories satisfying $(\star)$?
\end{quest}

One can write down a number of reasonable conditions intermediate between $(\star)$ and having no non-algebraic crisp imaginary types---for instance, we could consider path-connected components or quasi-components instead of connected components or we could think about $\e$-chainability as a uniform proxy for connectivity---but given that there are no strongly motivating examples separating these intermediate notions, it seems premature to commit them to print.

\bibliographystyle{plain}
\bibliography{../ref}

\end{document}